\newcolumntype{C}[1]{>{\centering\arraybackslash$}p{#1}<{$}}
\numberwithin{equation}{section}
\Crefname{algocf}{Algorithm}{Algorithms}
\renewcommand{\@algocf@capt@boxed}{above}
\newtheorem{theorem}{Theorem}[section]
\newtheorem{proposition}[theorem]{Proposition}
\newtheorem{lemma}[theorem]{Lemma}
\newtheorem{corollary}[theorem]{Corollary}
\newtheorem{definition}[theorem]{Definition}
\theoremstyle{definition}
\newtheorem{example}[theorem]{Example}
\newtheorem{remark}[theorem]{Remark}
\newtheorem{algo}[theorem]{Algorithm}
\DeclareMathOperator{\conv}{conv}
\DeclareMathOperator{\cone}{cone}
\newcommand{\RR}{\mathbb{R}}
\newcommand{\RRpos}{\RR_+}
\newcommand{\pcone}[2]{\operatorname{cone}^{(#2)}(#1)}
\newcommand{\set}[2]{\left\{ #1 \;|\; #2 \right\}}
\newcommand{\bigset}[2]{\left\{ #1 \;\big|\; #2 \right\}}
\newcommand{\multiset}[2]{\left\{\!\!\left\{ #1 \;|\; #2 \right\}\!\!\right\}}
\newcommand{\multibracket}[1]{\left\{\!\!\left\{ #1 \right\}\!\!\right\}}
\newcommand{\sref}{\mathcal{S}}
\newcommand{\tref}{\mathcal{R}}
\newcommand{\wo}{{w_\circ}}
\newcommand{\one}{{e}}
\newcommand{\emptyword}{{\varepsilon}}
\newcommand{\csys}{{(W,\sref)}}
\newcommand{\Cplus}[2]{{\mathcal{C}^+(#1,#2)}}
\newcommand{\Cminus}[2]{{\mathcal{C}^-(#1,#2)}}
\newcommand{\Eplus}[2]{{\mathcal{E}^+(#1,#2)}}
\newcommand{\Eminus}[2]{{\mathcal{E}^-(#1,#2)}}
\newcommand{\Phiplus}{\Phi^+}
\newcommand{\Phiminus}{\Phi^-}
\newcommand{\precweak}{\prec_\mathsf{R}}
\newcommand{\leqweak}{\leq_\mathsf{R}}
\newcommand{\inv}{\operatorname{Inv}}
\newcommand{\weakint}[2]{[#1,#2]_{\mathsf{R}}}
\newcommand{\idweak}[1]{{\operatorname{Id}_{\mathsf{R}}(#1)}}
\newcommand{\precbru}{\prec}
\newcommand{\leqbru}{\leq}
\newcommand{\subwordComplex}{\mathcal{SC}}
\newcommand{\greedy}{I_{\operatorname{g}}}
\newcommand{\antigreedy}{I_{\operatorname{ag}}}
\newcommand{\demazure}[1]{{\operatorname{Dem}}(#1)}
\newcommand{\brickVector}[1]{{{\sf b}({#1})}}
\newcommand{\brickPolytope}{\mathcal{B}}
\newcommand{\Root}[2]{{\sfr}(#1,#2)}
\newcommand{\Roots}[1]{{\sfR}(#1)}
\newcommand{\RootsPos}[2]{{\operatorname{Pos}_{#2}(#1)}}
\newcommand{\Weight}[2]{{\sfw}(#1,#2)}
\newcommand{\ie}{\textit{i.e.}}
\newcommand{\Dfn}[1]{\emph{\bfseries #1}}
\newcommand{\sq}[1]{{\rm #1}}
\newcommand{\Q}{\sq{Q}}
\newcommand{\s}{\sq{s}}
\newcommand{\w}{\sq{w}}
\newcommand{\sfr}{{\sf r}}
\newcommand{\sfR}{{\sf R}}
\newcommand{\sfw}{{\sf w}}
\newcommand{\mi}[1]{{\overline{#1}}}
\newcommand{\del}[1]{{{#1}_\Delta}}
\newcommand{\delmin}[1]{{\overline{#1}_\Delta}}
\definecolor{darkblue}{rgb}{0,0,0.7}
\definecolor{lightblue}{rgb}{0.68,0.85,1}
\definecolor{lightgrey}{rgb}{0.9,0.9,0.9}
\definecolor{grey}{rgb}{0.5,0.5,0.5}
\newcommand{\markedbox}[1]{{\bf{\color{red} #1}}}
\newcommand{\normalfan}[1]{{\mathcal{N}(#1)}}
\newcommand{\normalcone}[1]{{\operatorname{C}^{\diamond}(#1)}}
\newcommand{\bignormalcone}[1]{{\operatorname{C}^{\diamond}\big(#1\big)}}
\newcommand{\coxeterfan}[1]{{\mathcal{CF}_{#1}}}
\newcommand{\wordprod}[2]{\Pi{#1}_{#2}} %
\title[Bruhat intervals, subword complexes and brick polyhedra]{Bruhat intervals, subword complexes and brick polyhedra for finite Coxeter groups}
\author[D.~Jahn]{Dennis Jahn}
\address[D.~Jahn]{Fakultät für Mathematik, Ruhr-Universität Bochum, Germany}
\email{dennis.jahn@rub.de}
\author[C.~Stump]{Christian Stump}
\address[C.~Stump]{Fakultät für Mathematik, Ruhr-Universität Bochum, Germany}
\email{christian.stump@rub.de}
\thanks{
  The authors are supported by the DFG Heisenberg grant STU 563/4-1 ``Noncrossing phenomena in Algebra and Geometry''.
}
\begin{document}

\begin{abstract}
  We study the interplay between the discrete geometry of Bruhat poset intervals and subword complexes of finite Coxeter systems.
  We establish connections between the cones generated by cover labels for Bruhat intervals and of root configurations for subword complexes, culminating in the notion of brick polyhedra for general subword complexes.
\end{abstract}

\maketitle
\setcounter{tocdepth}{2}
\tableofcontents

\section{Introduction}
\label{sec:intro}

This paper develops the relationship between Bruhat intervals in finite Coxeter groups and subword complexes by describing the interplay between Bruhat interval cones and root configurations.
Based on this newly developed relationship, properties of brick polytopes for spherical root-independent subword complexes are extended towards general subword complexes by introducing \emph{brick polyhedra}.

\medskip

Bruhat interval cones were introduced and studied by Dyer in the study of positivity properties of Kazhdan-Lusztig and Stanley polynomials~\cite{Dyer-1994}.
Subword complexes are simplicial complexes that have been introduced by Knutson and Miller in the context of Gröbner geometry of Schubert varieties~\cite{KM2004, KM2005}.
Based on the notion of root configurations from~\cite{CLS-14}, this paper reconsiders the combinatorial and discrete-geometric understanding of subword complexes by closely tightening them to Dyer's Bruhat interval cones.

\medskip

Brick polytopes for spherical root-independent subword complexes were introduced and studied by Pilaud and Stump~\cite{Pilaud-Stump-2015}.
These generalized brick polytopes for sorting networks by Pilaud and Santos~\cite{PS-2012}.
As shown by Ceballos, Labbé and Stump in~\cite{CLS-14}, cluster complexes of finite type cluster algebras can be realized as spherical root-independent subword complexes and one main motivation for studying brick polytopes in~\cite{Pilaud-Stump-2015} was to show that generalized associahedra for cluster algebras previously constructed by Chapoton, Fomin and Zelevinsky~\cite{CFZ2002} and by Hohlweg, Lange and Thomas~\cite{HLT2008} can be realized as brick polytopes.

\medskip

Motivated by the conjecture that important properties of brick polytopes for spherical root-independent subword complexes hold for all spherical subword complexes~\cite[Conjecture~7.1]{Pilaud-Stump-2015}, this paper introduces and studies brick polyhedra for general (spherical and non-spherical) subword complexes of finite type.
It turns out that subtle properties of Bruhat interval cones are at the core of their understanding.

\medskip

Subword complexes have a canonical recursive decomposition into smaller subword complexes.
Previous considerations in~\cite{CLS-14,Pilaud-Stump-2015} were mainly developed for spherical subword complexes which are not closed und this recursive structure.
The presented constructions still rely on elementary properties of the two fundamental notions \emph{root} and \emph{weight functions} for subword complexes.
Otherwise, all constructions are based on newly developed properties of Bruhat interval cones that are then applied to general subword complexes.

\medskip

This newly presented approach allows inductive arguments using the recursive structure as done in the proof of \Cref{prop:nonflipablerootincone}.
This is then used to deduce the uniqueness properties of certain facets constructed in \Cref{algo:uniquefacet} that in turn are the central ingredient in the structural understanding of the construction of brick polyhedra.
After recalling standard notions and properties of finite type Coxeter groups, their root systems and their subword complexes in \Cref{sec:background}, we start the discussion of the interplay between Bruhat intervals and subword complexes in \Cref{sec:bruhatcoversandsubwordcomplexes}.
The main results of this interplay are the following, we refer below for proper definitions.
Let $(W,\sref)$ be a finite Coxeter system and let $\subwordComplex(\Q,w)$ be a subword complex associated to a word~$\Q$ in the simple reflections~$\sref$ with Demazure product  $\demazure{\Q}$ and an element~$w \in W$ in the Coxeter group.
\begin{itemize}
  \item \Cref{thm:cone_equality} yields that the Bruhat interval cone of the Bruhat interval $[w,\demazure{\Q}]$ equals the intersection of all cones over root configurations of $\subwordComplex(\Q,w)$.
  \item \Cref{cor:bruhatcone} and \Cref{lem:extendingcovercones,,cor:weakordercplus} exhibits containment properties of Bruhat interval cones that are fundamental for all further considerations.
  \item \Cref{prop:bruhatcoversandrootfunction} shows that the Bruhat interval cone describes the non-flippable vertices in facets.
  \item \Cref{algo:uniquefacet} provides an algorithm to compute $f$-antigreedy facets for linear functionals~$f$ that are non-negative on the Bruhat interval cone.
  \item Based on this algorithm, \Cref{prop:connectedcomp} proves~\cite[Conjeture~7.1]{Pilaud-Stump-2015} for general subword complexes.
\end{itemize}
\Cref{sec:brickpolytopes} then introduces and studies brick polyhedra of subword complexes.
\begin{itemize}
  \item \Cref{def:brickpolytope} defines the brick polyhedron of a finite type subword complex.
  \item \Cref{cor:vertexpointedcone} shows that the local cone of the brick polyhedron at a brick vector coincides with the cone over the root configuration.
  \item \Cref{thm:brickpolynormalcone} shows how to glue together chambers in the Coxeter fan to obtain the normal fan of the brick polyhedron.
  \item \Cref{thm:brickpolycontainment} shows that all brick polyhedra for a fixed word are contained in each other in a natural way.
\end{itemize}

\section{Background on Coxeter systems and subword complexes}
\label{sec:background}

This section recalls standard notions for finite Coxeter systems.
These are mostly following the notions from~\cite{CLS-14,Pilaud-Stump-2015}.
We also refer to~\cite{BB-2005} for further background material.
Ongoing examples are collected in \Cref{sec:examples1}.

\bigskip

Let $\csys$ be a finite type Coxeter system of rank~$n = |\sref|$ acting on a Euclidean vector space~$V \cong \RR^n$ with inner form $\langle \cdot, \cdot \rangle$.
Let $\Delta \subseteq \Phiplus \subseteq \Phi \subseteq V$ be a root system for $\csys$ with simple roots $\Delta = \set{ \alpha_s }{ s \in \sref }$, positive roots~$\Phiplus$, and negative roots $\Phiminus = -\Phiplus$.
For $\beta\in \Phi$ we write $|\beta| \in \Phiplus$ for the positive root in $\{ \pm\beta\}$.
The reflections in~$W$ are $\tref = \set{s_\beta}{\beta \in \Phiplus}$ where $s_\beta$ denotes the reflection sending~$\beta$ to its negative while fixing pointwise its orthogonal complement~$\beta^\perp = \set{v \in V}{\langle \beta, v \rangle = 0}$.

The corresponding Cartan matrix $(a_{st})_{s,t \in \sref}$ is given by $s(\alpha_t) = \alpha_t - a_{st} \alpha_s$.
The fundamental weights $\nabla = \set{ \omega_s }{ s \in \sref } \subseteq V$ are then $\alpha_s = \sum_{t \in \sref} a_{ts} \omega_t$ and~$W$ acts on the fundamental weights by $s(\omega_t) = \omega_t - \delta_{s=t} \alpha_s$ for $s,t \in \sref$.

\subsection{Reduced words and weak order}

The \Dfn{length}~$\ell(w)$ of an element~$w \in W$ is the smallest length of a word~$\s_1 \dots \s_{\ell(w)}$ such that its product is an expression $w = s_1\cdots s_{\ell(w)}$ for~$w$.
Words (and also expressions) of smallest length are called \Dfn{reduced}.
Here and below, we mildly distinguish between words in $\sref$ and the corresponding expressions for elements in~$W$ by writing $\s_1 \dots \s_\ell \in \sref^*$ for the word and $s_1\cdots s_\ell \in W$ for the expression.
It is well-known that for $w \in W, s \in \sref$ we have
\begin{equation}
  \ell(ws) =  \begin{cases}
                \ell(w) + 1 &\text{ if } w(\alpha_s) \in \Phiplus \\
                \ell(w) - 1 &\text{ if } w(\alpha_s) \in \Phiminus
              \end{cases}\ . \label{eq:weaklength}
\end{equation}
This in particular means that a word $\s_1 \dots \s_\ell$ is reduced if and only if for all $1 < i \leq \ell$ we have $s_1\cdots s_{i-1}(\alpha_{s_i}) \in \Phiplus$.
The \Dfn{(right) weak order} $\leqweak$ on~$W$ is the partial order defined by the cover relations $w \precweak ws$ for $w \in W, s \in \sref$ with $\ell(ws) = \ell(w)+1$.
The \Dfn{inversion set} of an element $w \in W$ with reduced expression $w = s_1 \cdots s_\ell$ is given by
\begin{equation}
  \inv(w) = \bigset{s_1\cdots s_{k-1}(\alpha_{s_k})}{1 \leq k \leq \ell}. \label{eq:inversionset}
\end{equation}
This is independent of the chosen reduced word and it is not hard to see that $\inv(w) = \Phiplus \, \cap \, w(\Phiminus)$.
In particular, $\ell(w) = |\inv(w)|$ and $x \leqweak y \Leftrightarrow \inv(x) \subseteq \inv(y)$.
The \Dfn{longest element} $\wo \in W$ is the unique element of longest length $|\Phiplus|$ which is equivalently described as the unique element with $\inv(\wo) = \Phiplus$.
For any $w \in W$, we have a decomposition
\begin{equation}
\label{eq:phiplusdecomposition}
  \Phiplus = \inv(w) \sqcup \inv(w\cdot\wo),
\end{equation}
and a subset $X \subseteq \Phiplus$ is the inversion set of an element if and only if~$X$ and $\Phiplus \setminus X$ are separated by a hyperplane, \ie, there is a linear functional~$f : V \rightarrow \RR$ such that for all $\beta \in \Phiplus$, we have
\begin{equation}
\label{eq:separatedsets}
  f(\beta) > 0 \Longleftrightarrow \beta \in X.
\end{equation}

We record the following well-known lemma.
\begin{lemma}[{\cite[Lemma 1.4.4]{BB-2005}}]
  \label{lem:techlem}
  Let $w \in W$ with reduced expression $w = s_1 \cdots s_\ell$ and let $s_\beta \in \tref$.
  Then the following properties are equivalent:
  \begin{enumerate}
    \item $\ell(s_\beta w) < \ell(w)$,
    \item $s_\beta w = s_1 \cdots \widehat s_k \cdots s_\ell$ for some index~$k$,
    \item\label{it:betaisinversion} $\beta = s_1 \cdots s_{k-1}(\alpha_{s_k})$ for some index~$k$.
  \end{enumerate}
  Moreover, the index~$k$ is unique in both cases.
\end{lemma}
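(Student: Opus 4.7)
The plan is to prove the circle (3) $\Rightarrow$ (2) $\Rightarrow$ (1) directly, close it with (1) $\Rightarrow$ (3) via a duality argument through the longest element, and then read off uniqueness of $k$ from the inversion set description in \eqref{eq:inversionset}.

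For (3) $\Rightarrow$ (2), I would invoke the standard identity that for any $u \in W$ and $s \in \sref$, the reflection along $u(\alpha_s)$ equals the conjugate $u s u^{-1}$. Applied to $u = s_1 \cdots s_{k-1}$, this gives $s_\beta = (s_1\cdots s_{k-1})\, s_k\, (s_1\cdots s_{k-1})^{-1}$, and multiplying by $w = s_1\cdots s_\ell$ on the right telescopes to $s_\beta w = s_1 \cdots \widehat{s_k} \cdots s_\ell$. The step (2) $\Rightarrow$ (1) is then immediate: the expression from (2) has length $\ell - 1$, so $\ell(s_\beta w) \leq \ell - 1 < \ell(w)$.

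For (1) $\Rightarrow$ (3), I would argue by contrapositive. By \eqref{eq:inversionset}, (3) is equivalent to $\beta \in \inv(w)$; assume instead $\beta \in \Phiplus \setminus \inv(w)$, which by \eqref{eq:phiplusdecomposition} places $\beta \in \inv(w\wo)$. Applying \eqref{eq:inversionset} to any reduced expression of $w\wo$ and chaining through the already proved (3) $\Rightarrow$ (1) for $w\wo$ in place of $w$ yields $\ell(s_\beta w \wo) < \ell(w\wo)$. Combined with the identity $\ell(u) + \ell(u\wo) = \ell(\wo)$ for all $u \in W$, which is itself a direct consequence of \eqref{eq:phiplusdecomposition} via $\ell(u)=|\inv(u)|$, this upgrades to $\ell(s_\beta w) > \ell(w)$, contradicting (1).

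Uniqueness of $k$ in (3) follows immediately from \eqref{eq:inversionset}, since $k \mapsto s_1\cdots s_{k-1}(\alpha_{s_k})$ maps $\{1,\ldots,\ell\}$ onto $\inv(w)$, a set of size $\ell$, and hence is a bijection. Uniqueness in (2) transfers by inverting the calculation of (3) $\Rightarrow$ (2): from $s_\beta w = s_1\cdots\widehat{s_k}\cdots s_\ell$ one recovers $s_\beta = (s_1\cdots s_{k-1})s_k(s_1\cdots s_{k-1})^{-1}$, which identifies $\beta$ with the unique positive root in $\{\pm\, s_1\cdots s_{k-1}(\alpha_{s_k})\}$, so two indices in (2) would yield two indices in (3). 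The delicate step is (1) $\Rightarrow$ (3): the excerpt only records the length-change rule \eqref{eq:weaklength} for simple reflections, so the equivalence $\ell(s_\beta w) < \ell(w) \Leftrightarrow \beta \in \inv(w)$ is not directly available, and the detour through $\wo$ and \eqref{eq:phiplusdecomposition} is what bridges this gap using only the machinery already set up in the excerpt.
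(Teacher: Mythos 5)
The paper does not prove this lemma; it cites it verbatim as \cite[Lemma 1.4.4]{BB-2005}, so there is no in-text argument for yours to be measured against. Your proof is correct and self-contained modulo the standard identity $s_{u(\alpha_s)} = u s u^{-1}$, which you invoke in (3) $\Rightarrow$ (2) and again when inverting that computation for uniqueness in (2); this identity is not recorded in the paper's background section and deserves a sentence, but it is elementary. The interesting structural difference is in (1) $\Rightarrow$ (3): the textbook proof in Björner--Brenti runs through the Strong Exchange Condition and holds for arbitrary Coxeter systems, whereas your detour through $\wo$ — using \eqref{eq:phiplusdecomposition} and the consequence $\ell(u) + \ell(u\wo) = \ell(\wo)$ of $\ell(u) = |\inv(u)|$ — is genuinely different and is specific to finite type. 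In the paper's finite setting this is a fair trade: you get a short argument from exactly the facts already laid out around \eqref{eq:inversionset}--\eqref{eq:phiplusdecomposition} without importing Exchange, at the cost of losing generality the paper does not need. The uniqueness argument, reading (3) off the bijection $k \mapsto s_1\cdots s_{k-1}(\alpha_{s_k})$ between $\{1,\dots,\ell\}$ and $\inv(w)$ (valid because $|\inv(w)| = \ell(w)$) and then transferring it to (2), is clean and closes the lemma.
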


Since~\ref{it:betaisinversion} is the defining property for $\beta$ to be an inversion of~$w$, we obtain from this lemma that
\begin{equation}
\label{eq:inversionandbruhat}
  \inv(w) = \bigset{ \beta \in \Phiplus}{\ell(s_\beta w) < \ell(w) }.
\end{equation}

\subsection{Bruhat order}

The \Dfn{Bruhat order} $\leqbru$ on~$W$ is the partial order defined by the cover relations $w \prec rw$ for $w \in W, r \in \tref$ with $\ell(rw) = \ell(w)+1$.
Observe that $w \precweak ws \Rightarrow w \precbru ws$ because $ws = (wsw^{-1})w$ for $wsw^{-1} \in \tref$.
For the positive root $\beta \in \Phiplus$ with $r = s_\beta$, we say that the cover relation $w \prec rw$ is \Dfn{labelled by}~$\beta$.
In particular, the cover $w \prec ws$ is labelled by $w(\alpha_s)$ which is a positive root by~\eqref{eq:weaklength}.
A \Dfn{Bruhat interval} $[x,y]$ is defined for $x \leq y$ in Bruhat order by
\[
  [x,y] = \bigset{ z \in W}{x \leq z \leq y}.
\]
The following properties of the Bruhat order are needed in subsequent sections.

\begin{lemma}[{\cite[Proposition 2.2.7]{BB-2005}}]
\label{lem:liftingproperty}
  Let $x,y \in W$ and $s \in \sref$ such that $x \leq y$, $x \prec sx$ and $sy \prec y$.
  Then $x \leq sy$ and $sx \leq y$.
\end{lemma}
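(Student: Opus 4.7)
The plan is to deduce the lifting property from the \emph{subword property} of the Bruhat order, which states that $u \leqbru v$ if and only if every (equivalently, some) reduced expression for~$v$ contains a reduced subword expressing~$u$. This property is standard and available in the reference~\cite{BB-2005}. The key technical observation is that the hypothesis $sy \precbru y$ means that $s$ is a left descent of~$y$, so $y$ admits a reduced expression beginning with~$s$.

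First, I would choose a reduced word $\y = \s_1 \cdots \s_{\ell-1}$ for $sy$. Since $\ell(y) = \ell(sy)+1$, the word $\s \y = \s \s_1 \cdots \s_{\ell-1}$ is a reduced expression for~$y$. By the subword property applied to $x \leqbru y$, there exists a reduced subword of $\s \y$ expressing~$x$. I would now split into two cases depending on whether this subword uses the initial letter~$\s$.

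If the reduced subword uses the initial~$\s$, it has the form $\s \sq{u}$ for some reduced subword $\sq{u}$ of~$\y$, and the product of $\sq{u}$ equals $sx$ with $\ell(sx) = \ell(x)-1$. This contradicts the hypothesis $x \precweak sx$ (equivalently $\ell(sx) = \ell(x)+1$, using~\eqref{eq:weaklength}). Hence the subword necessarily lies entirely inside~$\y$, which gives a reduced subword of~$\y$ expressing~$x$; the subword property then yields $x \leqbru sy$. This establishes the first conclusion.

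For the second conclusion, take the reduced subword $\sq{x}$ of~$\y$ expressing~$x$ produced above. Prepending~$\s$ gives a word $\s \sq{x}$ of length $\ell(x)+1 = \ell(sx)$ whose product is~$sx$; it is therefore automatically reduced, and it is by construction a subword of the reduced expression $\s \y$ for~$y$. A final application of the subword property yields $sx \leqbru y$. The main subtlety in this argument is simply the clean case analysis in the first step: recognising that the descent condition $sx > x$ rules out the case in which the initial~$\s$ is selected is what forces the subword to live inside~$\y$, from which both conclusions follow symmetrically.
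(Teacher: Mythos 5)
Your proof is correct. The paper itself does not prove this lemma but cites it directly as \cite[Proposition~2.2.7]{BB-2005}; the argument you give via the subword characterization of Bruhat order is essentially the standard textbook proof of the lifting property, and every step checks out. One small notational slip: in the case analysis you write that the contradiction is with ``the hypothesis $x \precweak sx$,'' but the hypothesis in the lemma is the Bruhat cover $x \precbru sx$ (the paper's $\precweak$ is reserved for \emph{right} weak order, i.e.\ $w \precweak ws$, which is not what appears here); what you actually use, correctly, is the length condition $\ell(sx) = \ell(x)+1$, so the mathematics is unaffected.
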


This lemma immediately yields the following known lifting property for Bruhat intervals.

\begin{corollary}
\label{cor:liftingproperty}
  Let $x,y \in W$ and $s \in \sref$ such that $x \leq y$, $x \prec sx$, $sy \prec y$ and $sx \not\leq sy$.
  Then $[x,sy] \cong [sx,y]$ and the isomorphism is given by $w \mapsto sw$.
\end{corollary}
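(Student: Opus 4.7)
The plan is to prove that the map $\phi \colon w \mapsto sw$ is a well-defined order isomorphism between $[x,sy]$ and $[sx,y]$. Since $s$ is an involution, $\phi$ is its own inverse, so it suffices to show that $\phi$ maps each of the two intervals into the other and that it is order-preserving in both directions.

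The key step, on which everything hinges, is to show that the assumption $sx \not\leq sy$ forces a uniform descent behaviour on the two intervals: every $w \in [x,sy]$ satisfies $w \prec sw$, while every $u \in [sx,y]$ satisfies $su \prec u$. I would establish the first claim by contradiction. If $sw \prec w$ for some $w \in [x,sy]$, then \Cref{lem:liftingproperty} applied to the pair $(x,w)$—which satisfies $x \leq w$, $x \prec sx$, and $sw \prec w$—yields $sx \leq w$. Combined with $w \leq sy$ this gives $sx \leq sy$, contradicting the hypothesis. The second claim is symmetric: if $u \prec su$ for some $u \in [sx,y]$, then \Cref{lem:liftingproperty} applied to $(u,y)$ yields $u \leq sy$, which combined with $sx \leq u$ again forces $sx \leq sy$.

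Once these descent-direction facts are in hand, well-definedness becomes a routine double application of the lifting lemma. For $w \in [x,sy]$, applying \Cref{lem:liftingproperty} to the pair $(w,y)$ gives $sw \leq y$, and applying it to the pair $(x,sw)$—where the hypothesis $s(sw) = w \prec sw$ comes from the key step—gives $sx \leq sw$, so $sw \in [sx,y]$. The reverse inclusion $\phi \colon [sx,y] \to [x,sy]$ is entirely analogous, using that $su \prec u$ throughout $[sx,y]$: the lemma applied to $(su,y)$ gives $su \leq sy$, and applied to $(x,u)$ gives $x \leq su$.

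For order-preservation in the forward direction, take $w_1 \leq w_2$ in $[x,sy]$. The key step gives $w_1 \prec sw_1$ and $w_2 \prec sw_2$, and the chain $w_1 \leq w_2 \prec sw_2$ shows $w_1 \leq sw_2$; hence \Cref{lem:liftingproperty} applied to $(w_1, sw_2)$ with $w_1 \prec sw_1$ and $s(sw_2) = w_2 \prec sw_2$ yields $sw_1 \leq sw_2$. The reverse direction is symmetric, applying the lemma to $(su_1, u_2)$ for $u_1 \leq u_2$ in $[sx,y]$. The main obstacle is really just finding the contradiction argument that pins down the uniform descent behaviour; once that is settled, everything else is bookkeeping with \Cref{lem:liftingproperty}.
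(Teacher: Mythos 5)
Your proof is correct, and it spells out exactly the argument the paper has in mind when it says the corollary ``immediately'' follows from \Cref{lem:liftingproperty}: first pin down the uniform descent direction on each interval (your ``key step''), then apply the lifting lemma repeatedly to get well-definedness and order-preservation of $w \mapsto sw$ in both directions.

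One small remark: the well-definedness paragraph is subsumed by the order-preservation paragraph, since $sx \leq sw$ and $sw \leq y$ are just the instances $w_1 = x, w_2 = w$ and $w_1 = w, w_2 = sy$ of the statement that $\phi$ preserves $\leq$. You could therefore shorten the argument to: (i) the key step, (ii) observing that $\phi$ is an involution interchanging the two sets, and (iii) the order-preservation computation, which already shows $\phi$ lands in the target interval by applying it to the endpoint inequalities $x \leq w \leq sy$. This is only a stylistic compression; the logic as written is sound.
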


\begin{lemma}[{\cite[Corollary 2.2.8]{BB-2005}}]
  \label{lem:bruhatdiamond}
  Let $w \in W$, $s \in \sref$ and $r \in \tref$ such that $s \neq r$ and $w \prec sw, rw$.
  Then $sw, rw \prec srw$.
\end{lemma}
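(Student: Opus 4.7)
The plan is to identify the length of $srw$ and then read off both covers. Since $s$ is a simple reflection, multiplication by $s$ on the left changes length by exactly $\pm 1$, so $\ell(srw) \in \{\ell(rw)+1, \ell(rw)-1\} = \{\ell(w)+2, \ell(w)\}$. The whole statement reduces to excluding the case $\ell(srw) = \ell(w)$; once $\ell(srw) = \ell(w)+2$ is established, both cover relations fall out directly.

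To rule out $\ell(srw) = \ell(w)$, I would invoke the lifting property (\Cref{lem:liftingproperty}) with $x = w$ and $y = rw$. The hypotheses $w \leq rw$ (from $w \prec rw$) and $w \prec sw$ are immediate, and the assumption $\ell(srw) = \ell(w) = \ell(rw)-1$ is precisely $sy \prec y$. The lemma then delivers $sx \leq y$, i.e.\ $sw \leq rw$. Since both sides have length $\ell(w)+1$, this forces $sw = rw$, hence $s = r$ — contradicting the hypothesis $s \neq r$. Therefore $\ell(srw) = \ell(w)+2$.

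Given this length computation, the cover $rw \prec srw$ is the weak-order cover obtained by left-multiplying $rw$ by the simple reflection $s$, which is automatically a Bruhat cover as noted after the definition of Bruhat order. For the cover $sw \prec srw$, I would write $srw = (srs)(sw)$; the element $srs$ lies in $\tref$ because $\tref$ is closed under conjugation, and $srs \neq e$ since $s \neq r$. Combined with $\ell(srw) = \ell(sw)+1$, this exhibits $sw \prec srw$ as a Bruhat cover.

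The only real obstacle is the elimination step in the second paragraph; the rest is bookkeeping about length parities and the definition of Bruhat covers. This mirrors the standard argument in Björner–Brenti and uses no machinery beyond the lifting property already recorded as \Cref{lem:liftingproperty}.
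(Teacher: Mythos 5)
The paper does not prove this lemma; it cites it verbatim as \cite[Corollary 2.2.8]{BB-2005}. Your argument is correct and is precisely the standard derivation: rule out $\ell(srw)=\ell(w)$ via the lifting property (\Cref{lem:liftingproperty}) with $x=w$, $y=rw$, which under the contrary assumption forces $sw\leq rw$ and hence $sw=rw$ by the equal-length comparison, contradicting $s\neq r$; then $\ell(srw)=\ell(w)+2$ and both covers $rw\prec srw$ and $sw=(srs)^{-1}(srw)\prec srw$ follow from the definition of Bruhat covers since $s\in\sref\subseteq\tref$ and $srs\in\tref$. One tiny imprecision: you describe $rw\prec srw$ as a \emph{weak}-order cover obtained by left multiplication, whereas the paper defines only the \emph{right} weak order; but this does not matter, since the cover is a Bruhat cover directly by the paper's definition of Bruhat order via left multiplication by reflections.
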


Putting these properties together, we obtain the following proposition.

\begin{proposition}
  \label{lem:extendeddiamond}
  Let $x,y \in W$, $s \in \sref$ and $r \in \tref$ such that $s \neq r$ and $x \prec sx, rx \leq y$.
  If moreover $srx \not\leq y$ then $y \prec sy$, $srx \leq sy$ and $[rx,y] \cong [srx,sy]$.
\end{proposition}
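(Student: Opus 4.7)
The plan is to stitch together the three preceding statements — Lemma~\ref{lem:techlem} (implicit via the cover structure), Lemma~\ref{lem:bruhatdiamond}, Lemma~\ref{lem:liftingproperty} and Corollary~\ref{cor:liftingproperty} — with the hypothesis $srx \not\leq y$ serving as the driver of each step. The three conclusions $y \prec sy$, $srx \leq sy$, and $[rx,y] \cong [srx,sy]$ should fall out in this order once the Bruhat diamond at $x$ has been opened up.

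First, I would apply Lemma~\ref{lem:bruhatdiamond} to $x$ with the simple reflection $s$ and the reflection $r$: since $s \neq r$ and $x \prec sx,rx$, this yields the covers $sx,rx \prec srx$, and in particular the cover $rx \prec srx$ that will be used throughout. Next, to establish $y \prec sy$, I would argue by contradiction: if instead $sy \prec y$, then applying the lifting property (Lemma~\ref{lem:liftingproperty}) to the pair $(rx,y)$ with the cover $rx \prec srx$ and the assumed descent $sy \prec y$ forces $srx \leq y$, contradicting the hypothesis. With $y \prec sy$ established, I would then apply Lemma~\ref{lem:liftingproperty} a second time — now to the pair $(rx,sy)$, using $rx \leq y \prec sy$, the cover $rx \prec srx$, and the descent $s(sy) = y \prec sy$ — to obtain $srx \leq sy$. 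Finally, the interval isomorphism follows directly from Corollary~\ref{cor:liftingproperty} applied with its $x = rx$ and $sy = y$ (so its $y$ is our $sy$): the four hypotheses $rx \leq sy$, $rx \prec srx$, $y \prec sy$, and $srx \not\leq y$ have all just been verified, and the conclusion is $[rx,y] \cong [srx,sy]$ via $w \mapsto sw$.

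The only genuinely fiddly point is bookkeeping the direction in which the lifting property is being applied and keeping straight that the hypothesis $srx \not\leq y$ is the exact negation needed both to rule out $sy \prec y$ in the contradiction step and to feed into Corollary~\ref{cor:liftingproperty} at the end. Once those matchings are set up correctly, no further combinatorics of $W$ is needed.
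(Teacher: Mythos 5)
Your proof is correct and matches the paper's argument step for step: apply Lemma~\ref{lem:bruhatdiamond} to get $rx \prec srx$, rule out $sy \prec y$ by contradiction via Lemma~\ref{lem:liftingproperty}, apply Lemma~\ref{lem:liftingproperty} a second time to get $srx \leq sy$, and finish with Corollary~\ref{cor:liftingproperty}. (The passing mention of Lemma~\ref{lem:techlem} is not actually needed — the other three statements suffice, as your argument itself shows.)
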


\begin{proof}
  \Cref{lem:bruhatdiamond} yields $x \prec sx, rx \prec srx$.
  Assuming $sy \prec y$ and applying \Cref{lem:liftingproperty} to $rx \prec srx$ and to $sy \prec y$ would imply $srx \leq y$, a contradiction.
  Hence $y \prec sy$.
  Again by \Cref{lem:liftingproperty} applied to $rx \prec srx$ and to $y \prec sy$ we obtain $srx \leq sy$ and by \Cref{cor:liftingproperty} we get $[rx,y] \cong [srx,sy]$.
\end{proof}

For later reference, we further recollect the following property.

\begin{lemma}[{\cite[Lemma 2.2.10]{BB-2005}}]
  \label{lem:realtechlem}
  Let $x,y \in W$ and $r \in \tref$.
  If both, $x \leq xr$ and $y \leq ry$, then $xy \leq xry$.
\end{lemma}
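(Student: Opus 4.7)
The plan is to reduce the conclusion to a single sign test for positive roots, using the conjugation identity $x s_\beta x^{-1} = s_{x(\beta)}$ to move the Bruhat comparison onto a common element.

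Write $r = s_\beta$ with $\beta \in \Phiplus$. The identity
\[
  x r y \;=\; (x r x^{-1})\cdot(x y) \;=\; s_{x(\beta)} \cdot xy
\]
shows that $xy$ and $xry$ differ only by a left multiplication by the reflection $s_{x(\beta)}$, so they are Bruhat comparable, and the direction $xy \leqbru xry$ is equivalent to $\ell(xy) < \ell(s_{x(\beta)} \cdot xy)$, which by \eqref{eq:inversionandbruhat} amounts to $|x(\beta)| \notin \inv(xy)$.

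Next I would translate the two hypotheses into root-sign statements. The right-multiplication analogue of \eqref{eq:inversionandbruhat}, namely that $w \leqbru w s_\gamma$ if and only if $w(\gamma) \in \Phiplus$ for $\gamma \in \Phiplus$, follows from \eqref{eq:inversionandbruhat} via inversion, using $(w s_\gamma)^{-1} = s_\gamma w^{-1}$ together with $\ell(w) = \ell(w^{-1})$. Applied to $x \leqbru x s_\beta$, this yields $x(\beta) \in \Phiplus$, and in particular $|x(\beta)| = x(\beta)$. Applied in its original (left) form to $y \leqbru s_\beta y$, \eqref{eq:inversionandbruhat} combined with $\inv(y) = \Phiplus \cap y(\Phiminus)$ gives $y^{-1}(\beta) \in \Phiplus$.

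The proof then closes with a one-line calculation: it suffices to verify $x(\beta) \notin \inv(xy)$, i.e., $(xy)^{-1}\big(x(\beta)\big) \in \Phiplus$, and indeed
\[
  (xy)^{-1}\big(x(\beta)\big) \;=\; y^{-1} x^{-1}\big(x(\beta)\big) \;=\; y^{-1}(\beta),
\]
which lies in $\Phiplus$ by the second hypothesis. The only genuinely delicate point is making sure that the sign criterion $w \leqbru w s_\gamma \Leftrightarrow w(\gamma) \in \Phiplus$ continues to hold for arbitrary reflections $s_\gamma \in \tref$, even though the length jump $\ell(w s_\gamma) - \ell(w)$ can exceed $1$; granting this, the argument reduces to the computation above.
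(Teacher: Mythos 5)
Your proof is correct, and since the paper cites \cite[Lemma~2.2.10]{BB-2005} without reproducing an argument, there is no in-paper proof to compare against. The root-sign criterion you use is the standard clean route: conjugating $r = s_\beta$ past $x$ to rewrite $xry = s_{x(\beta)}\cdot xy$, then reducing everything to the single sign check $(xy)^{-1}\big(x(\beta)\big) = y^{-1}(\beta) \in \Phiplus$, exactly captures why the two hypotheses combine so tightly. Your translation of the hypotheses is also right: $x \leq xs_\beta$ gives $x(\beta) \in \Phiplus$ (hence $|x(\beta)| = x(\beta)$), and $y \leq s_\beta y$ gives $y^{-1}(\beta) \in \Phiplus$ via $\inv(y) = \Phiplus \cap y(\Phiminus)$.

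The ``delicate point'' you flag is real but benign: you need that for any $t \in \tref$ (not just $t \in \sref$), $w$ and $tw$ are Bruhat comparable with $w < tw$ if and only if $\ell(w) < \ell(tw)$, even when the length gap exceeds $1$. This is a consequence of the strong exchange condition (equivalently the subword property) and is established in \cite{BB-2005} well before Lemma~2.2.10, so there is no circularity; in the present paper it is the same phenomenon encoded in \eqref{eq:inversionandbruhat} via \Cref{lem:techlem}. For contrast, the proof one typically finds in the cited source proceeds combinatorially, by induction on $\ell(y)$ using the lifting property (the paper's \Cref{lem:liftingproperty}); your argument trades that induction for the geometric representation, which is a perfectly fair trade in this paper since root systems are in play from the outset, and it makes the statement transparent rather than mysterious.
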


\subsection{The Demazure product}

For a (not necessarily reduced) word $\Q = \s_1 \dots \s_m$ in $\sref$ and a subset~$X \subseteq \{1,\dots,m\}$, we write $\Q_X$ to be the subword of~$\Q$ of the letters $\s_i$ with $i \in X$.
The \Dfn{Demazure product} $\demazure{\Q} \in W$ may be defined in terms of the weak order recursively by
\[
  \demazure{\Q} =  \begin{cases}
                     \demazure{\Q_{\{1,\dots,m-1\}}} \cdot s_m &\text{ if } \demazure{\Q_{\{1,\dots,m-1\}}}(\alpha_{s_m}) \in \Phiplus \\
                     \demazure{\Q_{\{1,\dots,m-1\}}}           &\text{ if } \demazure{\Q_{\{1,\dots,m-1\}}}(\alpha_{s_m}) \in \Phiminus
                   \end{cases}\ ,
\]
with initial condition $\demazure{\emptyword} = \one \in W$ for the empty word~$\emptyword$.
By~\eqref{eq:weaklength}, these conditions say that the Demazure product of~$\Q$ is obtained by scanning through~$\Q$ from left to right, starting at the identity and going upwards in weak order whenever possible.
One may equivalently define the Demazure product in terms of the Bruhat order as the unique maximal element in Bruhat order among all expressions obtained from the word $\Q = \s_1 \dots \s_m$ by removing letters,
\begin{equation}
  \demazure{\Q} = \max{}_{\leq}\bigset{\wordprod{\Q}{X}}{X \subseteq \{1,\dots,m\}}. \label{eq:demazurestrongdef}
\end{equation}
Here and below, $\wordprod{\Q}{X} \in W$ denotes the product of the simple reflections $s_i$ with $i \in X$ in the given order.

\subsection{Subword complexes for Coxeter systems}
\label{sec:subwordcomplexes}

For $w \in W$ and $\Q = \s_1 \dots \s_m$, the \Dfn{subword complex} $\subwordComplex(\Q,w)$ is the simplicial complex of sets of (positions of) letters in~$\Q$ whose complements contain reduced words for~$w$.
Knutson and Miller introduced subword complexes in the context of Gröbner geometry of Schubert varieties~\cite{KM2004, KM2005}.
By~\eqref{eq:demazurestrongdef}, $\subwordComplex(\Q,w)$ is non-empty if and only if $w \leq \demazure{\Q}$.
Facets of $\subwordComplex(\Q,w)$ are subwords of~$\Q$ whose complements are reduced words for~$w$.
As shown in \cite[Theorem 3.7, Corollary 3.8]{KM2004} the non-empty complex $\subwordComplex(\Q,w)$ is a topological sphere if and only if $w = \demazure{\Q}$ and a topological ball otherwise.
In these cases we call $\subwordComplex(\Q,w)$ \Dfn{spherical} and \Dfn{non-spherical}, respectively.

\medskip

For two facets $I \neq J$ of $\subwordComplex(\Q,w)$ we say~$I$ and~$J$ are \Dfn{adjacent} if there are $i \in I$ and $j \in J$ such that $I \setminus \{i\} = J \setminus \{j\}$.
We call the transition from~$I$ to~$J$ the \Dfn{flip} of~$i$ in~$I$ and furthermore say the flip is \Dfn{increasing} if $i < j$ and \Dfn{decreasing} otherwise.
The \Dfn{root function} $\Root{I}{\cdot} : [m] \rightarrow \Phi = W(\Delta) \subseteq V$ is defined by
\[
  \Root{I}{k} = \wordprod{\Q}{\{1,\dots,k-1\} \setminus I}(\alpha_{s_k}).
\]
As above, $\wordprod{\Q}{X}$ denotes the product of the simple reflections $s_i$, for $i \in X$ in the given order.
Observe here that the root $\Root{I}{k}$ only depends on $I \cap \{1,\dots,k-1\}$ and not on the complete facet~$I$.
The ordered multiset\footnote{\label{fn:orderedsets}We think of facets of subword complexes as being \emph{ordered sets} (sorted lists of indices written in set notation) and we think of root configurations of facets as \emph{ordered multisets} (lists of roots written in multiset notation with ordering inherited from the order of the facet).} $\Roots{I} = \multiset{\Root{I}{i}}{i \in I}$ is called \Dfn{root configuration} of the facet~$I$.

\medskip

The following lemma recalls several properties of flips in subword complexes in terms of root configurations.
These can be found in \cite[Proposition~2]{MR3156785}.

\begin{lemma}
  \label{lem:rootsandflips}
  Let $I$ be a facet of the non-empty subword complex $\subwordComplex(\Q,w)$.
  \begin{enumerate}
    \item The map $\Root{I}{\cdot} : k \mapsto \Root{I}{k}$ is a bijection between the complement of~$I$ and~$\inv(w)$.

    \item\label{eq:rootsbyflips} For $i \in I$, there exists a facet~$J$ and an index $j \in J$ such that $I \setminus \{i\} = J \setminus \{j\}$ if and only if $|\Root{I}{i}| \in \inv(w)$.
    In this case, the index~$j$ is the unique position in the complement of~$I$ for which $\Root{I}{j} = |\Root{I}{i}|$ and the sign is determined by
    \[
      \Root{I}{j} = \begin{cases}
                       \Root{I}{i} &\text{if } i < j, \\
                      -\Root{I}{i} &\text{if } i > j.
                    \end{cases}
    \]

    \item In the situation of~\ref{eq:rootsbyflips}, the map $\Root{J}{\cdot}$ is obtained from $\Root{I}{\cdot}$ by
    \[
      \Root{J}{k} = 
        \begin{cases}
        s_{\Root{I}{i}} \big(\Root{I}{k}\big) & \text{if } \min(i,j) < k \leq \max(i,j), \\
        \Root{I}{k}                   & \text{otherwise. }
        \end{cases}
    \]
  \end{enumerate}
\end{lemma}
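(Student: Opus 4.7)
The plan is to reduce everything to analyzing the single auxiliary word $\Q_L$ with $L = ([m]\setminus I)\cup\{i\}$, which adjoins the position $i$ to the complement of $I$. The identity driving the argument is $\wordprod{\Q}{L} = s_\beta w$ where $\beta = \Root{I}{i}$: writing $L = \{l_1<\cdots<l_{\ell+1}\}$ with $i = l_p$ and setting $u = s_{l_1}\cdots s_{l_{p-1}}$, $v = s_{l_{p+1}}\cdots s_{l_{\ell+1}}$, the facet condition on $I$ gives $\wordprod{\Q}{L\setminus\{i\}} = uv = w$ as a reduced expression, while $\wordprod{\Q}{L} = us_iv$; since $\{1,\dots,i-1\}\setminus I = \{l_1,\dots,l_{p-1}\}$ forces $\beta = u(\alpha_{s_i})$, we obtain $us_i = s_\beta u$ and hence $\wordprod{\Q}{L} = s_\beta w$.

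For part~(a), I would write $[m]\setminus I = \{k_1<\cdots<k_\ell\}$ and apply the inversion formula~\eqref{eq:inversionset} to the reduced word $\s_{k_1}\cdots\s_{k_\ell}$ for $w$. Since $\{1,\dots,k_p-1\}\setminus I = \{k_1,\dots,k_{p-1}\}$, the values $\Root{I}{k_p} = s_{k_1}\cdots s_{k_{p-1}}(\alpha_{s_{k_p}})$ enumerate $\inv(w)$ bijectively.

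For part~(b), I would use~\eqref{eq:inversionandbruhat} to translate $|\beta|\in\inv(w)$ into $\ell(s_\beta w) < \ell(w)$. If $|\beta|\notin\inv(w)$, then $\wordprod{\Q}{L}$ has length $\ell(w)+1$, so $\Q_L$ is reduced, and \Cref{lem:techlem} applied to the equation $s_\beta\cdot\wordprod{\Q}{L} = w$ shows that the only deletion producing $w$ is at the already known position~$p$ of $\s_i$, ruling out a flip. If $|\beta|\in\inv(w)$, part~(a) supplies a unique $j\in[m]\setminus I$ with $\Root{I}{j} = |\beta|$; writing $j = k_q$ and factoring $v = as_jb$, the equation $\Root{I}{j} = \pm\beta$ forces $a(\alpha_{s_j}) = \pm\alpha_{s_i}$, yielding the braid identity $s_i a = a s_j$, hence $\wordprod{\Q}{L\setminus\{j\}} = us_iab = uas_jb = w$. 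Since the product has length $\ell(w)$, it is a reduced expression, and $J = (I\setminus\{i\})\cup\{j\}$ is the desired facet. The sign then falls out: in the reduced expression $\wordprod{\Q}{L\setminus\{j\}}$, the prefix preceding $\s_i$ contributes the inversion $u(\alpha_{s_i}) = \beta$ of $w$, which must therefore be positive in the case $i<j$; the case $i>j$ is handled symmetrically by swapping the roles of $I$ and $J$.

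For part~(c), I would compare $\{1,\dots,k-1\}\setminus I$ with $\{1,\dots,k-1\}\setminus J$ case by case. For $k\leq\min(i,j)$ the two sets coincide; for $k>\max(i,j)$ they differ only by the swap $i\leftrightarrow j$, which preserves the prefix product via the braid identity $us_ia = uas_j$ from~(b); so $\Root{J}{k} = \Root{I}{k}$ in both cases. For $\min(i,j)<k\leq\max(i,j)$ the $J$-prefix is obtained from the $I$-prefix by inserting $s_i$ at position~$p$ (or deleting $s_j$), which amounts to left-multiplication by $s_\beta$, and conjugation gives $\Root{J}{k} = s_\beta(\Root{I}{k}) = s_{\Root{I}{i}}(\Root{I}{k})$. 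I expect the main obstacle to be the sign determination in part~(b): the algebraic identity $a(\alpha_{s_j}) = \pm\alpha_{s_i}$ only pins things down up to sign, and fixing the sign requires correctly reading off the inversion contributed by $\s_i$ in the reduced expression for the flipped facet, which depends on whether $i<j$ or $i>j$; once this is resolved, part~(c) reduces to routine bookkeeping.
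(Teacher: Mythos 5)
Your setup via the auxiliary set $L = ([m]\setminus I)\cup\{i\}$ and the identity $\wordprod{\Q}{L} = s_\beta w$ (with $\beta = \Root{I}{i}$) is the right tool, and parts~(a), (c), and the non-flippable half of part~(b) are handled correctly; the paper itself does not supply a proof of this lemma but cites \cite[Proposition~2]{MR3156785}, so there is no in-paper argument to compare against.

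There is, however, a genuine gap in part~(b). You construct a flip landing at the particular $j$ with $\Root{I}{j}=|\beta|$ supplied by part~(a), but the lemma additionally asserts that \emph{every} facet $J$ with $I\setminus\{i\}=J\setminus\{j\}$ has this $j$, i.e.\ that the flip is unique, and your argument does not rule out other candidates. This is closable inside your framework: write $L=\{l_1<\dots<l_{\ell+1}\}$ with $i=l_p$, and let $\gamma_r = s_{l_1}\cdots s_{l_{r-1}}(\alpha_{s_{l_r}})$ be the prefix roots of $\Q_L$. Then $\wordprod{\Q}{L\setminus\{l_r\}} = s_{\gamma_r}\,\wordprod{\Q}{L} = s_{\gamma_r}\,s_\beta\,w$, so $\Q_{L\setminus\{l_r\}}$ is a (necessarily reduced) word for $w$ if and only if $|\gamma_r|=|\beta|$. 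For $r<p$ one computes $\gamma_r=\Root{I}{l_r}$, and for $r>p$ one gets $\gamma_r = s_\beta\bigl(\Root{I}{l_r}\bigr)$ using $u s_i = s_\beta u$; since $\Root{I}{l_r}\in\Phiplus$ for $r\ne p$ by part~(a), the condition $|\gamma_r|=|\beta|$ reduces in either case to $\Root{I}{l_r}=|\beta|$, and part~(a) makes that position unique. A smaller imprecision: your phrase ``the case $i>j$ is handled symmetrically by swapping the roles of $I$ and $J$'' does not quite make sense at that stage, since $J$ has not yet been constructed for $j<i$. The correct symmetric move is to factor $u = a\,s_j\,b$ (rather than $v$), derive $b(\alpha_{s_i})=\pm\alpha_{s_j}$ and hence $b s_i = s_j b$, and then $\wordprod{\Q}{L\setminus\{j\}} = a b s_i v = a s_j b v = w$; the sign determination then runs parallel to the $i<j$ case, reading off the inversion contributed by $\s_i$ in the reduced word $\Q_{L\setminus\{j\}}$.
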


Non-empty subword complexes come with two extremal facets.
Considering facets as ordered sets\footref{fn:orderedsets}, the \Dfn{greedy facet} $\greedy$ is the lexicographically first facet and the \Dfn{antigreedy facet} $\antigreedy$ is the lexicographically last facet.
Using \Cref{lem:rootsandflips}, one can describe these greedy facets by their root configurations.
We also refer to \cite[Section~4.2]{MR3156785} for a discussion of greedy facets.

\begin{lemma}
\label{lem:greedyfacet}
  Let $\subwordComplex(\Q,w)$ be a non-empty subword complex.
  Its greedy facet $\greedy$ is the unique facet with $\Roots{\greedy} \subseteq \Phiplus$ and its antigreedy facet $\antigreedy$ is the unique facet with $\Roots{\antigreedy} \subseteq w(\Phiplus)$.
\end{lemma}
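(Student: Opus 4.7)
The plan is to prove both characterizations in parallel via a sign-to-flip translation derived from \Cref{lem:rootsandflips}, to deduce the two inclusions from the extremal lex-properties of $\greedy$ and $\antigreedy$, and then to argue uniqueness by induction on $|\Q|$ using the canonical recursive decomposition of $\subwordComplex(\Q,w)$.

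The first step is the sign-to-flip translation. For any facet $I$ of $\subwordComplex(\Q,w)$ and any $i \in I$, setting $u := \wordprod{\Q}{([m] \setminus I) \cap [i-1]}$ one has $\Root{I}{i} = u(\alpha_{s_i})$; since $[m] \setminus I$ is a reduced word for~$w$, the element $u$ is a prefix and thus satisfies $u \leqweak w$ and $\inv(u) \subseteq \inv(w)$. If $\Root{I}{i} \in \Phiminus$, then $|\Root{I}{i}| = u(-\alpha_{s_i}) \in \Phiplus \cap u(\Phiminus) = \inv(u) \subseteq \inv(w)$, so \Cref{lem:rootsandflips}\ref{eq:rootsbyflips} makes $i$ flippable, and its sign condition forces the partner to satisfy $j < i$ (a decreasing flip, producing a lex-smaller facet). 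Dually, $\Root{I}{i} \in \Phiplus \cap \inv(w)$ makes $i$ flippable with partner $j > i$ (an increasing flip, producing a lex-larger facet).

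The two inclusions now follow immediately. Lex-minimality of $\greedy$ excludes decreasing flips, so no $i \in \greedy$ can have $\Root{\greedy}{i} \in \Phiminus$, yielding $\Roots{\greedy} \subseteq \Phiplus$. Lex-maximality of $\antigreedy$ excludes increasing flips, so no $i \in \antigreedy$ can have $\Root{\antigreedy}{i} \in \Phiplus \cap \inv(w)$; any negative $\Root{\antigreedy}{i} \in \Phiminus$ automatically satisfies $|\Root{\antigreedy}{i}| \in \inv(w) = \Phiplus \cap w(\Phiminus)$, hence $\Root{\antigreedy}{i} \in w(\Phiplus)$, while any positive $\Root{\antigreedy}{i}$ then lies in $\Phiplus \setminus \inv(w) = \Phiplus \cap w(\Phiplus) \subseteq w(\Phiplus)$, yielding $\Roots{\antigreedy} \subseteq w(\Phiplus)$.

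For uniqueness I would induct on $m = |\Q|$ using the recursive decomposition along the first letter~$\s_1$: a facet~$I$ either contains $1$, in which case $I \setminus \{1\}$ shifts to a facet of $\subwordComplex(\s_2\cdots\s_m, w)$ with $\Roots{I} = \{\alpha_{s_1}\} \cup \Roots{I \setminus \{1\}}$, or $1 \notin I$ (only possible when $\ell(s_1 w) < \ell(w)$), in which case the shifted~$I$ is a facet of $\subwordComplex(\s_2\cdots\s_m, s_1 w)$ with $\Roots{I} = s_1 \cdot \Roots{I'}$. Under $\Roots{I} \subseteq \Phiplus$, the first case reduces to $\Roots{I\setminus\{1\}} \subseteq \Phiplus$, which is handled directly by the inductive hypothesis. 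I expect the main obstacle to be the second case, where the hypothesis becomes the twisted positivity $\Roots{I'} \subseteq s_1 \Phiplus = (\Phiplus \setminus \{\alpha_{s_1}\}) \cup \{-\alpha_{s_1}\}$: this does not fit the inductive hypothesis directly and must be ruled out whenever a first-case facet exists, via a Bruhat/Demazure-order analysis comparing $\demazure{\s_1 \cdots \s_m}$ with $\demazure{\s_2 \cdots \s_m}$ and controlling whether $\alpha_{s_1}$ appears in the root configuration of the greedy facet of the smaller complex. The antigreedy uniqueness is symmetric, with decreasing/increasing flips and $\Phiplus / w(\Phiplus)$ exchanged throughout.
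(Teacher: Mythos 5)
Your argument for the two inclusions $\Roots{\greedy} \subseteq \Phiplus$ and $\Roots{\antigreedy} \subseteq w(\Phiplus)$ is correct and is essentially the paper's: both translate the sign of $\Root{I}{i}$ into the direction of the corresponding flip via \Cref{lem:rootsandflips}\ref{eq:rootsbyflips}. Your observation that $\Root{I}{i} \in \Phiminus$ automatically forces $|\Root{I}{i}| \in \inv(u) \subseteq \inv(w)$ (so the index is flippable, decreasing) is the key point and is accurate; it is implicit in the paper's one-line ``this immediately follows''.

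The uniqueness direction, however, is a genuine gap. You sketch an induction on $|\Q|$ along the first letter and yourself flag the obstruction in the case $1 \notin I$: there the shifted facet $I'' \in \subwordComplex(\s_2 \cdots \s_m, s_1 w)$ has $\Roots{I''} \subseteq s_1(\Phiplus) = (\Phiplus \setminus \{\alpha_{s_1}\}) \cup \{-\alpha_{s_1}\}$, which is not the inductive hypothesis, and you do not close that case. So as written the proof establishes that $\greedy$ (resp.\ $\antigreedy$) \emph{is} a facet with the stated root-configuration property, but not that it is the \emph{only} one. The paper avoids this entirely: it cites \cite[Proposition~6]{MR3156785}, which already states that $\greedy$ (resp.\ $\antigreedy$) is the \emph{unique} facet whose flips are all increasing (resp.\ all decreasing), and then shows these flip-direction properties are equivalent to the root-sign properties via the same translation you use. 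That equivalence, combined with the cited uniqueness, finishes the proof in one step. Your inductive plan might be made to work---one would have to rule out the ``twisted'' case $-\alpha_{s_1} \in \Roots{I''}$ coexisting with a facet containing position $1$, which amounts to comparing $\demazure{\Q}$ with $\demazure{\s_2 \cdots \s_m}$ and tracking how $\alpha_{s_1}$ behaves across the two subcomplexes---but as it stands that analysis is missing, and it is exactly the place where the paper's citation does the work.
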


\begin{proof}
  As shown in \cite[Proposition~6]{MR3156785}, the greedy facet is the unique facet for which all flips are increasing, and the antigreedy facet is the unique facet for which all flips are decreasing.
  These properties are equivalent to the respective descriptions in terms of root configurations:
  For the greedy facet, this immediately follows from \Cref{lem:rootsandflips}\ref{eq:rootsbyflips}, while it follows for the antigreedy facet from this lemma together with the computation
  \begin{align*}
    \Roots{\antigreedy} &\subseteq \big(\Phiplus \setminus \inv(w)\big) \cup -\inv(w) \\
                        &= \big( \Phiplus \setminus \big(\Phiplus \cap -w(\Phiplus  )\big) \big) \cup \big( \Phiminus \cap w(\Phiplus) \big) \\
                        &= \big( \Phiplus \cap w(\Phiplus) \big) \cup \big( \Phiminus \cap w(\Phiplus) \big) \\
                        &= w(\Phiplus),
  \end{align*}
  where the first inclusion is the describing property that all flips are decreasing and the others are elementary reformulations.
\end{proof}

\subsection{Ongoing examples}
\label{sec:examples1}

Throughout this paper we present examples in Coxeter systems of types $A_2$, $B_2$ and $B_3$.
We generally write~$\sref = \{s_1,\dots,s_n\}$ for concrete generators~$s_i$ and also $\Delta = \{\alpha_1,\dots,\alpha_n\}$ for simple roots with $\alpha_i = \alpha_{s_i}$.
To keep examples compact, we write shorthand $\Q = 123212 = \s_1\s_2\s_3\s_2\s_1\s_2$ for words in~$\sref$ and also for elements $w = 1231 = s_1s_2s_3s_1 \in W$ as reduced words, and we abbreviate $\del{213} = 2\alpha_1+\alpha_2+3\alpha_3$ and $\delmin{213} = -\del{213}$ for vectors written in the basis of simple roots.

\begin{example}[Type $A_2$]
  We have $\sref = \{ s_1,s_2 \}$, the simple and positive roots $\Delta \subset \Phiplus$ given by
  $
    \{ \del{10}, \del{01} \}\subset \{ \del{10},\del{01},\del{11} \}
  $
  and
  \[
    s_1(\del{10}) = \delmin{10}, \quad s_1(\del{01}) = \del{11},\quad
    s_2(\del{10}) = \del{11}, \quad s_2(\del{01}) = \delmin{01}.
  \]
  The fundamental weights are
  $
    \nabla = \{ \omega_1 = \tfrac{1}{3}(\del{21}), \omega_2 = \tfrac{1}{3}(\del{12}) \}.
  $
\end{example}

\begin{example}[Type $B_2$]
  We have $\sref = \{ s_1,s_2 \}$, the simple and positive roots $\Delta \subset \Phiplus$ given by
  $
    \{ \del{10}, \del{01} \}\subset \{ \del{10}, \del{01}, \del{11}, \del{12} \}
  $
  and
  \[
    s_1(\del{10}) = \delmin{10}, \quad s_1(\del{01}) = \del{11}, \quad
    s_2(\del{10}) = \del{12}, \quad s_2(\del{01}) = \delmin{01}.
  \]
  The fundamental weights are
  $
  \nabla = \{ \omega_1 = \del{11}, \omega_2 = \tfrac{1}{2}(\del{12}) \}.
  $
  \end{example}
  
\begin{example}[Type $B_3$]
  We have $\sref = \{ s_1,s_2,s_3 \}$, the simple and positive roots $\Delta \subset \Phiplus$ given by
  $
  \{ \del{100}, \del{010}, \del{001} \}\subset \{ \del{100},\del{010},\del{001},\del{110},\del{011},\del{111},\del{012},\del{112},\del{122} \}
  $
  and
  \begin{align*}
    s_1(\del{100}) = \delmin{100}, \quad s_1(\del{010}) = \del{110}, \quad s_1(\del{001}) = \del{001}, \\
    s_2(\del{100}) = \del{110}, \quad s_2(\del{010}) = \delmin{010}, \quad s_2(\del{001}) = \del{011}, \\
    s_3(\del{100}) = \del{100}, \quad s_3(\del{010}) = \del{012}, \quad s_3(\del{001}) = \delmin{001}, \\
  \end{align*}
  The fundamental weights are
  $
  \nabla = \{ \omega_1 = (\del{111}), \omega_2 = (\del{110}), \omega_3 = \tfrac{1}{2}(\del{112}) \}.
  $
\end{example}

\section{The interplay between Bruhat intervals and subword complexes}
\label{sec:bruhatcoversandsubwordcomplexes}

This section studies the interplay between Bruhat intervals and subword complexes via root configurations.
To this end, we recall the \Dfn{(closed) cone} over a finite set~$X \subseteq V$ to be
\[
  \cone(X) = \textstyle\sum_{v \in X}\RR_+ v = \set{ \textstyle\sum_{v \in X} \lambda_vv}{\lambda_v \geq 0}.
\]
The following notions of Bruhat cones were studied by Dyer in the context of Kazhdan-Lusztig polynomials~\cite{Dyer-1994}.
For a Bruhat interval $[x,y]$, the \Dfn{upper Bruhat cone} $\Cplus{x}{y}$ and the \Dfn{lower Bruhat cone} $\Cminus{x}{y}$ are defined by $\Cplus{x}{y} = \cone\Eplus{x}{y}$ and $\Cminus{x}{y} = \cone\Eminus{x}{y}$ for
\[
  \Eplus{x}{y}  = \bigset{\beta \in \Phiplus}{x \prec s_\beta x \leq y}, \quad
  \Eminus{x}{y} = \bigset{\beta \in \Phiplus}{x \leq s_\beta y \prec y}.
\]
This is, the upper Bruhat cone is the cone spanned by the labels of the atoms in the Bruhat interval $[x,y]$ and the lower Bruhat cone is the cone spanned by the labels of the coatoms.
Since we mainly discuss upper Bruhat cones, we simply call these \Dfn{Bruhat cones}.

\medskip

The goal of this section is to develop tools connecting Bruhat cones and root configurations, culminating in the following theorem.

\begin{theorem}
\label{thm:cone_equality}
  Let~$\Q$ be a word in~$\sref$ and let $w \in W$ with $w \leq \demazure{\Q}$.
  Then
  \[
    \Cplus{w}{\demazure{\Q}} = \bigcap_I \cone\Roots{I}
  \]
  where the intersection is taken over all facets~$I$ of the subword complex $\subwordComplex(\Q,w)$.
\end{theorem}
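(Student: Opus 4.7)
The plan is to prove the two inclusions separately, both by playing the cover-relation structure of the Bruhat interval $[w,\demazure{\Q}]$ against the flip structure of the subword complex $\subwordComplex(\Q,w)$.

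For the inclusion $\Cplus{w}{\demazure{\Q}} \subseteq \bigcap_I \cone\Roots{I}$, since the left-hand side is generated by $\Eplus{w}{\demazure{\Q}}$, it suffices to show that every cover label $\beta$ with $w \prec s_\beta w \leq \demazure{\Q}$ lies in $\cone\Roots{I}$ for every facet $I$ of $\subwordComplex(\Q,w)$. First I would witness $\beta$ at some specific facet. Since $s_\beta w \leq \demazure{\Q}$, the complex $\subwordComplex(\Q,s_\beta w)$ is nonempty; pick any facet $J$ of it. The subword property of the Bruhat order applied to the cover $w \prec s_\beta w$, together with \Cref{lem:techlem}, produces a unique index $k$ such that $I^\ast := J \cup \{k\}$ is a facet of $\subwordComplex(\Q,w)$, and one reads off $\Root{I^\ast}{k} = \beta$; in particular $\beta \in \Roots{I^\ast} \subseteq \cone\Roots{I^\ast}$. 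Then I would propagate to an arbitrary facet $I$ by choosing a sequence of flips $I^\ast = I_0 \to I_1 \to \cdots \to I_t = I$ and re-expressing $\beta$ as a non-negative combination of the new root configuration at each step, using the transformation formula of \Cref{lem:rootsandflips}\ref{eq:rootsbyflips}.

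For the inclusion $\bigcap_I \cone\Roots{I} \subseteq \Cplus{w}{\demazure{\Q}}$, I would argue by contraposition using duality. If $v \notin \Cplus{w}{\demazure{\Q}}$, the separating hyperplane theorem supplies a linear functional $f$ with $f(v) < 0$ and $f(\beta) \geq 0$ for every $\beta \in \Eplus{w}{\demazure{\Q}}$. It then suffices to construct a single facet $I$ with $f(r) \geq 0$ for every $r \in \Roots{I}$: together with $v \in \cone\Roots{I}$ this would force $f(v) \geq 0$, contradicting $f(v) < 0$. The desired $I$ is an ``$f$-antigreedy'' facet, built in analogy with the antigreedy facet described in \Cref{lem:greedyfacet} by scanning $\Q$ left-to-right and using the sign of $f$ on the current root at each position to decide membership. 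The assumption $f \geq 0$ on $\Cplus{w}{\demazure{\Q}}$ is exactly what prevents this scan from getting stuck: a blocking position would translate, via the lifting properties \Cref{lem:liftingproperty}, \Cref{cor:liftingproperty}, and \Cref{lem:extendeddiamond}, into a cover in $[w,\demazure{\Q}]$ whose label has negative $f$-value, contradicting the hypothesis.

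The main obstacle is that $\cone\Roots{I}$ genuinely changes from facet to facet, so there is no pointwise invariant to carry along under flips. In the forward direction this forces the recursive-flip argument to exploit the specific Bruhat provenance of $\beta$ (in particular $\beta \in \Phiplus \cap w(\Phiplus)$) in order to keep all coefficients non-negative across each flip; in the reverse direction it means the $f$-antigreedy construction must be shown both to remain inside $\subwordComplex(\Q,w)$ and to terminate at a facet whose entire root configuration is $f$-non-negative. I therefore expect the actual proof to first establish the supporting containment properties of Bruhat cones announced at the start of \Cref{sec:bruhatcoversandsubwordcomplexes} and only then to close both inclusions.
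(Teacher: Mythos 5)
Your reverse inclusion is in line with the paper: it constructs an $f$-antigreedy facet via a left-to-right scan (\Cref{algo:uniquefacet}) and shows its root configuration is $f$-non-negative (\Cref{prop:algowelldefined}), then concludes by duality. The mechanism that keeps the scan from getting stuck is not really the lifting properties but rather the combination of the $f$-hypothesis with the recursive Demazure-product bookkeeping in \Cref{prop:algoprops}, but that is a matter of emphasis.

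The forward inclusion has a genuine gap. You witness $\beta\in\Eplus{w}{\demazure{\Q}}$ at one facet $I^\ast$ (this is exactly \Cref{lem:coversandnonflipables}) and then propose to propagate $\beta\in\cone\Roots{I}$ along a flip path ``re-expressing $\beta$ as a non-negative combination... using the transformation formula'' of \Cref{lem:rootsandflips}. But that formula applies the reflection $s_{\Root{I}{i}}$ to part of the root configuration and swaps $\Root{I}{i}$ for its negative; reflections do not send non-negative combinations to non-negative combinations, and after the flip $\beta$ typically leaves the root configuration entirely and survives only inside its cone (the paper's $A_2$ example $\Q=1212$, $w=12$, $\beta=\alpha_2$ makes this explicit). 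You acknowledge the obstacle, but the invariant you name ($\beta\in\Phiplus\cap w(\Phiplus)$, i.e.\ $\beta\notin\inv(w)$) does not by itself encode the Bruhat constraint $s_\beta w\leq\demazure{\Q}$; an arbitrary root in $\Phiplus\setminus\inv(w)$ with $s_\beta w\not\leq\demazure{\Q}$ can fail to lie in some $\cone\Roots{I}$, so your claimed invariant is not what carries across flips, and you give no concrete argument that any invariant does. The paper avoids this entirely: \Cref{prop:nonflipablerootincone} is proved by induction on $|\Q|$ using the decomposition $\subwordComplex(\Q,w)=\big(\{1\}\star\subwordComplex(\Q',w)\big)\sqcup\subwordComplex(\Q',s_1w)$, splitting on whether $1\in I$; the case $1\in I$ is handled by \Cref{cor:bruhatcone} (which controls how $\Cplus{w}{\cdot}$ grows when the top element increases) and the case $1\notin I$ by \Cref{lem:extendingcovercones} (which controls how the cone transforms when the bottom element decreases). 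Those Bruhat-cone containment results, not any flip-invariance, are the load-bearing ingredients, and they are the ``containment properties of Bruhat cones'' you defer to at the end of your sketch; without them the forward inclusion does not close.
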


\subsection{Properties of Bruhat cones}
\label{sec:conesinbruhatintervals}

We first recall several properties of Bruhat cones from \cite{Dyer-1994} and then exhibit further recursive properties of these.

\begin{lemma}{\cite[Proposition 1.4, Proposition 3.6]{Dyer-1994}}
  \label{lem:dyer}
  Let $[x,y]$ be a Bruhat interval and let $w \in W$.
  Then
  \begin{enumerate}
    \item\label{it:dyer-a} $\Cminus{e}{w} \cap \Cplus{w}{\wo} = \{0\}$ and $\Phiplus \subseteq \Cminus{\one}{w} \cup \Cplus{w}{\wo}$.
    
    \item\label{it:dyer-c} $\Eplus{x}{y}$ are the rays of $\Cplus{x}{y}$ and $\Eminus{x}{y}$ are the rays of $\Cminus{x}{y}$.
    
    \item\label{it:dyer-d} For $\beta \in \Eplus{x}{y}$ we have $\Cplus{x}{y} \subseteq \Cplus{s_\beta x}{y} + \RR (\beta)$, and similar for $\Cminus{x}{y}$.
    
    \item\label{it:dyer-e} For $\beta \in \Phiplus$ with $x \leq s_\beta x \leq y$ we have $\beta \in \Cplus{x}{y}$, and similar for $\Cminus{x}{y}$.
  \end{enumerate}
\end{lemma}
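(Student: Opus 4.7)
The plan is to prove the four parts in the order (a)-separation, (d), (a)-coverage, (c), (b), reflecting their logical dependencies: (d) yields the coverage in (a), and underlies both (c) and (b). For the separation in (a), the strategy is to exhibit an explicit separating functional. Take $\rho = \sum_{s \in \sref}\omega_s$; the relations $\alpha_s = \sum_t a_{ts}\omega_t$ and $\langle\omega_s,\alpha_t^\vee\rangle = \delta_{st}$ force $\langle\beta,\rho\rangle > 0$ for every $\beta \in \Phiplus$. Set $f(v) := -\langle v, w(\rho)\rangle = -\langle w^{-1}(v),\rho\rangle$. For $\beta \in \Eminus{\one}{w}$, the cover $s_\beta w \prec w$ forces $\ell(s_\beta w) < \ell(w)$, whence $\beta \in \inv(w) = \Phiplus \cap w(\Phiminus)$ by \Cref{lem:techlem} and \eqref{eq:inversionandbruhat}, so $w^{-1}(\beta) \in \Phiminus$ and $f(\beta) > 0$. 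For $\beta \in \Eplus{w}{\wo}$, $w \prec s_\beta w$ analogously gives $w^{-1}(\beta) \in \Phiplus$ and $f(\beta) < 0$. Hence $\Cminus{\one}{w}$ and $\Cplus{w}{\wo}$ sit in strictly opposite open half-spaces and meet only at the origin.

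For (d), I induct on $k := \ell(s_\beta x) - \ell(x)$. The base case $k = 1$ is immediate from $\beta \in \Eplus{x}{y}$. For $k \geq 2$ the graded interval $[x, s_\beta x]$ admits an atom $s_\gamma x$, yielding $\gamma \in \Eplus{x}{y}$. The reflection identity $s_\gamma s_\beta x = s_{s_\gamma(\beta)}(s_\gamma x)$ combined with the lifting machinery (\Cref{lem:liftingproperty}, \Cref{lem:bruhatdiamond}, \Cref{lem:extendeddiamond}) places $s_\gamma s_\beta x$ in $[s_\gamma x, y]$ with strictly smaller length gap to $s_\gamma x$, so the inductive hypothesis gives $|s_\gamma(\beta)| \in \Cplus{s_\gamma x}{y}$. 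Using the reflection formula $\beta = s_\gamma(\beta) + \langle\beta,\gamma^\vee\rangle\gamma$ and a case analysis on the sign of $s_\gamma(\beta)$ then writes $\beta$ as a nonnegative combination of $\gamma$ and an element of $\Cplus{s_\gamma x}{y}$; an auxiliary induction (reapplying the same machinery to shorter intervals) shows this combined cone sits inside $\Cplus{x}{y}$, closing the induction. The coverage in (a) follows at once: for $\beta \in \Phiplus$, either $\beta \in \inv(w)$, whence $s_\beta w \in [\one, w]$ and the $\Cminus$-variant of (d) gives $\beta \in \Cminus{\one}{w}$, or $\beta \notin \inv(w)$, whence $s_\beta w \in [w, \wo]$ and (d) directly gives $\beta \in \Cplus{w}{\wo}$.

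For (c), given $\gamma \in \Eplus{x}{y}\setminus\{\beta\}$, the identity $s_\beta s_\gamma x = s_{s_\beta(\gamma)}(s_\beta x)$ together with \Cref{lem:extendeddiamond} either places $s_\beta s_\gamma x$ in $[s_\beta x, y]$ directly, or reduces to that case via a nearby Bruhat position; then (d) applied inside $[s_\beta x, y]$ together with $\gamma = s_\beta(\gamma) + \langle\gamma,\beta^\vee\rangle\beta$ yields $\gamma \in \Cplus{s_\beta x}{y} + \RR\beta$. For (b), if $\beta = \sum_i c_i \beta_i$ with $\beta, \beta_i \in \Eplus{x}{y}$, $\beta_i \neq \beta$, $c_i > 0$, then rewriting each $\beta_i = v_i + \lambda_i\beta$ via (c) with $v_i \in \Cplus{s_\beta x}{y}$ produces $(1-\sum_i c_i\lambda_i)\beta = \sum_i c_i v_i \in \Cplus{s_\beta x}{y}$. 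The separating functional from the first paragraph, now applied at $s_\beta x$ in place of $w$, gives $f_{s_\beta x}(\beta) = \langle x^{-1}(\beta),\rho\rangle > 0$ while $f_{s_\beta x} < 0$ on $\Cplus{s_\beta x}{y}\setminus\{0\}$, hence $\beta \notin \Cplus{s_\beta x}{y}$; the containment $\Cplus{s_\beta x}{y} \subseteq \cone\Phiplus$ moreover rules out $-\beta$. This forces $1 - \sum c_i\lambda_i = 0$ and $\sum c_i v_i = 0$; pointedness of $\Cplus{s_\beta x}{y}$ together with discreteness of the root system then yields $\beta_i = \beta$, contradicting the hypothesis. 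The main obstacle is the inductive step in (d): the atom $\gamma$ must be chosen so that the reflection identity produces a \emph{strictly} smaller length gap \emph{and} so that $\langle\beta,\gamma^\vee\rangle$ carries the correct sign for a \emph{nonnegative} combination — precisely the subtle case analysis that the preparatory lemmas collected earlier in the section were set up to accommodate.
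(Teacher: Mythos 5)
The paper offers no proof of this lemma at all---it is imported verbatim from Dyer's paper---so your attempt has to stand entirely on its own. Its outer layers do: the functional $f(v) = -\langle v, w(\rho)\rangle$ genuinely separates $\Cminus{\one}{w}$ from $\Cplus{w}{\wo}$ and gives the first half of \Cref{lem:dyer}\ref{it:dyer-a}, and your deduction of the ray statement~\ref{it:dyer-c} from~\ref{it:dyer-d} (via pointedness of $\Cplus{s_\beta x}{y}$ and the exclusion of $\pm\beta$ from it) is sound. The difficulty is that everything then funnels into the inductive step for~\ref{it:dyer-e}, and that step, as written, does not close.

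Concretely: (i) for an arbitrary atom $s_\gamma x$ of $[x, s_\beta x]$ there is no guarantee that $\ell(s_\gamma s_\beta x) < \ell(s_\beta x)$, that $s_\gamma x \leq s_\gamma s_\beta x \leq y$, or that $\langle \beta, \gamma^\vee\rangle \geq 0$; each can fail, and exhibiting an atom for which all hold simultaneously is the actual content of Dyer's argument (he extracts it from the canonical generators of the dihedral reflection subgroup $\langle s_\beta, s_\gamma\rangle$). You correctly identify this as ``the main obstacle'' but do not resolve it, so the induction has no engine. (ii) The ``auxiliary induction'' asserting $\Cplus{s_\gamma x}{y} + \RRpos\gamma \subseteq \Cplus{x}{y}$ is a claim of the same depth as \ref{it:dyer-d} and \ref{it:dyer-e} themselves: an element $\delta \in \Eplus{s_\gamma x}{y}$ satisfies $s_\gamma x \prec s_\delta s_\gamma x \leq y$, but $s_\delta s_\gamma x$ is not of the form $s_\epsilon x$ for a reflection applied to $x$, so \ref{it:dyer-e} cannot be re-invoked and nothing established earlier yields $\delta \in \Cplus{x}{y}$; this is a circularity risk, not a routine verification. (iii) \Cref{lem:bruhatdiamond} and \Cref{lem:extendeddiamond} require one of the two reflections involved to be \emph{simple}; in your steps for \ref{it:dyer-d} and \ref{it:dyer-e} you apply them to two arbitrary reflections $s_\beta$, $s_\gamma$, which their hypotheses do not cover. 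Until (i)--(iii) are repaired, parts \ref{it:dyer-d}, \ref{it:dyer-e}, the coverage half of \ref{it:dyer-a}, and hence also \ref{it:dyer-c} remain unproved.
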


This lemma has the following consequence, which is interesting in itself while not used in our further considerations.
It can also be directly deduced from~\eqref{eq:separatedsets} and~\eqref{eq:inversionandbruhat}.

\begin{corollary}
  Let $w \in W$.
  Then $\Eminus{\one}{w}$ are the extremal rays of $\cone\inv(w)$ and $\Eplus{w}{\wo} = \Eminus{\one}{w\cdot\wo}$ are the extremal rays of $\cone\inv(w\cdot\wo)$.
  In particular,
  \[
  \Cminus{\one}{w} \cap \Phi = \inv(w), \quad \Cplus{w}{\wo} \cap \Phi = \inv(w\cdot\wo).
  \]
\end{corollary}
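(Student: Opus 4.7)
The plan is to reduce everything to two claims: $\Cminus{\one}{w} = \cone(\inv(w))$ and $\Eplus{w}{\wo} = \Eminus{\one}{w\wo}$. Once these are in hand, the extremal-ray statements fall out of \Cref{lem:dyer}\ref{it:dyer-c}, and the intersections with $\Phi$ come from a biclosure property of inversion sets.

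First, to prove $\Cminus{\one}{w} = \cone(\inv(w))$, I would note that $\Eminus{\one}{w} \subseteq \inv(w)$ holds by definition of a cover (each $s_\beta w \prec w$ forces $\ell(s_\beta w) < \ell(w)$, so $\beta \in \inv(w)$ via~\eqref{eq:inversionandbruhat}); conversely, given $\beta \in \inv(w)$, equation~\eqref{eq:inversionandbruhat} supplies $\one \leq s_\beta w < w$, and the $\Cminus$-version of \Cref{lem:dyer}\ref{it:dyer-e} then places $\beta$ in $\Cminus{\one}{w}$. The first extremal-rays claim is immediate from \Cref{lem:dyer}\ref{it:dyer-c}. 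For $\Eplus{w}{\wo} = \Eminus{\one}{w\wo}$, the flanking conditions $s_\beta w \leq \wo$ and $\one \leq s_\beta w\wo$ are automatic, so only the cover parts matter; using the length identity $\ell(x\wo) = \ell(\wo) - \ell(x)$, the cover condition $\ell(s_\beta w) = \ell(w) + 1$ is equivalent to $\ell(s_\beta w\wo) = \ell(w\wo) - 1$. This gives $\Cplus{w}{\wo} = \Cminus{\one}{w\wo}$, and the second extremal-rays statement then follows from the first applied at $w\wo$.

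For the two ``in particular'' equalities, I would first observe that $\Cminus{\one}{w} \subseteq \cone(\Phiplus)$ is pointed and meets $\Phi$ only inside $\Phiplus$, so it suffices to show $\cone(\inv(w)) \cap \Phiplus \subseteq \inv(w)$. Here I would exploit $\inv(w) = \Phiplus \cap w(\Phiminus)$ together with the fact that both $\Phiplus$ and $w(\Phiminus)$ are \emph{saturated} in $\Phi$, \ie, $\cone(X) \cap \Phi = X$ (for $\Phiplus$ by the sign pattern of simple-root coordinates, and for $w(\Phiminus)$ by transporting this via $w$). This yields
\[
  \cone(\inv(w)) \cap \Phi \subseteq \cone(\Phiplus) \cap \cone(w(\Phiminus)) \cap \Phi = \Phiplus \cap w(\Phiminus) = \inv(w),
\]
which combined with $\inv(w) \subseteq \Cminus{\one}{w}$ from step one gives $\Cminus{\one}{w} \cap \Phi = \inv(w)$. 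The second equality drops out by combining this with $\Cplus{w}{\wo} = \Cminus{\one}{w\wo}$ from step two. The only delicate moment is the saturation/biclosure step, but it is the standard sign-pattern argument once one pins down the description $\inv(w) = \Phiplus \cap w(\Phiminus)$.
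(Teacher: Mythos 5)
Your proof is correct, and it takes a genuinely different route from the paper's. The paper's own argument establishes $\Cminus{\one}{w}=\cone\inv(w)$ indirectly: after noting $\Eminus{\one}{w}\subseteq\inv(w)$, it invokes \Cref{lem:dyer}\ref{it:dyer-a} to get $\Phiplus\subseteq \Cminus{\one}{w}\cup\Cplus{w}{\wo}$ and then rules out $\inv(w)\cap\Cplus{w}{\wo}\neq\emptyset$ via the separating functional from~\eqref{eq:separatedsets} and the partition~\eqref{eq:phiplusdecomposition}; the ``in particular'' identities are extracted from the same two tools. You instead obtain $\inv(w)\subseteq\Cminus{\one}{w}$ directly from the $\Cminus$-version of \Cref{lem:dyer}\ref{it:dyer-e} applied with $x=\one$, $y=w$ (since $\one\leq s_\beta w\leq w$ for each $\beta\in\inv(w)$), which bypasses \Cref{lem:dyer}\ref{it:dyer-a} entirely, and then you handle the intersection with $\Phi$ through the saturation facts $\cone(\Phiplus)\cap\Phi=\Phiplus$ and $\cone(w(\Phiminus))\cap\Phi=w(\Phiminus)$ together with $\inv(w)=\Phiplus\cap w(\Phiminus)$, rather than through a separating hyperplane. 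Your identity $\Eplus{w}{\wo}=\Eminus{\one}{w\wo}$ via $\ell(x\wo)=\ell(\wo)-\ell(x)$ matches what the paper implicitly uses. The trade-off is that the paper leans on the more global duality statement in \Cref{lem:dyer}\ref{it:dyer-a}, while your proof is more local and self-contained, using only \Cref{lem:dyer}\ref{it:dyer-c} and \ref{it:dyer-e} plus elementary coordinate and $W$-equivariance facts about the root system; both are clean and short.
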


\begin{proof}
  It is immediate from~\eqref{eq:inversionandbruhat} that $\Eminus{\one}{w} \subseteq \inv(w)$, and thus $\Eplus{w}{\wo} = \Eminus{\one}{w\cdot\wo} \subseteq \inv(w\cdot\wo)$.
  The statements then follow with~\eqref{eq:separatedsets} and~\eqref{eq:phiplusdecomposition} from \Cref{lem:dyer}\ref{it:dyer-c} and~\ref{it:dyer-a}.
\end{proof}

We provide the following additional properties needed in subsequent considerations.

\begin{theorem}
  \label{cor:bruhatcone}
  Let $x,y \in W$ and $s \in \sref$  such that $x \leq y \prec sy$.
  Then
  \[
  \Cplus{x}{sy} \subseteq \Cplus{x}{y} + \RRpos(\alpha_s).
  \]
\end{theorem}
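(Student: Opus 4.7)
The plan is to show that every generating ray of $\Cplus{x}{sy}$, that is, every $\beta \in \Eplus{x}{sy}$, lies in $\Cplus{x}{y} + \RRpos(\alpha_s)$. Two cases are immediate: if $s_\beta x \leq y$ then $\beta \in \Eplus{x}{y} \subseteq \Cplus{x}{y}$, and if $\beta = \alpha_s$ then $\beta \in \RRpos(\alpha_s)$. The remaining (and main) case is $s_\beta x \not\leq y$ with $\beta \neq \alpha_s$, and the argument here has two phases: first establishing the structural Bruhat diamond, then upgrading an inclusion from $\RR(\alpha_s)$ to $\RRpos(\alpha_s)$.

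For the structural phase, I would apply the lifting property \Cref{lem:liftingproperty} to $s_\beta x \leq sy$ together with the cover $y \prec sy$ (rewritten as $s \cdot sy \prec sy$). The failure $s_\beta x \not\leq y$ rules out the option $s_\beta x \prec s \cdot s_\beta x$, forcing $s \cdot s_\beta x \prec s_\beta x$ together with $s \cdot s_\beta x \leq y$. The Bruhat diamond \Cref{lem:bruhatdiamond} applied to $x \prec s_\beta x$ then shows that the alternative $x \prec sx$ would produce $s_\beta x \prec s \cdot s_\beta x$, contradicting the preceding; so $sx \prec x$. Using the conjugation identity $s \cdot s_\beta \cdot s = s_{s(\beta)}$ we rewrite $s \cdot s_\beta x = s_{s(\beta)} \cdot sx$, which exhibits $s(\beta)$ as the label of the cover $sx \prec s_{s(\beta)} \cdot sx \leq y$, so $s(\beta) \in \Eplus{sx}{y} \subseteq \Cplus{sx}{y}$. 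Since $\alpha_s \in \Eplus{sx}{y}$ as well (through the cover $sx \prec x \leq y$), Dyer's \Cref{lem:dyer}\ref{it:dyer-d} yields $\Cplus{sx}{y} \subseteq \Cplus{x}{y} + \RR(\alpha_s)$, and therefore $s(\beta) \in \Cplus{x}{y} + \RR(\alpha_s)$. Combined with $\beta = s(\beta) + c\alpha_s$ for $c = \langle \beta, \alpha_s^\vee \rangle$, this gives $\beta \in \Cplus{x}{y} + \RR(\alpha_s)$.

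The main obstacle I expect is the final step of promoting $\RR(\alpha_s)$ to $\RRpos(\alpha_s)$: the $\alpha_s$-coefficient in the decomposition inherited from Dyer's lemma may well be negative. To handle this I would split on whether $s_{s(\beta)} x \leq y$. If so, a second application of the Bruhat diamond at $x$ (using $s_{s(\beta)} \neq s_\beta$ since $\beta \neq \alpha_s$) together with Dyer's \Cref{lem:dyer}\ref{it:dyer-e} places $s(\beta)$ itself into $\Cplus{x}{y}$, and the sign of $c$ can then be controlled by the root-system identity relating $\beta$, $s(\beta)$, and $\alpha_s$. If $s_{s(\beta)} x \not\leq y$, I would invoke the extended diamond \Cref{lem:extendeddiamond} applied to $(sx, y)$ with $r = s_{s(\beta)}$, which produces the interval isomorphism $[s \cdot s_\beta x, y] \cong [s_\beta x, sy]$ and allows reduction to a smaller instance of the theorem by induction on $|[x,y]|$ (the base case $x = y$ being trivial since $\Cplus{x}{sx} = \RRpos(\alpha_s)$). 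In either sub-case the point is that the ``$\alpha_s$-free part'' of $\beta$ in the simple root basis, $\beta - c_s \alpha_s$ with $c_s \geq 0$, lies in $\Cplus{x}{y}$, producing the desired decomposition $\beta = (\beta - c_s \alpha_s) + c_s \alpha_s$ with non-negative $\alpha_s$-coefficient.
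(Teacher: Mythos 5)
Your first phase is correct and parallels the paper's reduction: dispatching the trivial cases, using the lifting property to obtain $s\cdot s_\beta x \prec s_\beta x$ and $s\cdot s_\beta x \leq y$, using the Bruhat diamond to force $sx \prec x$, obtaining $s(\beta) \in \Eplus{sx}{y}$, and then \Cref{lem:dyer}\ref{it:dyer-d} yields $\beta \in \Cplus{x}{y} + \RR(\alpha_s)$. This is essentially the setup that the paper packages into \Cref{prop:bruhatcone}, to which it then defers.

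The gap is precisely where you anticipate it: upgrading $\RR(\alpha_s)$ to $\RRpos(\alpha_s)$ is the entire content of \Cref{prop:bruhatcone}, and your sketch for this step does not work. The coefficient $c = \langle \beta, \alpha_s^\vee\rangle$ is not controlled to be non-negative by any root-system identity --- it is negative as soon as $\beta$ and $\alpha_s$ are obtuse (already $\beta = \alpha_2$, $s = s_1$ in type $A_2$), so knowing $s(\beta) \in \Cplus{x}{y}$ would not determine the sign. The hypothesis $x \leq s_{s(\beta)}x$ you would need to invoke \Cref{lem:dyer}\ref{it:dyer-e} is never verified. Your case split on $s_{s(\beta)}x \leq y$ also does not mesh with \Cref{lem:extendeddiamond} applied at $(sx,y)$: its extra hypothesis is $s_\beta x \not\leq y$, which you already have regardless, and the resulting isomorphism $[s\cdot s_\beta x,\, y] \cong [s_\beta x,\, sy]$ relates intervals of the same rank, so the proposed induction on $|[x,y]|$ has no visibly decreasing measure nor a clearly stated inductive claim. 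Finally, the assertion that $\beta - c_s\alpha_s$ lies in $\Cplus{x}{y}$ is unsubstantiated; this vector is generally not a root and no reason is given for it to lie in the Bruhat cone. The paper's actual positivity argument (the proof of \Cref{prop:bruhatcone}) is a genuinely two-dimensional geometric argument inside the plane $P = \langle \alpha_s, \beta\rangle_\RR$: it uses the interval isomorphism to write $\Cplus{rx}{y} = s(\Cplus{srx}{sy})$, deduces $\Cplus{rx}{y}\cap P \subseteq \RRpos\{\alpha_s,\beta\}$ from $\alpha_s$ being a simple ray of $\Cplus{rx}{sy}$ that does not lie in $\Cplus{rx}{y}$, and combines this with the containment $\Cplus{sx}{y} \subseteq \Cplus{srx}{sy} + \RR(s(\beta))$ and the positivity of the roots spanning $\Cplus{sx}{y}$ to extract the sign. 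None of that is captured by your sketch.
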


\begin{corollary}
\label{lem:extendingcovercones}
  Let $x,y \in W$ and $s \in \sref$ such that $sx \prec x \leq y$.
  Let furthermore $\tau \in \{ sy, y \}$ be the Bruhat smaller element.
  Then
  \[
    s\big( \Cplus{x}{y} \big) \subseteq \Cplus{sx}{\tau}.
  \]
\end{corollary}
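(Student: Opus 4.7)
The plan is to reduce to atoms: by linearity of $s$, it suffices to prove $s(\beta) \in \Cplus{sx}{\tau}$ for each $\beta \in \Eplus{x}{y}$. Since $\beta = \alpha_s$ would force $x \prec s_\beta x = sx$ and contradict $sx \prec x$, the image $\gamma := s(\beta)$ is a positive root. Setting $u := s_\beta x$, the conjugation identity $ss_\beta s = s_\gamma$ gives $s_\gamma(sx) = su$. The argument then splits on whether $s$ is a left descent of $u$.

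If $su \prec u$, then $\ell(su) = \ell(sx)+1$, so $sx \prec s_\gamma(sx) = su$ is a cover labeled by $\gamma$. The inequality $su \leq \tau$ is immediate if $\tau = y$ (via $su \prec u \leq y$), and follows from \Cref{lem:liftingproperty} applied to $u \leq y$, $su \prec u$, $sy \prec y$ if $\tau = sy$. Hence $\gamma \in \Eplus{sx}{\tau} \subseteq \Cplus{sx}{\tau}$.

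The harder case is $u \prec su$, where $s_\gamma(sx) = su$ sits three covers above $sx$ and may fail to lie below $\tau$. The condition $u \prec su$ forces both pairings $\langle \beta, \alpha_s^\vee \rangle$ and $\langle \alpha_s, \beta^\vee \rangle$ to be strictly negative, so setting $c := -\langle \beta, \alpha_s^\vee \rangle$ and $d := -\langle \alpha_s, \beta^\vee \rangle$ yields positive integers with $cd \geq 1$ in finite type, and $\gamma = \beta + c\alpha_s$ together with $\delta := s_\beta(\alpha_s) = \alpha_s + d\beta \in \Phiplus$. A further application of \Cref{lem:liftingproperty} shows $x \leq \tau$, so $\alpha_s \in \Eplus{sx}{\tau}$. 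I then locate a second atom of $[sx,\tau]$ according to whether $\beta \in \inv(sx)$: if $\beta \notin \inv(sx)$, the elements $s_\beta sx$ and $u$ differ by a right multiplication by $s$, giving a cover chain $sx \prec s_\beta sx \prec u \leq y$, and lifting (when $\tau = sy$) extends this to $s_\beta sx \leq \tau$, so $\beta \in \Eplus{sx}{\tau}$ and $\gamma = \beta + c\alpha_s$ sits in $\Cplus{sx}{\tau}$; if $\beta \in \inv(sx)$, the element $s_\delta sx = s_\beta(s_\gamma x)$ is checked to be a cover of $sx$ sitting below $u$, placing $\delta \in \Eplus{sx}{\tau}$, and the identity $\gamma = \tfrac{1}{d}\delta + (c - \tfrac{1}{d})\alpha_s$ with $c - \tfrac{1}{d} \geq 0$ completes the argument.

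The main obstacle is locating this second atom of $[sx,\tau]$ in the case $u \prec su$; the analysis relies on the rank-two structure of the pair $(\beta, \alpha_s)$ and on carefully using \Cref{lem:liftingproperty} to keep the chosen cover of $sx$ below $\tau$ even when $\tau = sy$.
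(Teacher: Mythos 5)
Your route is genuinely different from the paper's: the paper first establishes the auxiliary containment $s(\beta) \in \Cplus{sx}{s_\beta x}$ for every $\beta \in \Eplus{x}{y}$ by applying \Cref{cor:bruhatcone} to $sx \leq s_\beta x \prec s(s_\beta x)$ (absorbing the $\RRpos(\alpha_s)$ summand since $\alpha_s$ is already an atom label of $[sx, s_\beta x]$), and then handles $\tau = sy$ by a three-way case split on whether $x \leq sy$ and whether $s_\beta x \leq sy$, using \Cref{cor:liftingproperty} and a reduced-word argument. Your approach instead works entirely inside the rank-two subsystem spanned by $\alpha_s$ and $\beta$ and tries to exhibit explicit atoms of $[sx,\tau]$ in whose cone $\gamma$ lies.

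There are, however, genuine gaps. First, the assertion that the elements $s_\beta sx$ and $u = s_\beta x$ ``differ by right multiplication by $s$'' is incorrect: $(s_\beta sx)^{-1}u = x^{-1}sx$, and equivalently $s_\beta sx = s_\delta u$, so they differ by a reflection that is generally not simple. This invalidates the mechanism you use to compare lengths and to apply \Cref{lem:liftingproperty} in that sub-case; the claimed chain $sx \prec s_\beta sx \prec u$ and the assertion that $s_\delta sx$ covers $sx$ and lies below $u$ are both nontrivial and left unproved. Second, the inequality $cd \geq 1$, which is the crux of the final step $\gamma = \tfrac{1}{d}\delta + (c-\tfrac{1}{d})\alpha_s \in \cone\{\delta,\alpha_s\}$, is only automatic for crystallographic root systems (where $c,d$ are positive integers). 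The paper works over arbitrary finite Coxeter systems, including $H_3$, $H_4$, $I_2(m)$, and in a rank-two subsystem the quantity $cd = 4\cos^2\theta$ can drop below $1$ for obtuse angles $\theta$ with $\pi/2 < \theta < 2\pi/3$. You would need a separate argument ruling this configuration out in the relevant sub-case, and none is given. Finally, the entire approach bypasses \Cref{cor:bruhatcone}, which is what makes the paper's proof short; if you want an independent argument, the rank-two obstructions above are exactly what \Cref{cor:bruhatcone} is designed to sidestep.
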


We also need this corollary for the weak order, so we also provide that version individually.

\begin{corollary}
\label{cor:weakordercplus}
  Let $x,y \in W$ and $s \in \sref$ such that $xs \prec x \leq y$.
  Let furthermore $\tau \in \{ ys, y \}$ be the Bruhat smaller element.
  Then
  \[
    \Cplus{x}{y} \subseteq \Cplus{xs}{\tau}.
  \]
\end{corollary}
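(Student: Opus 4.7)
The plan is to prove the stronger statement $\Eplus{x}{y} \subseteq \Cplus{xs}{\tau}$, which implies the cone containment since $\Cplus{x}{y} = \cone \Eplus{x}{y}$. Fix $\beta \in \Eplus{x}{y}$ and put $v := s_\beta x$, so $\beta \in \Phiplus$ and $x \prec v \leq y$. First I would verify $xs \prec s_\beta xs$: from $xs \prec x$ we know $x(\alpha_s) \in \Phiminus$, so the sets $x(\Phiplus)$ and $xs(\Phiplus)$ coincide on $\Phiplus$ (they differ only in the sign of the negative root $x(\alpha_s)$); thus $\beta \in x(\Phiplus) \cap \Phiplus$ (equivalent to $x \prec s_\beta x$) forces $\beta \in xs(\Phiplus)$, hence the cover $xs \prec s_\beta xs$.

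By \Cref{lem:dyer}\ref{it:dyer-e} it then suffices to show $s_\beta xs = vs \leq \tau$, and I would split according to the $s$-type of $v$. If $vs \prec v$, then $vs \leq v \leq y$: in Case A ($\tau = y$, i.e.\ $y \prec ys$) this is already $vs \leq \tau$, while in Case B ($\tau = ys$, i.e.\ $ys \prec y$) the shared $s$-descent of $v$ and $y$ together with the standard equivalence ``$v \leq y$ iff $vs \leq ys$'' for elements of common $s$-descent gives $vs \leq ys = \tau$. If instead $v \prec vs$, lifting (\Cref{lem:liftingproperty}) in its right-multiplication form, applied to the triple $(v, \max(y, ys), s)$ — noting that $\max(y, ys)$ always has $s$-descent to its counterpart — still delivers $vs \leq \max(y, ys)$ and $v \leq \max(y, ys)\cdot s$, so that $\beta \in \Cplus{xs}{\max(y, ys)}$ by \Cref{lem:dyer}\ref{it:dyer-e}; however $vs \leq \tau = \min(y, ys)$ is not always attainable.

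The step I expect to be the main obstacle is closing this gap in the sub-case $v \prec vs$ with $vs \not\leq \tau$, the prototypical instance being the rank-one setup $v = y$ in Case A, where $vs = ys \not\leq y = \tau$. In such cases $\beta$ is not an atom label of $[xs, \tau]$ and must be realised as a non-negative combination of such atom labels. My plan is to exploit the rank-two Bruhat diamond with bottom $xs$ and top $\max(y, ys)$: its two atom labels $\beta_0 := xs(\alpha_s)$ and a sibling label $\beta_2$ generate a two-dimensional sub-root system in which $\beta$ appears as the reflected root $\beta = s_{\beta_0}(\beta_2) = \beta_2 - \langle \beta_2, \beta_0^\vee \rangle \beta_0$; the non-positivity of the crystallographic pairing of a pair of simple roots in a finite rank-two sub-root system then places $\beta \in \cone(\beta_0, \beta_2)$. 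What remains is to verify that both $\beta_0$ and $\beta_2$ actually lie in $\Cplus{xs}{\tau}$ — for $\beta_0$ this is immediate from $s_{\beta_0} xs = x \leq \tau$ and \Cref{lem:dyer}\ref{it:dyer-e}, and for $\beta_2$ one traces the diamond and shows $s_{\beta_2} xs \leq \tau$ — and to extend this rank-two decomposition to higher-rank intervals by induction on $\ell(y) - \ell(x)$, descending from $\max(y, ys)$ to $\tau$ via a right-multiplicative analogue of \Cref{cor:bruhatcone} which collapses the extra direction $\RRpos\cdot \tau(\alpha_s)$ once it is established that $\tau(\alpha_s)$ itself already lies in $\Cplus{xs}{\tau}$.
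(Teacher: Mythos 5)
Your argument is clean through the easy case: the claim $xs < s_\beta xs$ is correct (since $\beta \in \Phiplus \cap x(\Phiplus) = \Phiplus \cap xs(\Phiplus)$), and when $vs \prec v$ the lifting lemma (applied to $vs \leq y$, $vs \prec v$, $ys \prec y$ when $\tau = ys$) does give $vs \leq \tau$, placing $\beta \in \Eplus{xs}{\tau}$ by \Cref{lem:dyer}\ref{it:dyer-e}. The genuine gap is exactly where you flag it: when $v \prec vs$ and $vs \not\leq \tau$, your rank-two strategy is only a plan. The ``diamond with bottom $xs$ and top $\max(y,ys)$'' is not a diamond in general (that interval has arbitrary length), the companion label $\beta_2$ is never pinned down, there is no argument that $\beta_2 \in \Cplus{xs}{\tau}$, and the proposed induction requires a ``right-multiplicative analogue of \Cref{cor:bruhatcone}'' which is not available in the paper and is itself of the same order of difficulty as what you are trying to prove. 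As it stands the hard case is not closed.

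The paper takes a different and much shorter route: it first proves the equivariance identity $\Cplus{x^{-1}}{y^{-1}} = x^{-1}\big(\Cplus{x}{y}\big)$, which follows immediately from $\Eplus{x^{-1}}{y^{-1}} = x^{-1}\big(\Eplus{x}{y}\big)$ (note $x^{-1}(\beta) \in \Phiplus$ whenever $x \prec s_\beta x$, so no absolute values are needed). With that identity, the right-multiplication statement you want is transported to the left-multiplication statement \Cref{lem:extendingcovercones}, which has already been proved: $sx^{-1} \prec x^{-1} \leq y^{-1}$, so $s x^{-1}(\beta) \in \Cplus{sx^{-1}}{\tau^{-1}}$, and applying the identity again yields $\beta = (xs)(sx^{-1}(\beta)) \in \Cplus{xs}{\tau}$. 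This completely sidesteps the hard case by routing all the rank-two/induction work through \Cref{prop:bruhatcone}, \Cref{cor:bruhatcone} and \Cref{lem:extendingcovercones}, which have been done once on the left. Trying to redo them from scratch on the right, as your plan would require, is precisely the obstacle you are running into.
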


The first step in the proof is to strengthen \Cref{lem:dyer}\ref{it:dyer-d} for simple roots in $\Eplus{x}{y}$ as follows.

\begin{proposition}
  \label{prop:bruhatcone}
  Let $x,y \in W$, $s\in \sref$ and $r = s_\beta \in \tref$ such that $s \neq r$ and $x \prec sx, rx \leq y$.
  Then
  \[
    s(\beta) \in \Cplus{sx}{y} + \RRpos(\alpha_s).
  \]
\end{proposition}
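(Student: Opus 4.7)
The plan is to prove the inclusion via case analysis on whether $srx \leq y$. First I would observe that since $r = s_\beta \neq s$, we have $\beta \neq \alpha_s$, so $s(\beta) \in \Phiplus$. Applying \Cref{lem:bruhatdiamond} to the covers $x \prec sx$ and $x \prec rx$ yields $sx \prec srx$, and the conjugation identity $s_{s(\beta)} = s s_\beta s$ gives $s_{s(\beta)} \cdot sx = s s_\beta s \cdot sx = s \cdot s_\beta x = srx$, so the cover $sx \prec srx$ is labeled precisely by the positive root $s(\beta)$.

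In the easy case $srx \leq y$, the claim is immediate: $s(\beta) \in \Eplus{sx}{y} \subseteq \Cplus{sx}{y}$, which is contained in $\Cplus{sx}{y} + \RRpos(\alpha_s)$. In the harder case $srx \not\leq y$, I would invoke \Cref{lem:extendeddiamond} to obtain $y \prec sy$, $srx \leq sy$, and the order isomorphism $[rx,y] \cong [srx,sy]$ given by $w \mapsto sw$. Under this isomorphism, atoms correspond and their labels get transformed by $s$, giving $\Cplus{srx}{sy} = s(\Cplus{rx}{y})$ (noting that $\alpha_s \notin \Eplus{rx}{y}$, as otherwise $srx \leq y$, a contradiction). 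Combining this with \Cref{lem:dyer}\ref{it:dyer-d} applied inside the larger interval $[sx, sy]$, together with the observation that the cover $y \prec sy$ is labeled by $\alpha_s$, one decomposes $s(\beta)$ as $c + \mu \alpha_s$ with $c \in \Cplus{sx}{y}$ and $\mu \geq 0$.

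The main obstacle is the second case: the cover $sx \prec srx$ labeled $s(\beta)$ lies naturally in $\Cplus{sx}{sy}$ rather than $\Cplus{sx}{y}$, and the $\alpha_s$-correction required to project it into the smaller cone must be shown to come with a non-negative coefficient. This positivity is forced by the Bruhat positioning $y \prec sy$ together with $srx \not\leq y$, which uniquely determines the direction of the correction. The delicate interplay between \Cref{lem:extendeddiamond} and the sign behaviour of the simple reflection $s$ on positive roots is the technical heart of the proof.
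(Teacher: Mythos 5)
Your framework matches the paper's: the case split on whether $srx \leq y$, the easy case via $s(\beta) \in \Eplus{sx}{y}$, the invocation of \Cref{lem:extendeddiamond} to get $y \prec sy$, $srx \leq sy$ and the isomorphism $[rx,y] \cong [srx,sy]$, the resulting identity $\Cplus{rx}{y} = s(\Cplus{srx}{sy})$, and the use of \Cref{lem:dyer}\ref{it:dyer-d} inside $[sx,sy]$ to relate $\Cplus{sx}{sy}$ to $\Cplus{srx}{sy} + \RR(s(\beta))$. So the ingredients are all correctly identified.

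The gap is exactly the step you flag as the ``technical heart'' and then do not actually carry out. You write ``one decomposes $s(\beta)$ as $c + \mu\alpha_s$ with $c \in \Cplus{sx}{y}$ and $\mu \geq 0$,'' and justify $\mu \geq 0$ only by the sentence ``This positivity is forced by the Bruhat positioning $y \prec sy$ together with $srx \not\leq y$, which uniquely determines the direction of the correction.'' That is an assertion, not an argument: \Cref{lem:dyer}\ref{it:dyer-d} only produces an $\RR$-correction, and nothing in the quoted Bruhat data by itself tells you the sign. The paper closes this gap with a genuinely planar argument inside $P = \langle\alpha_s, \beta\rangle_{\RR}$. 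Concretely, it first shows $\alpha_s \notin \Cplus{rx}{y}$ (using $\alpha_s \in \Eplus{rx}{sy}$ and $\beta \notin \Cplus{rx}{sy}$ from parts~\ref{it:dyer-a} and~\ref{it:dyer-c} of \Cref{lem:dyer}), and, because $\alpha_s$ is \emph{simple}, deduces the two-dimensional inclusion $\Cplus{rx}{y} \cap P \subseteq \RRpos\{\alpha_s,\beta\}$. Applying $s$ via the identity $\Cplus{rx}{y} = s(\Cplus{srx}{sy})$ turns this into $s(\beta) \in (\Cplus{srx}{sy} \cap P) + \RRpos(\alpha_s)$ — this is where the half-line $\RRpos(\alpha_s)$ rather than the full line $\RR(\alpha_s)$ appears, coming from the sign flip $s(\alpha_s) = -\alpha_s$. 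One then needs a non-trivial vector of $\Cplus{sx}{y}$ inside $P$, which the paper gets from $\beta \in \Cplus{x}{y} \subseteq \Cplus{sx}{y} + \RR(\alpha_s)$, and finally combines with $\Cplus{sx}{y} \subseteq \Cplus{srx}{sy} + \RR(s(\beta))$ and the fact that $\Cplus{sx}{y}$ is spanned by positive roots to land in $(\Cplus{sx}{y} \cap P) + \RRpos(\alpha_s)$. None of this — in particular the role of simpleness of $\alpha_s$ and the restriction to the plane $P$ — is present in your sketch, and without it the sign of $\mu$ is unjustified.
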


\begin{proof}
  By \Cref{lem:bruhatdiamond} we have $x \prec sx, rx \prec srx$ with $srx = s_{s(\beta)} sx$.

  If $srx \leq y$ then
  $
    s(\beta) \in \Cplus{sx}{y} \subseteq \Cplus{sx}{y} + \RRpos(\alpha_s)
  $
  by definition, so we have to consider the situation $srx \not\leq y$.
  In this case $s( \beta ) \not\in \Cplus{sx}{y}$, and the situation with their cover labels is
  \begin{center}
  \begin{tikzpicture}[scale=0.5]
  \node (x) at (0,0) {$x$};
  \node (sx) at (-2,2) {$sx$};
  \node (rx) at (2,2) {$rx$};
  \node (srx) at (0,4) {$srx$};
  \node (y) at (0,6) {$y$};
  \node (sy) at (-2,8) {$sy$};
  
  \draw (x) -- (sx) -- (srx) -- (rx) -- (x);
  \draw[dotted] (sx) -- (y) -- (rx);
  \draw[dotted] (srx) -- (sy);
  \draw (y) -- (sy);
  \draw[decorate, decoration={snake}] (srx) -- (y);
  
  \node (a1) at (-1.3,0.8) {$\alpha_s$};
  \node (a2) at (0.8,2.8) {$\alpha_s$};
  \node (a3) at (-0.6,7.2) {$\alpha_s$};
  \node (b) at (1.3,0.8) {$\beta$};
  \node (sb) at (-0.4,2.8) {$s(\beta)$};
  \end{tikzpicture}
  \end{center}
  
  We may thus apply \Cref{lem:extendeddiamond} and obtain $y \prec sy$, $srx \leq sy$ and $[rx,y] \cong [srx,sy]$ given by $w \mapsto sw$, and get
  \begin{equation}
  \label{eq:stransform}
    \Cplus{rx}{y} = s \big(\Cplus{srx}{sy}\big). \tag{$\ast$}
  \end{equation}
  By \Cref{lem:dyer}\ref{it:dyer-a} we have $\beta \not\in \Cplus{rx}{sy}$, and by \Cref{lem:dyer}\ref{it:dyer-c} we have $\alpha_s \in \Cplus{rx}{sy}$ is a ray, implying in particular $\alpha_s \not\in \Cplus{rx}{y}$.
  Because $\alpha_s \in \Delta$ is simple and because $\Cplus{rx}{y} \subseteq \Cplus{rx}{sy}$ we obtain
  \[
    \Cplus{rx}{y} \cap P \subseteq \RRpos\{ \alpha_s, \beta \},
  \]
  where the situation is considered inside the two-dimensional plane $P = \langle \alpha_s, \beta \rangle_\RR$.
  This situation is sketched as
  \begin{center}
    \begin{tikzpicture}[scale=0.7]
      \draw[fill=black, black, opacity=0.1] (0,0) -- (-6,3) -- (-6,5) -- (3.32,5) -- (0,0);
      \draw[fill=black, black, opacity=0.1] (0,0) -- (-3.32,5) -- (3.32,5) -- (0,0);

      \draw (0,2.7) arc(90:57:2.7);
      \draw (0,2.7) arc(90:123:2.7);
      
      \draw (0,4.9) arc(90:57:4.9);
      \draw (0,4.9) arc(90:153:4.9);

      \node (a) at (-4,2) {$\alpha_s$};
      \node (b) at (3,3) {$\beta$};
      \draw[->] (0,0) -- (a);
      \draw[->] (0,0) -- (b);

      \node (C1) at (-3.6,4.6) {$\Cplus{rx}{sy}$};
      \node (C2) at (0,3) {$s(\Cplus{srx}{sy})$};
      \node (C3) at (0,2.05) {$=\Cplus{rx}{y}$};
    \end{tikzpicture}
  \vspace*{-15pt}
  \end{center}
  Together with \eqref{eq:stransform}, this implies
  \begin{equation}
  \label{eq:pplane}
    s(\beta) \in \big(\Cplus{srx}{sy} \cap P \big) + \RRpos( \alpha_s ). \tag{$\ast\ast$}
  \end{equation}
  By the assumption $x \prec rx \leq y$, we know $\beta \in \Cplus{x}{y}$.
  Together with \Cref{lem:dyer}\ref{it:dyer-d}, this implies
  \begin{equation}
  \label{eq:nontrivialpoint}
    \beta \in \Cplus{x}{y} \subseteq \Cplus{sx}{y} + \RR (\alpha_s). \tag{$\ast\ast$$\ast$}
  \end{equation}
  The cone $\Cplus{sx}{y}$ therefore intersects the plane~$P$ non-trivially, $\Cplus{sx}{y} \cap P \neq \{0\}$.
  We have $\Cplus{sx}{y} \subseteq \Cplus{sx}{sy}$ by definition and we have $\Cplus{sx}{sy} \subseteq \Cplus{srx}{sy} + \RR ( s(\beta) )$ by \Cref{lem:dyer}\ref{it:dyer-d}.
  So we obtain $\Cplus{sx}{y} \subseteq \Cplus{srx}{sy} + \RR ( s(\beta) )$, sketched as
  \begin{center}
    \begin{tikzpicture}[scale=0.7]
      \draw[fill=black, black, opacity=0.1] (0,0) -- (0.714,5) -- (4,5) -- (4,-3) -- (-0.429,-3) -- (0,0);
      \draw[fill=black, black, opacity=0.1] (0,0) -- (4,0.222) -- (4,5) -- (1.765,5)  -- (0,0);

      \draw (2.5,2.5) arc(45:3:3.535);
      \draw (2.5,2.5) arc(45:71:3.535);

      \draw (1,1) arc(45:-98:1.414);
      \draw (1,1) arc(45:83:1.414);

      \node (a) at (-4,2) {$\alpha_s$};
      \node (sb) at (0.6,4.2) {$s(\beta)$};
      \draw[->] (0,0) -- (a);
      \draw[->] (0,0) -- (sb);

      \node (C1) at (2.4,2) {$\Cplus{srx}{sy}$};
      \node (C2) at (1.3,-2) {$\Cplus{srx}{sy} + \RR(s(\beta))$};
    \end{tikzpicture}
  \end{center}
  Since $\Cplus{sx}{y}$ is spanned by positive roots, \eqref{eq:pplane} and~\eqref{eq:nontrivialpoint} finally yield
  \[
    s(\beta) \in \big(\Cplus{sx}{y} \cap P \big) + \RRpos(\alpha_s) \subseteq \Cplus{sx}{y} + \RRpos(\alpha_s). \qedhere
  \]
\end{proof}

The following example details the situation in the proof of \Cref{prop:bruhatcone}.

\begin{example}[Type $B_3$]
  Let $x = 13231$ and $r = s_{\del{122}}$.
  Then
  \[
    s_2 x = 213231,\quad rx = 132312,\quad s_2rx = 2132312
  \]
  and we have $x \prec s_2x,rx \prec s_2rx$ in accordance with \Cref{lem:bruhatdiamond}.
  Let $y = 32312321$ so that $s_2x, rx \leq y$ while $s_2rx \not\leq y$.
  We draw the interval $[x, s_2y]$ where crucial cover relations and elements are drawn in black while others are drawn in light grey for better readability.
  \[
  \begin{tikzpicture}[scale=1.0]
  \node (x)   at (3,0)   {$x = 13231$};
  \node (sx)  at (-0.5,2) {$s_2 x = 213231$};
  \node (tx)  at (4.4,2) {$rx = 132312$};
  \node (stx) at (6,4)   {$s_2 rx = 2132312$};
  \node (c1)  at (-2,4)   {$3213231$};
  \node (c2)  at (-4,4)   {$2312321$};
  
  \node (c3)  at (1,4)   {$3212321$};
  \node (c4)  at (3,4)   {$3231232$};
  
  \node (y)   at (-0.5,6)   {$y = 32312321$};
  \node (d1)  at (3,6)   {$23212321$};
  \node (d2)  at (6,6)   {$23231232$};
  \node (sy)  at (3,8)   {$s_2 y = 232312321$};
  
  \node[lightgrey] (gr1) at (1.3,2) {$123231$};
  \node[lightgrey] (gr2) at (2.7,2) {$323123$};
  
  \draw[lightgrey] (x) -- (gr1);
  \draw[lightgrey] (x) -- (gr2);
  \draw[lightgrey] (gr1) -- (c2);
  \draw[lightgrey] (gr1) -- (c3);
  \draw[lightgrey] (gr2) -- (c1);
  \draw[lightgrey] (gr2) -- (c4);

  \draw[lightgrey] (c3) -- (d1);
  \draw[lightgrey] (c4) -- (d2);
  \draw[lightgrey] (c1) -- (d2);
  \draw[lightgrey] (c2) -- (d1);
  
  \draw (x)   -- (sx) -- (stx) -- (tx) -- (x);
  \draw (sx)  -- (c1) -- (y)   -- (c2) -- (sx);
  \draw (stx) -- (d1) -- (sy)  -- (d2) -- (stx);
  \draw (y)   -- (sy);
  \draw (tx) -- (c3) -- (y) -- (c4) -- (tx);
  
  \node (b1) at (1.95,0.9) {$\del{010}$};
  \node (b2) at (5.8,3)   {$\del{010}$};
  \node (b3) at (0.65,7)     {$\del{010}$};
  
  \node (a1) at (-2.7,3)   {$\del{100}$};
  \node (a2) at (5.1,7)     {$\del{100}$};
  \node (a3) at (5,5.1)   {$\del{100}$};
  \node (a4) at (-1.35,5.1) {$\del{100}$};
  
  \node (g1) at (-0.85,3)    {$\del{001}$};
  \node (g2) at (6.4,5.1)  {$\del{001}$};
  \node (g3) at (3.4,7)    {$\del{001}$};
  \node (g4) at (-2.7,5.1) {$\del{001}$};
  
  \node (o1) at (1,2.7) {$\del{112}$};
  \node (o1) at (4.2,0.9) {$\del{122}$};
  
  \node (ab1) at (-0.1,5.1)    {$\del{110}$};
  \node (ab2) at (4.15,2.95)    {$\del{110}$};
  
  \node (bg1) at (1.55,5.1)    {$\del{011}$}; 
  \node (bg2) at (1.7,3.35)    {$\del{011}$};
  
  \end{tikzpicture}
  \]
  We see $[rx, y] \cong [s_2rx, s_2y]$ realized by $w \mapsto s_2w$ as predicted by \Cref{lem:extendeddiamond} and furthermore
  \[
    \Cplus{rx}{y} = \cone\{\del{110}, \del{011}\} = s_2 \big(\cone\{\del{100}, \del{001}\}\big) = s_2 \big(\Cplus{s_2 rx}{s_2 y}\big).
  \]
  The plane ${P = \langle\del{010}, \del{122}\rangle_\RR}$ contains the sub root system of type $A_2$ with simple roots $\Delta_P = \{ \del{010}, \del{112} \}$ and we have $\Phiplus \cap P = \{\del{010}, \del{122}, \del{112}\}$
  \[
  \begin{tikzpicture}[scale=0.5]
  \draw     (1,0)  -- (21,0);
  
  \node (j1) at (2,2) {$\del{010}$};
  \node (j2) at (6,6) {$\del{122}$};
  \node (j3) at (10,4) {$\del{112}$};
  \node[blue] (j4) at (14,2) {$\del{102}$};
  
  \draw[->] (6,0) -- (j1);
  \draw[->] (6,0) -- (j2);
  \draw[->] (6,0) -- (j3);
  
  \node[red] (i1) at (6,8) {$\Cplus{s_2 rx}{s_2 y} \cap P$};
  \draw[red]            (j2) -- (i1);
  \draw[red]            (i1) -- (6,8.5);
  \draw[red, dotted] (6,8.5) -- (6,9);
  
  \node[red] (i2) at (12,6) {$\Cplus{rx}{y} \cap P$};
  \draw[red]           (j3) -- (i2);
  \draw[red]           (i2) -- (12.5,6.5);
  \draw[red, dotted] (12.5,6.5) -- (13,7);
  
  \node[blue] (i3) at (18,3) {$\Cplus{s_2 x}{y} \cap P$};
  \draw[blue] (6,0) -- (j4) -- (i3) -- (19,3.25);
  \draw[blue, dotted] (19,3.25) -- (20,3.5); 
  \end{tikzpicture}
  \]
  and see
  $
    s_2( \del{122} ) = \del{112} \in \Cplus{s_2 x}{y} + \cone\{ \del{010} \}
  $
  as predicted by \Cref{prop:bruhatcone}.
\end{example}

\begin{proof}[Proof of \Cref{cor:bruhatcone}]
  Let $r \in \tref$ with $x \prec rx \leq sy$.
  If $rx \leq y$ or $r = s$ the statement is trivial, so let $rx \not\leq y$ and $r \neq s$.
  
  As $rx \leq sy$ while $rx \not\leq y$, we have $y \prec sy$.
  Hence there is a reduced word for~$sy$ that starts with the letter~$\s$ and from which we may delete letters to obtain a reduced word for~$rx$.
  Observe that we must not delete the initial letter~$\s$ in this process, so we obtain a reduced word for~$rx$ with initial letter~$\s$.
  Now by deleting this initial~$\s$, we get $srx \prec rx$ and $srx \leq y$.

  As $r \neq s$ we have $x \neq srx$ and $x, srx \prec rx$.
  We thus obtain
  \[
    sx \prec x, srx \prec rx \text{ and } x, srx \leq y,
  \]
  and conclude with \Cref{prop:bruhatcone}.
\end{proof}

\begin{proof}[Proof of \Cref{lem:extendingcovercones}]
  If $x = y$ we have $\Cplus{x}{y} = \{0\}$ and the statement is trivial, hence let $x < y$.
  We first prove for any $\beta \in \Eplus{x}{y}$, that 
  \begin{equation}\label{eq:extendingcovercones}
    s(\beta) \in \Cplus{sx}{s_\beta x}. \tag{$\ast$}
  \end{equation}
  We have $\ell(s_\beta x) = \ell(sx) + 2$, hence 
  \[
    \ell(s_{s(\beta)} sx) = \ell( s s_\beta x) = \ell(s_\beta x) \pm 1 = \ell(sx) + 2 \pm 1 > \ell(sx),
  \]
  and therefore by \Cref{lem:dyer}\ref{it:dyer-e}, $s(\beta) \in \Cplus{sx}{s s_\beta x}$.
  If $s s_\beta x \prec s_\beta x$ then
  \[
    s(\beta) \in \Eplus{sx}{s s_\beta x} \subseteq \Eplus{sx}{s_\beta x} \subseteq \Cplus{sx}{s_\beta x}.
  \]
  If otherwise $s_\beta x \prec s s_\beta x$ we apply \Cref{cor:bruhatcone} to obtain
  \[
    \Cplus{sx}{s s_\beta x} \subseteq \Cplus{sx}{s_\beta x} + \RRpos( \alpha_s ).
  \]
  But as $sx \prec x \prec s_\beta x$ we have $\Cplus{sx}{s_\beta x} + \RRpos( \alpha_s ) = \Cplus{sx}{s_\beta x}$ and thus $s(\beta) \in \Cplus{sx}{s_\beta x}$.
  This concludes the proof of~\eqref{eq:extendingcovercones}.
  \medskip

  If $\tau = y$ we clearly have $\Cplus{sx}{s_\beta x} \subseteq \Cplus{sx}{y}$ for every $\beta \in \Eplus{x}{y}$, hence by \eqref{eq:extendingcovercones} we get
  \[
    s( \Cplus{x}{y} ) \subseteq \Cplus{sx}{\tau}.
  \]
  If $\tau = sy$ we consider the following cases.
  
  \textbf{Case 1:} 
  If $x \not\leq sy$ we apply \Cref{cor:liftingproperty} to $sx \prec x$ and $sy \prec y$ and obtain $[sx, sy] \cong [x,y]$, hence $s( \Cplus{x}{y} ) = \Cplus{sx}{\tau}$.
  
  \textbf{Case 2:}
  If $x \leq sy$ but $s_\beta x \not \leq sy$ take a reduced word for~$y$ that starts with~$\s$, say
  $
    \s~ \s_1 \dots \s_m,
  $
  so that $\s_1 \dots \s_m$ is a reduced word for $sy = \tau$.
  A reduced word for~$s_\beta x$ can be obtained by deleting letters different from the initial letter~$\s$.
  Therefore~$s$ is a descent of~$s_\beta x$ and furthermore $s s_\beta x \leq \tau$.
  Thus $ s( \beta ) \in \Cplus{sx}{s s_\beta x} \subseteq \Cplus{sx}{\tau}$.
  
  \textbf{Case 3:}
  If $s_\beta x \leq sy$ we apply \eqref{eq:extendingcovercones} to obtain $\s(\beta) \in \Cplus{sx}{s_\beta x} \subseteq \Cplus{sx}{\tau}$ for every $\beta \in \Eplus{x}{y}$.
\end{proof}

\begin{proof}[Proof of \Cref{cor:weakordercplus}]
  We first show the identity
  \begin{equation}\label{eq:weakordercplus}
    \Cplus{x^{-1}}{y^{-1}} = x^{-1} \big{(} \Cplus{x}{y} \big{)}. \tag{$\ast$}
  \end{equation}
  If $x=y$ the statement is trivial, hence let $x<y$ and take $\beta \in \Eplus{x}{y}$.
  We then have $x \prec s_\beta x\leq y$ or, equivalently,
  \[
    x^{-1} \prec x^{-1} s_\beta = s_{x^{-1}(\beta)} x^{-1} \leq y^{-1}.
  \]
  This implies $x^{-1}(\beta) \in \Eplus{x^{-1}}{y^{-1}}$ and we obtain \eqref{eq:weakordercplus}.
  
  \medskip
  
  Again the main statement is trivial for $x=y$, hence let $x<y$ and take $\beta \in \Eplus{x}{y}$.
  We then have similar as above
  \[
    s x^{-1} \prec x^{-1} \prec s_{x^{-1}(\beta)} x^{-1} \leq y^{-1}.
  \]
  By \Cref{lem:extendingcovercones} we get
  \[
    sx^{-1}( \beta ) \in \Cplus{sx^{-1}}{\tau^{-1}}.
  \]
  Applying \eqref{eq:weakordercplus} we finally conclude with
  \[
    xssx^{-1}( \beta ) = \beta \in \Cplus{xs}{\tau}. \qedhere
  \]
\end{proof}

\subsection{Bruhat cones and non-flipable vertices in subword complexes}
\label{sec:nonflipableverticesandbruhatcovers}

For a non-empty subword complex $\subwordComplex(\Q,w)$, we connect in this section the cone $\Cplus{w}{\demazure{\Q}}$ to the non-flippable vertices in facets of $\subwordComplex(\Q,w)$.
We then use this description to show in \Cref{prop:nonflipablerootincone} the first containment for \Cref{thm:cone_equality},
\[
  \Cplus{w}{\demazure{\Q}} \ \subseteq \ \bigcap_F \cone\Roots{F},
\]
by showing that all cover labels of atoms in the Bruhat interval $[w, \demazure{\Q}]$ are contained in all cones over root configurations.

\begin{proposition}
  \label{prop:bruhatcoversandrootfunction}
  Let $\subwordComplex(\Q,w)$ be a non-empty subword complex.
  Then
  \[
    \Eplus{w}{\demazure{\Q}} = \bigset{\Root{I}{i}}{I \text{ facet of }\subwordComplex(\Q,w) \text{ and } i \in I \text{ not flippable }}.
  \]
  Moreover, if~$i \in I$ is a flippable index in a facet~$I$ of $\subwordComplex(\Q,w)$ then $\Root{I}{i} \notin \Eplus{w}{\demazure{\Q}}$.
\end{proposition}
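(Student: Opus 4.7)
The plan is to identify, for each facet $I$ of $\subwordComplex(\Q,w)$ and each $i \in I$, the root $\Root{I}{i}$ with a cover label of a specific element above $w$ in Bruhat order. The key observation is the algebraic identity
\[
  \wordprod{\Q}{([m]\setminus I)\cup\{i\}} \;=\; v\cdot s_i\cdot u \;=\; (v s_i v^{-1})\cdot(vu) \;=\; s_{\Root{I}{i}}\cdot w,
\]
where $v = \wordprod{\Q}{\{1,\dots,i-1\}\setminus I}$ and $u = \wordprod{\Q}{\{i+1,\dots,m\}\setminus I}$, and $vu = w$ is the reduced factorisation arising from the reduced word $\Q_{[m]\setminus I}$ split at position~$i$. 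Inserting the letter at position~$i$ into the reduced word for $w$ therefore realises the element $s_{\Root{I}{i}}\, w$ as the product of a subword of~$\Q$.

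For the forward inclusion ``$\supseteq$'', assume $i \in I$ is non-flippable. Then $|\Root{I}{i}| \notin \inv(w)$ by \Cref{lem:rootsandflips}\ref{eq:rootsbyflips}, so~\eqref{eq:inversionandbruhat} gives $\ell(s_{\Root{I}{i}} w) = \ell(w)+1$. Consequently the inserted word is reduced and, in particular, its prefix $v s_i$ is reduced, so~\eqref{eq:weaklength} yields $\Root{I}{i} = v(\alpha_{s_i}) \in \Phiplus$. Combined with $s_{\Root{I}{i}} w \leq \demazure{\Q}$, which follows from~\eqref{eq:demazurestrongdef} since $s_{\Root{I}{i}} w$ is realised as a subword product of~$\Q$, this gives $\Root{I}{i} \in \Eplus{w}{\demazure{\Q}}$. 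Conversely, take $\beta \in \Eplus{w}{\demazure{\Q}}$; then $s_\beta w \leq \demazure{\Q}$ and $\ell(s_\beta w) = \ell(w)+1$, so~\eqref{eq:demazurestrongdef} produces a subword $J \subseteq [m]$ with $\Q_J$ a reduced expression for $s_\beta w$. Applying \Cref{lem:techlem} to this reduced expression and the reflection $s_\beta$, using $\ell(s_\beta \cdot s_\beta w) = \ell(w) < \ell(s_\beta w)$, singles out a unique position $i \in J$ whose removal produces a reduced expression for~$w$. Setting $I = [m]\setminus(J\setminus\{i\})$ makes~$I$ a facet of $\subwordComplex(\Q,w)$ containing~$i$, and unwinding the defining formulas identifies $\Root{I}{i}$ with the root $\beta$ provided by \Cref{lem:techlem}\ref{it:betaisinversion}. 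Finally~$i$ is non-flippable in~$I$ because $|\Root{I}{i}| = \beta \notin \inv(w)$.

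For the ``moreover'' assertion, suppose $i \in I$ is flippable. Then $|\Root{I}{i}| \in \inv(w)$ by \Cref{lem:rootsandflips}\ref{eq:rootsbyflips}, so $\ell(s_{\Root{I}{i}} w) < \ell(w)$ by~\eqref{eq:inversionandbruhat}. Whether $\Root{I}{i}$ is positive or negative, this rules out $\Root{I}{i} \in \Eplus{w}{\demazure{\Q}} \subseteq \Phiplus$: in the negative case membership fails outright, while in the positive case $\Root{I}{i} \in \inv(w)$ prevents the Bruhat cover condition $w \prec s_{\Root{I}{i}} w$. The only delicate step of the whole argument is the key identity: non-flippability of $i \in I$ is the exact combinatorial shadow of a cover $w \prec s_{\Root{I}{i}} w$ lying inside $[w, \demazure{\Q}]$. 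Once this dictionary is in place, \Cref{lem:techlem}, \Cref{lem:rootsandflips}, and~\eqref{eq:demazurestrongdef} combine directly to finish both directions.
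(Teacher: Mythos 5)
Your proposal is correct, and it takes a genuinely cleaner route for the forward inclusion than the paper does. The paper's \Cref{lem:nonflippablesarepositive} factors $w = w_1 w_2$ at position~$i$, shows $w_1 \prec w_1 s_i$, shows $w_2 \prec s_i w_2$ via a word-reversal/symmetry trick on the subword complex, and then imports the Bruhat multiplicativity lemma (\Cref{lem:realtechlem}, that $x \leq xr$ and $y \leq ry$ imply $xy \leq xry$) to conclude $w \prec s_\beta w$. You instead observe directly that $s_{\Root{I}{i}} w = \wordprod{\Q}{([m]\setminus I)\cup\{i\}}$ is a subword product of length $\ell(w)+1$, so $\ell(s_{\Root{I}{i}} w) \leq \ell(w)+1$; and non-flippability gives $|\Root{I}{i}| \notin \inv(w)$, hence $\ell(s_{\Root{I}{i}} w) \geq \ell(w)+1$ by \eqref{eq:inversionandbruhat}. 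Equality, reducedness, the cover, and (via the reduced prefix $vs_i$ and \eqref{eq:weaklength}) positivity of $\Root{I}{i}$ all drop out at once, with no appeal to \Cref{lem:realtechlem} or to the reversal argument. The converse inclusion and the ``moreover'' part are essentially the paper's arguments, phrased directly in terms of \Cref{lem:techlem} without introducing the map~$\iota$, which the paper sets up mainly to prove the stronger \Cref{lem:coversandnonflipables}\ref{it:coversandnonflipables-c}.

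One small point of exposition: the sentence ``so~\eqref{eq:inversionandbruhat} gives $\ell(s_{\Root{I}{i}} w) = \ell(w)+1$'' overstates what that equation alone delivers. By itself it gives only the lower bound $\ell(s_{\Root{I}{i}} w) \geq \ell(w)+1$; the matching upper bound $\leq \ell(w)+1$ comes from the identity $s_{\Root{I}{i}} w = \wordprod{\Q}{([m]\setminus I)\cup\{i\}}$ you have already set up. The argument is sound, but the dependency should be stated in that order so that reducedness genuinely follows.
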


First observe that
\begin{align*}
 \Eplus{w}{\demazure{\Q}} \neq \emptyset &\Longleftrightarrow w < \demazure{\Q} \\
                                         &\Longleftrightarrow \subwordComplex(\Q,w) \text{ is a ball } \\
                                         &\Longleftrightarrow \subwordComplex(\Q,w) \text{ has facets containing non-flippable vertices }.
\end{align*}
So the interesting case is if these conditions are satisfied:
For a given $\beta \in \Eplus{w}{\demazure{\Q}}$, \ie, $w \prec s_\beta w \leq \demazure{\Q}$, consider the map
\begin{equation}
\label{eq:iota}
  \iota : \subwordComplex(\Q,s_\beta w) \rightarrow \subwordComplex(\Q, w)
\end{equation}
from facets of $\subwordComplex(\Q,s_\beta w)$ to facets of $\subwordComplex(\Q,w)$ given by $J \mapsto J \cup \{k\}$ where~$k$ is the unique index in the complement of~$J$ such that $\Q_{\{1,\dots,m\} \setminus (J \cup \{k\})}$ is a reduced word for~$w$.
This unique index is well-defined by \Cref{lem:techlem} saying that for any reduced word $\s_1\dots \s_\ell$ for~$s_\beta w$ there is a unique index~$k$ such that $\s_1\dots \widehat \s_k \dots \s_\ell$ is a word of~$w$ and this word is reduced because $w \prec s_\beta w$ implies $\ell(w) = \ell(s_\beta w) - 1$.

This discussion shows that every facet of $\subwordComplex(\Q,s_\beta w)$ is also a face of $\subwordComplex(\Q, w)$, and this face is of codimension~$2$.
It is in particular not surprising that the map~$\iota$ is not injective in general as seen in the following example.

\begin{example}[Type $A_1 \times A_1$]
  We write this example in type $B_3$ to use the above example scheme.
  Let $\Q = 131$ and $w = 13 = 31$.
  The complex $\subwordComplex(\Q,w)$ then contains the two facets
  \[
    \greedy = \{1\} \text{ and } \antigreedy = \{3\}.
  \]
  If we consider the lower cover $s_3 \prec w$ we obtain
  \[
    \iota(\greedy) = \{1,3\} = \iota(\antigreedy).
  \]
\end{example}

\begin{lemma}
\label{lem:nonflippablesarepositive}
  Let $\subwordComplex(\Q,w)$ be a non-empty subword complex, let $I \in \subwordComplex(\Q,w)$ be a facet and let $i \in I$ non-flippable.
  Then $I \setminus \{i\}$ is a facet of $\subwordComplex(\Q,s_\beta w)$ for $\beta = \Root{I}{i}$.
  In particular, the complement of $I \setminus \{i\}$ is a reduced word and $\beta \in \Eplus{w}{\demazure{\Q}}$.
\end{lemma}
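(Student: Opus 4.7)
The plan is to explicitly reconstruct a reduced word for $s_\beta w$ by inserting the letter $\s_i$ at the appropriate position of a reduced word for $w$, and then read off all three conclusions from this construction.

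First, I would set up notation. Write $I = \{i_1 < \cdots < i_q\}$ with $i = i_p$, and let $[m]\setminus I = \{j_1 < \cdots < j_\ell\}$, so that $\Q_{[m]\setminus I} = \s_{j_1}\dots\s_{j_\ell}$ is a reduced word for $w$ by the facet property. Let $k = |\{\, r : j_r < i\,\}|$, so that the complement word of $I \setminus \{i\}$ in $\Q$ is exactly $\s_{j_1}\dots\s_{j_k}\s_i\s_{j_{k+1}}\dots\s_{j_\ell}$. Since $\{j_1,\dots,j_k\} = \{1,\dots,i-1\}\setminus I$, the definition of the root function gives
\[
  \beta \;=\; \Root{I}{i} \;=\; (s_{j_1}\cdots s_{j_k})(\alpha_{s_i}),
\]
and conjugation therefore yields $s_\beta = (s_{j_1}\cdots s_{j_k})\,s_i\,(s_{j_1}\cdots s_{j_k})^{-1}$. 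Multiplying this identity by $w$ on the right shows that the inserted word evaluates in $W$ to $s_\beta w$.

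The key step is to verify that this inserted word is reduced. Because $i$ is non-flippable in $I$, \Cref{lem:rootsandflips}\ref{eq:rootsbyflips} yields $|\beta| \notin \inv(w)$, and then~\eqref{eq:inversionandbruhat} gives $\ell(s_\beta w) = \ell(s_{|\beta|}w) > \ell(w) = \ell$. Hence $\ell(s_\beta w) = \ell + 1$, which matches the length of our constructed word, proving it is a reduced expression for $s_\beta w$. This immediately establishes that $I\setminus\{i\}$ is a facet of $\subwordComplex(\Q, s_\beta w)$ and that its complement is a reduced word.

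It then remains to show $\beta \in \Eplus{w}{\demazure{\Q}}$. Since every prefix of a reduced word is reduced, in particular $s_{j_1}\cdots s_{j_k}\,s_i$ is reduced, which by~\eqref{eq:weaklength} forces $\beta = (s_{j_1}\cdots s_{j_k})(\alpha_{s_i}) \in \Phiplus$. The relation $\ell(s_\beta w) = \ell(w)+1$ with $s_\beta$ a reflection is precisely the cover $w \prec s_\beta w$ in Bruhat order, and non-emptiness of $\subwordComplex(\Q, s_\beta w)$ combined with~\eqref{eq:demazurestrongdef} yields $s_\beta w \leq \demazure{\Q}$. Altogether $\beta \in \Eplus{w}{\demazure{\Q}}$. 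The only genuinely delicate point is the positivity of $\beta$, which I expect could look mysterious a priori but falls out for free once the reduced-word structure of the inserted expression is in place.
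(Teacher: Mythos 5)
Your proof is correct, and it takes a genuinely different route from the paper's. Both arguments begin by reconstructing the reduced word for $s_\beta w$ via insertion of $\s_i$. The paper then splits the complement of $I$ into a prefix word for $w_1$ and a suffix word for $w_2$ with $w = w_1 w_2$, deduces $\beta \in \Phiplus$ first (hence $w_1 \prec w_1 s_i$), applies the reversal isomorphism of subword complexes to obtain the dual relation $w_2 \prec s_i w_2$, and finally invokes \Cref{lem:realtechlem} to glue these two cover relations into $w \prec s_\beta w$. Your argument bypasses all of that: from $|\beta|\notin\inv(w)$ you read off $\ell(s_\beta w)\geq \ell(w)$ directly from~\eqref{eq:inversionandbruhat}, combine it with the obvious upper bound $\ell(s_\beta w)\leq \ell(w)+1$ from the inserted word, and conclude $\ell(s_\beta w)=\ell(w)+1$, at which point reducedness is immediate; the positivity $\beta\in\Phiplus$ then falls out afterward from the reduced prefix via~\eqref{eq:weaklength}, whereas the paper needs it up front. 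Your route avoids both the reversal trick and \Cref{lem:realtechlem} and is the more economical of the two. The one point you gloss over is the strictness $\ell(s_\beta w) > \ell(w)$ rather than $\geq$: this needs the standard observation that $\ell(s_\beta w) \not\equiv \ell(w) \pmod 2$ because reflections have odd length, so the two lengths can never be equal. It is a minor omission but worth a half-sentence.
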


\begin{proof}
  For $\Q = \s_1 \dots \s_m$ the complement of the facet $\Q_{\{1,\dots,m\} \setminus I}$ is a reduced word for~$w$.
  We split this word into its prefix $\w_1 = \Q_{\{1,\dots,i\} \setminus I}$ and its suffix $\w_2 = \Q_{\{i+1,\dots,m\} \setminus I}$.
  We then have for the corresponding elements
  \[
    w = w_1 \cdot w_2.
  \]
  Now $i \in I$ is not flippable, hence by \Cref{lem:rootsandflips}\ref{eq:rootsbyflips} we have $\beta = w_1( \alpha_{s_i} ) \in \Phiplus$ and therefore $w_1 \prec w_1\cdot s_i$.
  For $\Q_{rev} = \s_m \dots \s_1$, we have the obvious isomorphism $\subwordComplex(\Q,w) \cong \subwordComplex(\Q_{rev},w^{-1})$ given by $i \mapsto m+1-i$.
  The position $m+1-i$ is thus not flippable in $\subwordComplex(\Q_{rev},w^{-1})$ and we obtain by the same argument that $w_2 \prec s_i\cdot w_2$.
  Applying now \Cref{lem:realtechlem} we obtain 
  \[
    w = w_1\cdot w_2 \prec w_1 \cdot s_i \cdot w_2 = s_\beta w.
  \]
  Hence $\ell(s_\beta w) = \ell(w) + 1$ and we conclude the statement.
\end{proof}

\begin{lemma}
  \label{lem:coversandnonflipables}
  Let $w \prec s_\beta w \leq \demazure{\Q}$.
  Then:
  \begin{enumerate}
    \item\label{it:coversandnonflipables-b} There is a facet $I \in \subwordComplex(\Q,w)$ and an index $i \in I$ with $\Root{I}{i} = \beta$.
    \item\label{it:coversandnonflipables-c} A facet $I \in \subwordComplex(\Q,w)$ is in the image of $\iota$ if and only if $\beta \in \Roots{I}$.
  \end{enumerate}
\end{lemma}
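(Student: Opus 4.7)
The plan is to prove both parts by directly manipulating reduced words and invoking \Cref{lem:techlem}, which characterizes the unique position where a letter can be dropped from a reduced word to multiply by a given reflection. The key technical point is bookkeeping: translating between ``position inside the reduced word spelled by the complement of a facet'' and ``position in the full word $\Q$''.

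For part~\ref{it:coversandnonflipables-b}, since $s_\beta w \leq \demazure{\Q}$, the complex $\subwordComplex(\Q, s_\beta w)$ is non-empty; pick any facet~$J$, and let $\s_{c_1}\dots\s_{c_\ell}$ be the reduced word for $s_\beta w$ spelled by its complement (with $c_1<\dots<c_\ell$ and $\ell=\ell(s_\beta w)$). By \Cref{lem:techlem} applied to this reduced expression and to~$s_\beta$, there is a unique index~$p$ with $\beta = s_{c_1}\cdots s_{c_{p-1}}(\alpha_{s_{c_p}})$, and the word obtained by deleting $\s_{c_p}$ is a reduced word for~$w$. Setting $i := c_p$ and $I := J \cup \{i\}$ makes the complement of~$I$ a reduced word for~$w$, so $I$ is a facet of $\subwordComplex(\Q,w)$, and the indices in $\{1,\dots,i-1\}\setminus I = \{1,\dots,i-1\}\setminus J$ are precisely $c_1,\dots,c_{p-1}$. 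Therefore
\[
  \Root{I}{i} = \wordprod{\Q}{\{1,\dots,i-1\}\setminus I}(\alpha_{s_i}) = s_{c_1}\cdots s_{c_{p-1}}(\alpha_{s_{c_p}}) = \beta,
\]
and moreover this construction gives $I = \iota(J)$.

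For the forward direction of part~\ref{it:coversandnonflipables-c}, if $I = \iota(J)$, then by the construction of~$\iota$ we have $I = J \cup \{i\}$ for the unique index~$i$ from \Cref{lem:techlem}, and the computation above shows $\Root{I}{i} = \beta$, hence $\beta \in \Roots{I}$. For the converse, suppose $\beta \in \Roots{I}$, so there exists $i \in I$ with $\Root{I}{i} = \beta$. Let $J = I \setminus \{i\}$; I will show $J$ is a facet of $\subwordComplex(\Q, s_\beta w)$ and $\iota(J) = I$. Writing the complement of~$I$ as a reduced word $\s_{c_1}\cdots\s_{c_\ell}$ for~$w$ with $c_{p-1} < i < c_p$, the complement of~$J$ spells $\s_{c_1}\cdots\s_{c_{p-1}}\,\s_i\,\s_{c_p}\cdots\s_{c_\ell}$. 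The hypothesis $\Root{I}{i} = \beta$ translates into $s_{c_1}\cdots s_{c_{p-1}}\, s_i\, (s_{c_1}\cdots s_{c_{p-1}})^{-1} = s_\beta$, from which we read off that this expression equals $s_\beta w$. Its length is $\ell(w)+1 = \ell(s_\beta w)$, so the expression is reduced and $J$ is indeed a facet of $\subwordComplex(\Q, s_\beta w)$; the uniqueness statement in \Cref{lem:techlem} then forces $\iota(J) = I$.

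The only part requiring genuine care rather than routine index-shuffling is verifying that the unique-index assertion from \Cref{lem:techlem} matches the subword-complex definition of~$\iota$ in the backwards direction of~\ref{it:coversandnonflipables-c}; once one pins down the correspondence between the prefix product defining~$\Root{I}{i}$ and the prefix reflection appearing in \Cref{lem:techlem}\ref{it:betaisinversion}, both parts follow by the same calculation.
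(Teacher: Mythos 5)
Your proof is correct and follows essentially the same strategy as the paper: both arguments hinge on \Cref{lem:techlem} and the same computation identifying $\Root{I}{i}$ with $\beta$. The one place you diverge is the converse of part~\ref{it:coversandnonflipables-c}: the paper first observes that $\beta \notin \inv(w)$ forces $i$ to be non-flippable (via \Cref{lem:rootsandflips}) and then cites \Cref{lem:nonflippablesarepositive} to conclude that $I\setminus\{i\}$ is a facet of $\subwordComplex(\Q, s_\beta w)$, whereas you verify this directly by writing out the complement of $I\setminus\{i\}$ and checking that the resulting expression for $s_\beta w$ has length $\ell(w)+1 = \ell(s_\beta w)$, hence is reduced. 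Your route is more self-contained here because it exploits the a priori hypothesis $w \prec s_\beta w$; the paper's \Cref{lem:nonflippablesarepositive} is proved without that assumption (indeed it \emph{derives} $\beta \in \Eplus{w}{\demazure{\Q}}$), so reusing it is a choice of economy rather than necessity. Either way is valid, and the bookkeeping you carry out — relating the prefix product in the definition of $\Root{I}{i}$ to the prefix reflection in \Cref{lem:techlem}\ref{it:betaisinversion} — is exactly the content of the paper's computation as well.
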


\begin{proof}
  For \ref{it:coversandnonflipables-b}, take any facet $I \in \subwordComplex(\Q,s_\beta w)$ and let~$i_k$ be defined as above with $\iota(I) = I \cup \{i_k\}$.
  We then obtain the subword $\s_{i_1}\dots\s_{i_\ell} = \Q_{\{1,\dots,m\} \setminus I}$ of $\Q$ with
  \[
    s_\beta w = s_{i_1} \dots s_{i_k} \dots s_{i_\ell}, \quad
            w = s_{i_1} \dots \widehat s_{i_k} \dots s_{i_\ell}
  \]
  being reduced expressions for~$s_\beta w$ and for~$w$, respectively.
  This shows that
  \[
    \beta = s_{i_1} \dots s_{i_{k-1}}(\alpha_{s_{i_k}}) = \Root{I \cup \{i_k\}}{i_k}.
  \]
  Clearly~$i_k$ is not flippable in the facet $I \cup \{i_k\}$ of $\subwordComplex(\Q,w)$, as otherwise the given reduced expression of~$s_\beta w$ would contain two different reduced expressions for~$w$, and this is not the case by the uniqueness property in \Cref{lem:techlem}.
  
  For \ref{it:coversandnonflipables-c}, we have already seen in the proof of~\ref{it:coversandnonflipables-b} that $\beta = \Root{I\cup\{i_k\}}{i_k}$ if $I\cup\{i_k\} = \iota(I)$ is in the image of~$\iota$.
  Conversely, let~$I$ be a facet of $\subwordComplex(\Q,w)$ such that $\beta = \Root{I}{i_k}$ for some $i_k \in I$.
  As $w \prec s_\beta w$ especially $\beta \not\in \inv(w)$, hence~$i_k$ is not flippable in~$I$.
  Therefore by \Cref{lem:nonflippablesarepositive} we have $I \setminus \{i_k\}$ is a facet in $\subwordComplex(\Q,s_\beta w)$ and furthermore $I = \iota( I \setminus \{i_k\} )$ is in the image of~$\iota$, which concludes the proof.
\end{proof}

\begin{proof}[Proof of \Cref{prop:bruhatcoversandrootfunction}]
  If $\beta \in \Eplus{w}{\demazure{\Q}}$ by \Cref{lem:coversandnonflipables}\ref{it:coversandnonflipables-b} there is a facet $I \in \subwordComplex(\Q,w)$ and a non-flippable index $i \in I$ with $\Root{I}{i} = \beta$ and thus
  \[
    \Eplus{w}{\demazure{\Q}} \subseteq \bigset{\Root{I}{i}}{I \text{ facet of }\subwordComplex(\Q,w) \text{ and } i \in I \text{ not flippable }}
  \]
  The other inclusion follows immediately with \Cref{lem:nonflippablesarepositive}.
\end{proof}

\begin{example}[Type $B_3$]
  Take $\Q = 123123123$ to be a reduced word for the longest element~$\wo$.
  For the simple generator~$s_1$ we have
  \[
    \Eplus{s_1}{\wo} = \{ \del{010},~ \del{001},~ \del{110} \}
  \]
  corresponding to covers $s_{\del{010}} s_1 = s_2s_1,~ s_{\del{001}} s_1 = s_3s_1$ and $s_{\del{110}} s_1 = s_1s_2$.
  The facets of $\subwordComplex(\Q,s_1)$ and their root configurations are
  \begin{align*}
    \Roots{\{\markedbox{1},2,3,\markedbox{4},5,6,8,9\}} &= \multibracket{ \markedbox{\del{100}},~ \del{010},~ \del{001},~ \markedbox{\del{100}},~ \del{010},~ \del{001},~ \del{110},~ \del{001} } \\
    \Roots{\{\markedbox{1},2,3,5,6,\markedbox{7},8,9\}} &= \multibracket{ \markedbox{\del{100}},~ \del{010},~ \del{001},~ \del{110},~ \del{001},~ \markedbox{\del{\mi{1}00}},~ \del{110},~ \del{001} } \\
    \Roots{\{2,3,\markedbox{4},5,6,\markedbox{7},8,9\}} &= \multibracket{ \del{110},~ \del{001},~ \markedbox{\del{\mi{1}00}},~ \del{110},~ \del{001},~ \markedbox{\del{\mi{1}00}},~ \del{110},~ \del{001} }
  \end{align*}
  with flippable positions in bold red.
\end{example}

For the values of the root function on non-flippable vertices this translates to the following result on root configurations.

\begin{proposition}
  \label{prop:nonflipablerootincone}
  Let $\subwordComplex(\Q,w)$ be a non-empty subword complex.
  Then
  \[
    \Eplus{w}{\demazure{\Q}} \subseteq \cone( \Roots{F} )
  \]
  for every facet $F \in \subwordComplex(\Q,w)$.
\end{proposition}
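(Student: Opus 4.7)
My plan is to induct on the length of the word $\Q$, exploiting the recursive decomposition of a subword complex along its first letter. Write $\Q = \s \cdot \Q'$ with $s = s_1$. The base case $\Q = \emptyword$ is vacuous since then $w = \one = \demazure{\Q}$ and $\Eplus{\one}{\one}$ is empty, so assume $|\Q| \geq 1$ and fix a facet $F$ of $\subwordComplex(\Q,w)$. The induction splits along the two natural cases arising from the recursive structure: either $1 \in F$ or $1 \notin F$.

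In the first case, $F' = F \setminus \{1\}$ (reindexed as a subset of $\{1,\dots,m-1\}$) is a facet of $\subwordComplex(\Q',w)$, and a direct unfolding of the definition of $\Root{F}{\cdot}$ gives $\Roots{F} = \{\alpha_s\} \cup \Roots{F'}$. Using $\demazure{\Q} = s * \demazure{\Q'}$, I split further: if $s\demazure{\Q'} \prec \demazure{\Q'}$ then $\demazure{\Q} = \demazure{\Q'}$, the inductive hypothesis on $\subwordComplex(\Q',w)$ yields $\Eplus{w}{\demazure{\Q}} \subseteq \cone\Roots{F'} \subseteq \cone\Roots{F}$, and we are done. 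Otherwise $\demazure{\Q} = s\demazure{\Q'}$ with $\demazure{\Q'} \prec s\demazure{\Q'}$, and \Cref{cor:bruhatcone} applied to $w \leq \demazure{\Q'} \prec s\demazure{\Q'}$ gives
\[
  \Cplus{w}{\demazure{\Q}} \subseteq \Cplus{w}{\demazure{\Q'}} + \RRpos(\alpha_s).
\]
The inductive hypothesis places $\Cplus{w}{\demazure{\Q'}}$ in $\cone\Roots{F'}$, and since $\alpha_s = \Root{F}{1} \in \Roots{F}$, the containment $\Eplus{w}{\demazure{\Q}} \subseteq \cone\Roots{F}$ follows.

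In the second case, the complement of $F$ begins with the letter $\s$, forcing $sw \prec w$; moreover $F$ (reindexed) is a facet of $\subwordComplex(\Q', sw)$ and the same kind of direct unfolding gives $\Roots{F} = s\big(\Roots{F'}\big)$. The strategy is to transport the statement across the reflection $s$: I apply \Cref{lem:extendingcovercones} to $sw \prec w \leq \demazure{\Q}$ to obtain $s\big(\Cplus{w}{\demazure{\Q}}\big) \subseteq \Cplus{sw}{\tau}$, where $\tau$ is the Bruhat-smaller of $\demazure{\Q}$ and $s\demazure{\Q}$. A short case distinction on whether $s\demazure{\Q'} \succ \demazure{\Q'}$ or not shows that $\tau \leq \demazure{\Q'}$ in either subcase (in the first, $\tau = \demazure{\Q'}$; in the second, $\tau = s\demazure{\Q'} \prec \demazure{\Q'}$), hence $\Cplus{sw}{\tau} \subseteq \Cplus{sw}{\demazure{\Q'}}$. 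The inductive hypothesis on $\subwordComplex(\Q', sw)$ then places the latter inside $\cone\Roots{F'}$, and applying $s$ to both sides yields $\Eplus{w}{\demazure{\Q}} \subseteq s\big(\cone\Roots{F'}\big) = \cone\Roots{F}$.

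The main obstacle I anticipate is not any single calculation but the bookkeeping: one has to recognize that the Bruhat-cone inclusions \Cref{cor:bruhatcone} and \Cref{lem:extendingcovercones} from \Cref{sec:conesinbruhatintervals} were tailored precisely for this induction, and to correctly match the various Demazure products $\demazure{\Q}$, $\demazure{\Q'}$, $s\demazure{\Q'}$ against the cones $\Cplus{w}{\demazure{\Q'}}$ and $\Cplus{sw}{\demazure{\Q'}}$ that appear after induction. Once this dictionary is pinned down, both cases reduce to a single application of the appropriate cone inclusion combined with the inductive hypothesis.
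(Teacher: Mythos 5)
Your proposal is correct and follows essentially the same route as the paper's proof: induct on $|\Q|$, peel off the first letter, split on whether the first position lies in the facet, and in each branch invoke \Cref{cor:bruhatcone} (Case $1 \in F$) or \Cref{lem:extendingcovercones} (Case $1 \notin F$) together with the inductive hypothesis for $\subwordComplex(\Q',w)$ or $\subwordComplex(\Q',sw)$. The only cosmetic difference is that you spell out the case distinction identifying the $\tau$ produced by \Cref{lem:extendingcovercones} with something $\leq \demazure{\Q'}$, which the paper leaves implicit; both are correct.
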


\begin{proof}
  We prove this statement by induction on the length of $\Q = \s_1\dots\s_m$.
  The statement is trivially true for $m=1$, so we may assume $m \geq 2$.
  Let $\Q' = \s_2 \dots \s_m$ be obtained from $\Q$ by deleting the initial letter $\s_1$.
  Set moreover $\sigma = \demazure{\Q}$ and $\tau = \demazure{\Q'}$.
  One has the decomposition
  \[
    \subwordComplex(\Q,w) = \big(\{1\} \star \subwordComplex(\Q',w)\big) \sqcup \subwordComplex(\Q',s_1 w),
  \]
  where $\subwordComplex(\Q',\s_1 w)$ is only taken into account if $s_1 w \prec w$, compare the proof of~\cite[Theorem~2.5]{KM2004}.
  For a facet $\subwordComplex(\Q,w)$, one either has $1 \in I$ and then $I_1 = I \setminus \{1\} \in \subwordComplex(\Q',w)$ is a facet with $\Roots{I} = \Roots{I_1} \cup \{ \alpha_{s_1} \}$, or $1 \notin I$ and then $I_2 = I \in \subwordComplex(\Q',s_1 w)$ is a facet with $\Roots{I} = s_1(\Roots{I_2})$.
  We aim to show that $\Eplus{w}{\sigma} \subseteq \cone( \Roots{I} )$.

  \textbf{Case 1:} $1 \in I$:
  We may assume by induction that
  \[
    \Eplus{w}{\tau} \subseteq \cone( \Roots{I_1} ).
  \]
  If $\tau = \sigma$, we obtain
  \[
    \Eplus{w}{\sigma} = \Eplus{w}{\tau} \subseteq \cone( \Roots{I_1} ) \subseteq \cone( \Roots{I} ).
  \]
  If $\tau \prec \sigma = s_1 \tau$, we apply \Cref{cor:bruhatcone} to the situation $w \leq \tau \prec \sigma = s_1 \tau$ and obtain
  \[
    \Eplus{w}{\sigma} \subseteq \Cplus{w}{\tau} + \RRpos(\alpha_{s_1}) \subseteq \cone(\Roots{I_1}) + \RRpos(\alpha_{s_1}) = \cone(\Roots{I_1}\cup\{\alpha_{s_1}\}) = \cone(\Roots{I}).
  \]

  \textbf{Case 2:} $1 \notin I$:
  We may assume by induction that
  \[
    \Eplus{s_1 w}{\tau} \subseteq \cone( \Roots{I_2} ), \text{ hence } \Cplus{s_1 w}{\tau} \subseteq \cone( \Roots{I_2} ).
  \]
  Since $1 \notin I$, there is a reduced word for~$w$ that starts with~$\s_1$ and hence $s_1 w \prec w \leq \sigma$.
  By \Cref{lem:extendingcovercones} we obtain
  \[
    s_1\big( \Cplus{w}{\sigma} \big) \subseteq \Cplus{s_1w}{\tau},
  \]
  and thus
  \[
    \Eplus{w}{\sigma} \subseteq \Cplus{w}{\sigma} \subseteq s_1\big( \Cplus{s_1w}{\tau} \big) \subseteq s_1 \cone( \Roots{I_2} ) = \cone( s_1 \Roots{I_2} ) = \cone( \Roots{I} ). \qedhere
  \]
\end{proof}

The following describes a non-trivial example for the situation in \Cref{prop:nonflipablerootincone}.
In particular, it shows that it is not enough to take the root configuration itself in the conclusion of the proposition.

\begin{example}[Type $A_2$]
  Take $\Q = 1212$ with $\demazure{\Q} = 121 = 212$ and let $w = 12$.
  We then have 
  $
    \Eplus{w}{\demazure{\Q}} = \{ \del{01} \},
  $
  the facets of $\subwordComplex(\Q,w)$ and their root configurations are given by
  \[
    \Roots{\{1,2\}} = \multibracket{\del{10},~ \del{01}}, \quad
    \Roots{\{2,3\}} = \multibracket{\del{11},~ \del{\mi{1}0}}, \quad
    \Roots{\{3,4\}} = \multibracket{\del{01},~ \del{\mi{11}} }.
  \]
  While $\del{01} \notin \Roots{\{2,3\}}$ is not in the root configuration, we see $\del{01} = \del{11} + \del{\mi{1}0} \in \cone\Roots{\{2,3\}}$ is in its cone, as proposed.
\end{example}

\subsection{Constructing antigreedy facets inside certain half-spaces}
\label{sec:algorithm}

We call a linear functional $f : V \rightarrow \RR$ \Dfn{non-negative} for the Bruhat interval $[x,y]$ if $f(\beta) \geq 0$ for all $\beta \in \Eplus{x}{y}$.
In this section, we provide an algorithm to construct an antigreedy facet~$I_f$ of $\subwordComplex(\Q,w)$ relative to a given linear functional $f : V \longrightarrow \RR$ that is non-negative for the Bruhat interval $[w,\demazure{\Q}]$.
We show in \Cref{prop:algowelldefined} that the facet~$I_f$ is well-defined and that the linear functional~$f$ is non-negative on its root configuration, $f(\beta) \geq 0$ for all $\beta \in \Roots{I_f}$.
For any vector~$v \notin \Cplus{w}{\demazure{\Q}}$, one may thus choose a linear functional that is non-negative for $[w,\demazure{\Q}]$ while $f(v) < 0$.
We then obtain $v \notin \cone(\Roots{I})$.
This implies the remaining inclusion
\[
  \Cplus{w}{\demazure{\Q}} \ \supseteq \ \bigcap_I \cone\Roots{I},
\]
and thus concludes the proof of \Cref{thm:cone_equality}.

\clearpage
\begin{algo}
\label{algo:uniquefacet}
Computing the $f$-antigreedy facet $I_f$ of the subword complex $\subwordComplex(\Q,w)$

\begin{algorithm}[H]
  \SetKwInOut{Input}{Input}
  \SetKwInOut{Conditions}{Conditions}
  \SetKwInOut{Output}{Output}

  \Input{$\Q = \s_1\dots\s_m$\\ $w \leq \demazure{\Q}$\\ $f: V \rightarrow \RR$ with $f(\beta) \geq 0$ for $\beta \in \Eplus{w}{\demazure{\Q}}$}
  \bigskip
  \Conditions{Conditions (1)--(6) are given by the following decision tree:\\
    \medskip
    \begin{tikzpicture}[scale=0.5]
      
      \node (f) at (6,15) {$\operatorname{sgn}( f(\beta_k) )$};
      
      \node[violet] (beta1) at (0,9)  {$(4)$};
      \node         (beta2) at (6,9)  {$\beta_k \in \Phiplus$};
      \node         (beta3) at (12,9) {$\beta_k \in \Phiplus$};
      
      \node       (gc1) at (3,6)  {$w_{k-1} s_k \leqweak w$};
      \node[blue] (gc2) at (9,6)  {$(2)$};
      \node       (gc3) at (15,6) {$w_{k-1} s_k \leqweak w$};
      
      \node[violet] (dem1) at (0,3)  {$(5)$};
      \node         (dem2) at (18,3) {$w_{k-1}^{-1} w \leq \demazure{\s_{k+1}\dots\s_m}$};
      
      \node[blue]   (a1) at (9,0)  {$(1)$};
      \node[blue]   (a2) at (15,0) {$(3)$};
      \node[violet] (a3) at (21,0) {$(6)$};
      
      \draw (beta1) -- (f) -- (beta2);
      \draw (beta3) -- (f);
      
      \node[fill=white] (fv1) at (3,12) {$-$};
      \node[fill=white] (fv2) at (6,12) {$0$};
      \node[fill=white] (fv3) at (9,12) {$+$};
      
      \draw[green] (beta2) -- (gc1) -- (dem1);
      \draw[green] (beta3) -- (gc3) -- (dem2) -- (a2);
      
      \draw[red] (beta2) -- (gc2) -- (beta3);
      \draw[red] (gc1) -- (a1) -- (gc3);
      \draw[red] (dem2) -- (a3);
      
      \node[fill=white] (betav1) at (4.5,7.5)  {$T$};
      \node[fill=white] (betav2) at (7.5,7.5)  {$F$};
      \node[fill=white] (betav3) at (10.5,7.5) {$F$};
      \node[fill=white] (betav4) at (13.5,7.5) {$T$};
      
      \node[fill=white] (lwv1) at (1.5,4.5)  {$T$};
      \node[fill=white] (lwv2) at (6,3)      {$F$};
      \node[fill=white] (lwv3) at (12,3)     {$F$};
      \node[fill=white] (lwv4) at (16.5,4.5) {$T$};
      
      \node[fill=white] (bwv1) at (16.5,1.5) {$T$};
      \node[fill=white] (bwv2) at (19.5,1.5) {$F$};
      
    \end{tikzpicture} \\
    The sign can be positive ($+$), negative ($-$), or zero ($0$). \\
    The statements can be true ($T$) or false ($F$).}
  \bigskip
  \Output{$I_f \subset \{1,\dots,m\}$}
  \bigskip

  $w_0              \leftarrow \one \in W$\\
  $I_0 \hspace*{3pt}\leftarrow \{\}$\\
  \bigskip

  \For{$k = 1,\dots, m$}{
    $\beta_k        \leftarrow w_{k-1}(\alpha_{s_k})$

    \If{Condition (1) or (2) or (3)}{
      $I_k \hspace*{3pt}\leftarrow I_{k-1} \cup \{k\}$ \\
      $w_k              \leftarrow w_{k-1}$
    }
    \ElseIf{Condition (4) or (5) or (6)}{
      $I_k \hspace*{3pt}\leftarrow I_{k-1}$ \\
      $w_k              \leftarrow w_{k-1} \cdot s_k$
    }
  }
  $I_f \leftarrow I_m$ \\
  \Return $I_f$
\end{algorithm}
\end{algo}

\bigskip

We aim to prove the following properties of the output set $I_f \subseteq \{1,\dots,m\}$ of \Cref{algo:uniquefacet}.

\begin{theorem}
\label{prop:algowelldefined}
  Let $\subwordComplex(\Q,w)$ be a non-empty subword complex and let $f: V \rightarrow \RR$ with $f(\beta) \geq 0$ for $\beta \in \Eplus{w}{\demazure{\Q}}$.
  The output set~$I_f \subseteq \{1,\dots,m\}$ of \Cref{algo:uniquefacet} has the following properties:
  \begin{enumerate}
    \item\label{it:algowelldefined-a} $I_f$ is a facet of the subword complex $\subwordComplex(\Q,w)$, \ie, the word $\Q_{\{1,\dots,m\} \setminus I_f}$ is a reduced word for~$w$.
    \item\label{it:algowelldefined-b} For~$i \in I_f$, we have $f(\Root{I_f}{i}) \geq 0$.
    \item\label{it:algowelldefined-c} For~$i \in I_f$ with $f(\Root{I_f}{i}) = 0$ and $\Root{I_f}{i} \not\in \Eplus{w}{\demazure{\Q}}$, we have $\Root{I_f}{i} \in \Phiminus$.
  \end{enumerate}
\end{theorem}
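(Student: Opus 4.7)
The plan is to proceed by induction on $k = 0, 1, \ldots, m$, maintaining the two invariants \textbf{(I1)}~$w_k \leqweak w$ and \textbf{(I2)}~$w_k^{-1} w \leq \demazure{\s_{k+1}\dots\s_m}$. Both hold trivially at $k = 0$. At $k = m$, invariant~(I2) forces $w_m = w$ since $\demazure{\emptyword} = \one$, and combined with~(I1) this yields that $\Q_{\{1,\dots,m\} \setminus I_f}$ is a reduced expression for~$w$, proving~\ref{it:algowelldefined-a}. Since for $i \in I_f$ one has $\Root{I_f}{i} = w_{i-1}(\alpha_{s_i}) = \beta_i$ by construction (because $\{1,\ldots,i-1\}\setminus I_{i-1} = \{1,\ldots,i-1\}\setminus I_f$), statement~\ref{it:algowelldefined-b} is immediate: the algorithm enters the skipping branch only through conditions~(1), (2), or~(3), each of which requires $f(\beta_i) \geq 0$.

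For part~\ref{it:algowelldefined-c}, fix $i \in I_f$ with $f(\beta_i) = 0$ and $\beta_i \in \Phiplus$. Inspecting the decision tree excludes conditions~(2) (forcing $\beta_i \in \Phiminus$) and~(3) (forcing $f(\beta_i) > 0$), leaving only condition~(1), under which $w_{i-1} s_i \not\leqweak w$. Combined with $w_{i-1} \leqweak w$ from~(I1), this gives $\beta_i \notin \inv(w)$, so~\eqref{eq:inversionandbruhat} yields $w \precbru s_{\beta_i} w$. To show $s_{\beta_i} w \leq \demazure{\Q}$, set $u = w_{i-1}^{-1} w$ so that $s_{\beta_i} w = w_{i-1} s_i u$. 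By construction $w_{i-1} = \wordprod{\Q}{X_1}$ with $X_1 = \{1,\ldots,i-1\}\setminus I_f$, and by~(I2) at step $i-1$ the element $u$ is realized as a subword product $\wordprod{\Q}{X_2}$ with $X_2 \subseteq \{i,\ldots,m\}$. If $i \notin X_2$, then $s_{\beta_i} w = \wordprod{\Q}{X_1 \cup \{i\} \cup X_2}$; otherwise $u = s_i u'$ with $u' = \wordprod{\Q}{X_2 \setminus \{i\}}$, and $s_{\beta_i} w = w_{i-1} u' = \wordprod{\Q}{X_1 \cup (X_2 \setminus \{i\})}$. Either way $s_{\beta_i} w \leq \demazure{\Q}$ by~\eqref{eq:demazurestrongdef}, so $\beta_i \in \Eplus{w}{\demazure{\Q}}$ as required.

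The main technical obstacle is preserving (I1) and (I2) through the inductive step, requiring case analysis on the six conditions. Setting $\sigma = \demazure{\s_{k+1}\dots\s_m}$, the product $\demazure{\s_k\s_{k+1}\dots\s_m}$ equals either $\sigma$ or $s_k\sigma$. In the skipping cases~(1), (2), and~(3), invariant~(I1) is immediate. For~(I2), case~(3) supplies its own test $w_{k-1}^{-1} w \leq \sigma$ directly; in cases~(1) and~(2), in the nontrivial subcase $\demazure{\s_k\dots\s_m} = s_k\sigma$, we deduce $w_{k-1}^{-1} w \leq \sigma$ from $w_{k-1}^{-1} w \leq s_k\sigma$ via the lifting property~\Cref{lem:liftingproperty}, applied to the fact that $s_k$ is not a left descent of $w_{k-1}^{-1} w$. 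This descent failure holds in case~(1) directly from $w_{k-1} s_k \not\leqweak w$, and in case~(2) by a length calculation using $\beta_k \in \Phiminus$ together with $w_{k-1} \leqweak w$. In the multiplying cases~(5) and~(6), $w_{k-1} s_k \leqweak w$ is part of the condition itself, making~(I1) direct, while~(I2) follows from a dual application of the lifting property. The subtlest case is~(4): when $\beta_k \in \Phiminus$, then $w_{k-1} s_k \precweak w_{k-1} \leqweak w$ already gives~(I1); when $\beta_k \in \Phiplus$ with $f(\beta_k) < 0$, we argue by contradiction that $w_{k-1} s_k \leqweak w$. Indeed, were this to fail, the same subword-product construction used in part~\ref{it:algowelldefined-c} (combining~(I1) and~(I2) at step $k-1$) would yield $s_{\beta_k} w \leq \demazure{\Q}$ and hence $\beta_k \in \Eplus{w}{\demazure{\Q}}$, contradicting $f(\beta_k) < 0$ and the hypothesis $f \geq 0$ on $\Eplus{w}{\demazure{\Q}}$. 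This extraction of combinatorial admissibility in case~(4) purely from the sign hypothesis on~$f$ is where we expect the argument to demand the greatest care.
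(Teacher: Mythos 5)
Your overall strategy (induction over the loop index, maintaining invariants about $w_k$) matches the paper's, and your direct subword-product construction for part~\ref{it:algowelldefined-c} is a nice, more elementary route than the paper's appeal to \Cref{prop:bruhatcoversandrootfunction}. However, there is a genuine gap.

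Your invariants (I1) $w_k \leqweak w$ and (I2) $w_k^{-1} w \leq \demazure{\s_{k+1}\dots\s_m}$ are strictly weaker than the invariant the paper carries, namely that $\Q_{\{1,\dots,k\}\setminus I_k}$ is a \emph{reduced} word for $w_k$. Your claim that (I1)+(I2) at $k=m$ ``yields that $\Q_{\{1,\dots,m\}\setminus I_f}$ is a reduced expression for~$w$'' does not hold: (I2) at $k=m$ gives $w_m=w$, but $\Q_{\{1,\dots,m\}\setminus I_f}$ is reduced only if $\ell(w)$ equals the number of multiplying steps, which in turn requires $\beta_k\in\Phiplus$ at \emph{every} multiplying step. (I1) is perfectly compatible with a multiplying step in which $\beta_k\in\Phiminus$, i.e.\ $w_k\precweak w_{k-1}\leqweak w$, which destroys reducedness.

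This is precisely where the gap lies concretely. In case~(4) you split on the sign of $\beta_k$; for $\beta_k\in\Phiplus$ you deduce $w_{k-1}s_k\leqweak w$ by contradiction (this part is correct), but for $\beta_k\in\Phiminus$ you only observe that (I1) is preserved. You neither verify that (I2) is preserved in that subcase (and it need not be: if $\demazure{\s_k\dots\s_m}=s_k\sigma\succ\sigma$ and $u=w_{k-1}^{-1}w\prec s_k u$, then $u\leq s_k\sigma$ does \emph{not} imply $s_k u\leq\sigma$), nor do you show that the subcase cannot occur. The paper's entire burden here is to prove that case~(4) forces $\beta_k\in\Phiplus$: it notes that $-\beta_k\in\inv(w_{k-1})$ corresponds to an earlier multiplying step $t$ with $\beta_t=-\beta_k$, and that step $t$ cannot have been condition~(6). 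Ruling out condition~(6) at $t$ is the hard step — the paper uses the (reduced-word) extendability invariant to produce a facet $I$ with $\{1,\dots,k\}\cap I = I_{k-1}\cup\{k\}$, then applies \Cref{lem:rootsandflips} to flip $k$ to $t$ and conclude $w_{t-1}^{-1}w\leq\demazure{\Q_{\{t+1,\dots,m\}}}$, contradicting~(6). Your subword-product trick, as set up, does not reach this conclusion, and without it part~\ref{it:algowelldefined-a} is unestablished. To repair the argument you would need to add the reducedness/extendability invariant and carry out this flip argument (or an equivalent).
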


\begin{remark}
  Applying this algorithm for $w \leq \demazure{\Q}$ and a linear functional $f : V \rightarrow \RR$ which is positive on
  \begin{itemize}
    \item the basis $\Delta$ of~$V$, \ie, $f(\alpha_s) > 0$ for all $s \in \sref$, yields the greedy facet~$\greedy$,
    \item the basis $w(\Delta)$ of~$V$, \ie, $f(w(\alpha_s)) > 0$ for all $s \in \sref$, yields the antigreedy facet~$\antigreedy$,
  \end{itemize}
  of $\subwordComplex(\Q,w)$ as seen in \Cref{lem:greedyfacet}.
\end{remark}

In the $k$-th step of \Cref{algo:uniquefacet} we have to choose whether to extend the output set~$I_f$ or to apply the simple reflection~$s_k$ to the element~$w_{k-1}$.
In the former, we aim to ensure that a reduced word for~$w$ still can be obtained after this step.
The following lemma formalizes this condition using Bruhat order.

\begin{lemma}
  \label{lem:extensionofreducedwords}
  Let $\Q = \s_1 \dots \s_m$ and $w \leq \demazure{\Q}$ be an element in~$W$.
  Furthermore let 
  \[
    v = \wordprod{\Q}{\{1,\dots,i\} \setminus X} \leq w
  \]
  for some subset  $X \subseteq \{1,\dots,i\}$ such that $\Q_{\{1,\dots,i\} \setminus X}$ is a reduced word for~$v$.
  Then~$\Q_{\{1,\dots,i\} \setminus X}$ can be extended to a reduced word for~$w$ by a subword of~$\Q_{\{i+1,\dots,m\}}$ if and only if
  \[
    v^{-1} w \leq \demazure{\Q_{\{i+1,\dots,m\}}}.
  \]
\end{lemma}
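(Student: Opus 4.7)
The plan is to prove both directions of the biconditional using the characterization~\eqref{eq:demazurestrongdef} of the Demazure product as the Bruhat-maximum over all subword products of a word. Writing $\Q' := \Q_{\{i+1,\dots,m\}}$ throughout, I would treat the two implications separately.

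For the forward direction, suppose $\Q_{\{1,\dots,i\}\setminus X}$ extends to a reduced word for $w$ via a subword $\sigma$ of $\Q'$ indexed by some $Y \subseteq \{i+1,\dots,m\}$. Since the concatenated word is a reduced expression for $w = v \cdot \wordprod{\Q'}{Y}$, the product of $\sigma$ must equal $v^{-1}w$. Thus $v^{-1}w = \wordprod{\Q'}{Y}$ is exhibited as a subword product of $\Q'$, and~\eqref{eq:demazurestrongdef} immediately yields $v^{-1}w \leq \demazure{\Q'}$.

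For the backward direction, my central tool is the following upgrade of~\eqref{eq:demazurestrongdef}: for any word $\R$ and any $u \in W$, the relation $u \leq \demazure{\R}$ holds if and only if some reduced word for $u$ appears as a subword of $\R$. One direction is just~\eqref{eq:demazurestrongdef}; the other I would prove by a short induction on $|\R|$ using the recursive definition of the Demazure product (separating the inductive step according to whether or not the final letter of $\R$ belongs to the chosen subword), combined with the subword characterization of Bruhat order from~\cite{BB-2005}. Applying this to $\Q'$ and $u = v^{-1}w$ produces a subword $\sigma$ of $\Q'$ that is a reduced word for $v^{-1}w$. Concatenating, the word $\Q_{\{1,\dots,i\}\setminus X}\cdot\sigma$ has product $v\cdot v^{-1}w = w$ and length $\ell(v)+\ell(v^{-1}w)$.

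The only delicate point, and the main obstacle, is verifying that this concatenation is in fact \emph{reduced}, i.e.\ that $\ell(v)+\ell(v^{-1}w)=\ell(w)$, equivalently $v \leqweak w$. In the algorithmic setting where this lemma is invoked this weak-order compatibility is maintained as an invariant of the successive prefixes $w_k$ built by \Cref{algo:uniquefacet}: whenever the algorithm picks up a letter, it does so in a Bruhat-or-weak-ascent direction, so $\Q_{\{1,\dots,i\}\setminus X}$ being a reduced prefix of some reduced word of $w$ inside $\Q$ is preserved. Once this length identity is secured, the concatenated word is a reduced word for $w$ realised entirely inside $\Q$ using positions from $(\{1,\dots,i\}\setminus X)\cup Y$, completing the backward direction.
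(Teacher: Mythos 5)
Your forward direction is correct and matches the only content of the paper's terse proof: an extension exhibits $v^{-1}w$ as a subword product of $\Q_{\{i+1,\dots,m\}}$, so $v^{-1}w\leq\demazure{\Q_{\{i+1,\dots,m\}}}$ follows directly from~\eqref{eq:demazurestrongdef}. Your auxiliary claim for the backward direction (for any word $\R$ and $u\in W$, one has $u\leq\demazure{\R}$ iff some reduced word for $u$ occurs as a subword of $\R$) is also correct and is the standard combination of the recursive Demazure definition with the subword property of Bruhat order.

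However, you have correctly put your finger on a genuine gap, and it is a gap in the lemma as stated, not just in your write-up. The backward direction requires the concatenation $\Q_{\{1,\dots,i\}\setminus X}\cdot\sigma$ to be \emph{reduced}, which forces $\ell(v)+\ell(v^{-1}w)=\ell(w)$, i.e.\ $v\leqweak w$. This is strictly stronger than the stated Bruhat hypothesis $v\leq w$, and without it the backward implication fails. Concretely, in type $A_2$ take $\Q=\s_1\s_2\s_1\s_2$, $w=s_2s_1$, $i=1$, $X=\emptyset$, so $v=s_1$. Then $v\leq w$ in Bruhat order but $v\not\leqweak w$; one has $v^{-1}w=s_1s_2s_1=\wo=\demazure{\s_2\s_1\s_2}=\demazure{\Q_{\{2,3,4\}}}$, so the right-hand condition of the lemma holds, yet $\s_1$ cannot be extended to a reduced word for $s_2s_1$ by any subword of $\s_2\s_1\s_2$, since the unique reduced word $\s_2\s_1$ for $s_2s_1$ does not begin with $\s_1$. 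Your escape hatch — that $v=w_k\leqweak w$ is maintained as an invariant by \Cref{algo:uniquefacet} — is exactly the fact that makes the application in \Cref{prop:algoprops} sound (whenever a letter is picked up one checks or deduces $\beta_k\in\Phiplus$, so the $w_k$ ascend in right weak order, and the invariant that the partial word extends forces $w_k\leqweak w$), but invoking that invariant is repairing the lemma, not proving it. The clean fix is to replace the hypothesis $v\leq w$ by $v\leqweak w$ (or equivalently to add $v\leqweak w$ as a conjunct on the right-hand side of the equivalence), after which your two-directional argument is complete and essentially coincides with what the paper implicitly intends; as written, the paper's one-line ``direct consequence of~\eqref{eq:demazurestrongdef}'' glosses over precisely the reducedness of the concatenation that you flagged.
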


\begin{proof}
  This is a direct consequence of the definition of the Demazure product in terms of Bruhat order as given in~\eqref{eq:demazurestrongdef}.
\end{proof}

The crucial step in the proof of \Cref{prop:algowelldefined} is the following property of the algorithm.

\begin{proposition}
\label{prop:algoprops}
  At the end of the $k$-th iteration of the \emph{for loop}, the word $\Q_{\{1,\dots,k\}\setminus I_k}$ is a reduced word for~$w_k$ and can be extended to a reduced word for~$w$ by a subword of $\Q_{\{k+1\,\dots,m\}}$.
\end{proposition}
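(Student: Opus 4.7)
The plan is to proceed by induction on $k$, with trivial base case $k = 0$: here $I_0 = \emptyset$, $w_0 = \one$, the empty word is reduced for the identity, and by \Cref{lem:extensionofreducedwords} the extendability claim reduces to the standing hypothesis $w \leq \demazure{\Q}$. For the inductive step, \Cref{lem:extensionofreducedwords} identifies the extendability at step $k-1$ with $u := w_{k-1}^{-1} w \leq \sigma_k$, where $\sigma_k := \demazure{\s_k \dots \s_m}$. Setting $\tau_k := \demazure{\s_{k+1}\dots\s_m}$, associativity of the Demazure product gives that $\sigma_k$ equals either $\tau_k$ or $s_k \tau_k$ with $\ell(s_k \tau_k) = \ell(\tau_k) + 1$. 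The argument splits into the six decision-tree cases.

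For the ``skip'' branch (conditions~(1),~(2),~(3)), $w_k = w_{k-1}$ so reducedness is inherited, and the extendability goal is $u \leq \tau_k$. Condition~(3) supplies this as its defining hypothesis. For condition~(1), the combination $\beta_k \in \Phiplus$ and $w_{k-1}s_k \not\leqweak w$ is equivalent to ``$s_k$ is not a left descent of $u$''; the left-multiplication form of the lifting property (\Cref{lem:liftingproperty}) applied to $u \leq s_k \tau_k$ with $\tau_k \prec s_k \tau_k$ then yields $u \leq \tau_k$. For condition~(2), $\beta_k \in \Phiminus$ makes $s_k$ a right descent of $w_{k-1}$, and writing $w = w_{k-1} u$ reducedly together with the factorization $w_{k-1} = (w_{k-1}s_k)\cdot s_k$ forces $\ell(s_k u) = \ell(u) + 1$ by a length count, so the same lifting argument closes the case.

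The ``append'' branch (conditions~(4),~(5),~(6)) sets $w_k = w_{k-1} s_k$, so reducedness requires $\beta_k \in \Phiplus$ and extendability requires both $w_{k-1} s_k \leqweak w$ and $s_k u \leq \tau_k$. Conditions~(5) and~(6) make $\beta_k \in \Phiplus$ and $w_{k-1} s_k \leqweak w$ explicit, giving $s_k u \prec u$. In the nontrivial case $\sigma_k = s_k \tau_k$, applying the lifting property to $s_k u \leq s_k \tau_k$ with $s_k u \prec u$ and $\tau_k \prec s_k \tau_k$ delivers $s_k u \leq \tau_k$ (handling~(5)). For~(6), the explicit $u \not\leq \tau_k$ forces $\sigma_k = s_k \tau_k$ strictly, and the standard Bruhat dichotomy $u \leq s_k \tau_k \Longleftrightarrow (u \leq \tau_k \text{ or } s_k u \leq \tau_k)$ yields $s_k u \leq \tau_k$.

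The main obstacle is condition~(4), whose only trigger $f(\beta_k) < 0$ must yield both $\beta_k \in \Phiplus$ and $w_{k-1}s_k \leqweak w$ drawing only on the hypothesis $f \geq 0$ on $\Eplus{w}{\demazure{\Q}}$. The plan is to strengthen the induction with the invariant that $f$ is non-negative on the Bruhat cone $\Cplus{w_{k-1}}{w_{k-1}\sigma_k}$, propagated step-by-step from $\Cplus{w}{\demazure{\Q}}$ via the cone containments of \Cref{cor:bruhatcone}, \Cref{lem:extendingcovercones}, and \Cref{cor:weakordercplus} according to the skip/append choice recorded at each earlier step. Under this invariant, if $\beta_k \in \Phiplus$ and $w_{k-1}s_k \leqweak w$ both held, then $\beta_k$ would lie in $\Eplus{w_{k-1}}{w_{k-1}\sigma_k}$ and the invariant would force $f(\beta_k) \geq 0$, contradicting the trigger. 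The contrapositive derivation that condition~(4) forces $\beta_k \in \Phiplus$ and $w_{k-1}s_k \leqweak w$ proceeds by ruling out the failure modes: $\beta_k \in \Phiminus$ places $|\beta_k| \in \inv(w_{k-1})$, and a back-tracing argument identifies $|\beta_k| = \beta_{i_j}$ for some earlier ``append'' step $i_j < k$, the recorded sign of $f(\beta_{i_j})$ producing the needed contradiction with $f(|\beta_k|) > 0$; the failure $w_{k-1}s_k \not\leqweak w$ is ruled out by a parallel application of the cone invariant. With $\beta_k \in \Phiplus$ and $w_{k-1}s_k \leqweak w$ in hand, the extendability $s_k u \leq \tau_k$ follows as in condition~(5), closing the induction.
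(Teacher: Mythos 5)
Your proposal follows the paper's scaffolding: induction on~$k$, reformulation of extendability via \Cref{lem:extensionofreducedwords}, and a six-way case split along the decision tree. Your treatments of conditions $(1),(2),(3),(5),(6)$ are correct and essentially equivalent to the paper's (you use the lifting property and a length count where the paper invokes the inversion-set decomposition, but the content is the same). The issue is condition~$(4)$, which is the heart of the proposition and where your argument has two genuine gaps.

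First, your back-tracing step --- ``if $\beta_k \in \Phiminus$ then $|\beta_k| = \beta_t$ for some earlier appended step $t<k$, and the recorded sign of $f(\beta_t)$ gives a contradiction'' --- silently assumes $f(\beta_t) \leq 0$, which only holds if step~$t$ fell under conditions~$(4)$ or~$(5)$. It does not address condition~$(6)$, where $f(\beta_t) > 0$ and there is no contradiction. Ruling out condition~$(6)$ at step~$t$ is precisely the subtle point the paper's proof is built around: the paper constructs a facet $I \in \subwordComplex(\Q,w)$ with $\{1,\dots,k\}\cap I = I_{k-1}\cup\{k\}$, shows via \Cref{lem:rootsandflips} that position~$k$ is flippable to~$t$ in~$I$, and deduces $w_{t-1}^{-1}w \leq \demazure{\Q_{\{t+1,\dots,m\}}}$ --- exactly the statement that precludes condition~$(6)$. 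Your proposal omits this entirely, so the reducedness argument for case~$(4)$ is incomplete.

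Second, the proposed cone invariant ``$f \geq 0$ on $\Cplus{w_{k-1}}{w_{k-1}\sigma_k}$'' cannot work. At the base step (where $w_0 = \one$ and $\sigma_1 = \demazure{\Q}$) it reads ``$f \geq 0$ on $\Cplus{\one}{\demazure{\Q}}$'', a strictly stronger statement than the given hypothesis ``$f \geq 0$ on $\Cplus{w}{\demazure{\Q}}$'' and not a consequence of it. Moreover, \Cref{cor:bruhatcone}, \Cref{lem:extendingcovercones}, and \Cref{cor:weakordercplus} give containments in which the natural cone attached to the remaining problem \emph{grows} when a letter is appended, so non-negativity is only inherited in the backward direction, not forward as an inductive invariant would require. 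Finally, the logic surrounding the invariant is internally inconsistent: you argue that, under the invariant, $\beta_k \in \Phiplus$ and $w_{k-1}s_k \leqweak w$ would force $f(\beta_k)\geq 0$, ``contradicting the trigger'' --- yet those are the very two facts the same paragraph then aims to establish for condition~$(4)$. The paper avoids this entirely: it never uses a running cone invariant, and the needed facts about $\beta_k$ in case~$(4)$ come from the subword-complex flippability machinery, not from Bruhat cone propagation.
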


\begin{proof}
  Let $\Q = \s_1 \dots \s_m$.
  \Cref{lem:extensionofreducedwords} implies that if $\Q_{ \{1,\dots,k\} \setminus I_k}$ is a reduced word for~$w_k$, then it can be extended to a reduced word for~$w$ by a subword of~$\Q_{\{k+1,\dots,m\}}$ if and only if
  \begin{equation}\label{eq:algoprops}
    w_k^{-1} w \leq \demazure{\Q_{\{k+1,\dots,m\}}}.\tag{$\ast$}
  \end{equation}
  We prove in parallel by induction on the loop index~$k$ that the word $\Q_{\{1,\dots,k\}\setminus I_k}$ is reduced and that property~\eqref{eq:algoprops} holds.
  This is, we show that if these two properties are satisfied for all indices $t < k$, than these are also satisfied for the index~$k$.
  Since the two properties are satisfied before the first loop starts (this is, for $k=0$), the statement then follows.

  \medskip

  For step~$k\geq 1$ we assume by induction that $\Q_{\{1,\dots,k-1\} \setminus I_{k-1}}$ is a reduced word for~$w_{k-1}$ and that $(w_{k-1})^{-1} w \leq \demazure{\Q_{\{k,\dots,m\}}}$.

  \medskip

  We first show that the word $\Q_{\{1,\dots,k-1\} \setminus I_{k-1}}$ is reduced.
  In the case of conditions $(1),(2)$ and $(3)$, we have $\Q_{\{1,\dots,k\} \setminus I_{k}} = \Q_{\{1,\dots,k-1\} \setminus I_{k-1}}$ is reduced.
  In the case of conditions $(5)$ and $(6)$, we have $w_{k-1}( \alpha_{s_k} ) = \beta_k \in \Phiplus$ hence $w_k = w_{k-1}\cdot s_k$ is reduced.
  It remains to show the case of condition~$(4)$, \ie, $f(\beta_k) < 0$, where we need to show $\beta_k \in \Phiplus$.

  By contraposition, we may assume $\beta_k \in \Phiminus$ and have to show that $f(\beta_k) \geq 0$.
  As~$w_{k-1}$ is reduced and $\beta_k = w_{k-1}(\alpha_{s_k})$ we have $-\beta_k \in \inv(w_{k-1})$.
  Therefore there is an index $t<k$ such that
  \[
    w_{t-1}(\alpha_{s_t}) = \beta_t = -\beta_k
  \]
  and at step~$t$ we were in either of the cases of conditions $(4),(5)$ or $(6)$.
  If we were in the cases of conditions $(4)$ or $(5)$, then $f(\beta_t) \leq 0$ and hence $f(\beta_k) \geq 0$ as desired.

  We conclude that proof of reducedness by showing that it was impossible to be in the case of condition~$(6)$ at step~$t$.
  By induction, the reduced word $\Q_{\{1,\dots,k-1\} \setminus I_{k-1}}$ for~$w_{k-1}$ can be extended in $\Q_{\{k,\dots,m\}}$ to a reduced word for~$w$.
  Since $\beta_k \in \Phiminus$, this extended reduced word cannot use the letter~$\s_k$.
  Thus there exists a facet $I \in \subwordComplex(\Q,w)$ such that
  \[
    \{1,\dots,k\} \cap I = I_{k-1} \cup \{k\}.
  \]
  But as $-\beta_k \in \inv(w_{k-1}) \subseteq \inv(w)$, \Cref{lem:rootsandflips} implies that the index~$k \in I$ is flippable to~$t$ in this facet.
  In other words, the flip of~$k$ in~$I$ gives the facet $(I \setminus \{k\}) \cup \{t\}$.
  Now this in particular implies that $\Q_{\{t+1,\dots,m\}}$ contains a reduced word for $w_{t-1}^{-1} w$, hence
  \[
    w_{t-1}^{-1} w \leq \demazure{\Q_{\{t+1,\dots,m\}}}
  \]
  and we were not in the case of condition~$(6)$.
  
  \medskip
  
  We conclude with showing that also \eqref{eq:algoprops} holds for the index~$k$.
  If we are in one of the cases of conditions $(4),(5)$ or $(6)$, then \eqref{eq:algoprops} holds by~\eqref{eq:demazurestrongdef}.
  If we are in the case of condition~$(3)$, we have $w_k = w_{k-1}$ and \eqref{eq:algoprops} is part of the condition.
  
  If we are in the case of condition~$(2)$, we have $\beta_k = w_{k-1}(\alpha_{s_k}) \in \Phiminus$.
  But as we have the decomposition
  \begin{equation}
    \label{eq:algoprops2}
    \inv(w) = \inv(w_{k-1}) \sqcup w_{k-1} \cdot \inv(w_{k-1}^{-1} w), \tag{$\ast\ast$}
  \end{equation}
  we have $\alpha_{s_k} \not\in \inv(w_{k-1}^{-1} w)$, hence no reduced word for $w_{k-1}^{-1} w$ starts with~$\s_k$.
  Therefore $w_k^{-1} w = w_{k-1}^{-1} w \leq \demazure{\Q_{\{k+1,\dots,m\}}}$.

  If we are in the case of condition~$(1)$, we have $w_{k-1} s_k \not\leqweak w$, hence $\beta_k \not\in \inv(w)$.
  Thus again by \eqref{eq:algoprops2} we get $\alpha_{s_k} \not\in \inv(w_{k-1}^{-1} w)$, hence no reduced word for $w_{k-1}^{-1} w$ starts with~$\s_k$ and the same argument applies.
\end{proof}

\begin{proof}[Proof of \Cref{prop:algowelldefined}]
  Applying \Cref{prop:algoprops} for $k=m$, this is at the end of the last iteration of the \emph{for loop}, the word $\Q \setminus I_m$ is a reduced word for~$w = w_m$.
  This implies \ref{it:algowelldefined-a}.

  For \ref{it:algowelldefined-b} it is immediate from the definition that the root configuration of the facet~$I_f$ is
  \[
    \Roots{I_f} = \multiset{\beta_k}{k \in I_f}.
  \]
  The algorithm only adds an index~$k$ to the facet if and only if one of the conditions $(1),(2)$ or $(3)$ is fulfilled.
  Since these conditions all contain the condition $f(\beta_k) \geq 0$, we conclude \ref{it:algowelldefined-b}.

  For \ref{it:algowelldefined-c} we show that if $f(\beta_k) = 0$ and $\beta_k \in \Phiplus$ for some $k \in I_f$, then $\beta_k \in \Eplus{w}{\demazure{\Q}}$.
  This means that adding~$k$ to~$I_f$ was done in the case of condition~$(1)$.
  We thus have $w_{k-1} s_k \not\leqweak w$.
  As seen before this implies $\beta_k \not\in \inv(w)$, which means that the index~$k$ is not flippable in~$I_f$.
  We conclude the proof by invoking \Cref{prop:bruhatcoversandrootfunction}.
\end{proof}

\begin{example}[Type $B_2$]
  Let $\Q = 21122112$ with $\demazure{\Q} = 1212 = 2121 = \wo$ and let $w = 12$.
  Let furthermore $f : V \rightarrow \RR$ be the linear functional sending $\del{10}$ to~$-2$ and $\del{01}$ to~$1$.
  We check that this linear functional is non-negative on
  \[
  \Eplus{w}{\demazure{\Q}} = \{ \del{01}, \del{12} \}.
  \]
  The steps in \Cref{algo:uniquefacet} are then shown in the following table:
  \begin{center}
    \resizebox{\textwidth}{!}{
      \begin{tabular}{lll}
        $m=1: \ \beta_1 = \del{01}, \ f(\beta_1) > 0, \ \beta_1 \in \Phiplus, \ s_2 \not\leqweak w$ & $\leadsto (1): \ I_1 = \{1\},$ & $w_1 = e,$ \\

        $m=2: \ \beta_2 = \del{10}, \ f(\beta_2) < 0$ &  $\leadsto (4): \ I_2 = \{1\},$ & $w_2 = s_1,$ \\

        $m=3: \ \beta_3 = \del{\mi{1}0}, \ f(\beta_3) > 0, \ \beta_3 \in \Phiminus$ &  $\leadsto (2): \ I_3 = \{1,3\},$ & $w_3 = s_1,$ \\

        $m=4: \ \beta_4 = \del{11}, \ f(\beta_4) < 0$ &  $\leadsto (4): \ I_4 = \{1,3\},$ & $w_4 = s_1s_2,$ \\

        $m=5: \ \beta_5 = \del{\mi{11}}, \ f(\beta_5) > 0, \ \beta_5 \in \Phiminus$ &  $\leadsto (2): \ I_5 = \{1,3,5\},$ & $w_5 = s_1s_2,$ \\

        $m=6: \ \beta_6 = \del{12}, \ f(\beta_6) = 0, \ \beta_6 \in \Phiplus, \ s_1s_2s_1 \not\leqweak w$ &  $\leadsto (1): \ I_6 = \{1,3,5,6\},$ & $w_6 = s_1s_2,$ \\

        $m=7: \ \beta_7 = \del{\mi{11}}, \ f(\beta_7) > 0, \ \beta_7 \in \Phiminus$ &  $\leadsto (2): \ I_7 = \{1,3,5,6,7\},$ & $w_7 = s_1s_2,$ \\

        $m=8: \ \beta_8 = \del{12}, \ f(\beta_8) = 0, \ \beta_8 \in \Phiplus, \ s_1s_2s_1 \not\leqweak w$ &  $\leadsto (1): \ I_8 = \{1,3,5,6,7,8\},$ & $w_8 = s_1s_2$. \\ \\
      \end{tabular}
    }
  \end{center}
  The algorithm finally returns $I_f = I_8 = \{1,3,5,6,7,8\}$ with root configuration
  \[
  \Roots{I_f} = \multibracket{\del{01},~ \del{\mi{1}0},~ \del{\mi{11}},~ \del{12},~ \del{\mi{11}},~ \del{12} },
  \]
  and \Cref{prop:algowelldefined}\ref{it:algowelldefined-b} and \ref{it:algowelldefined-c} are both satisfied, as expected.
\end{example}

We prove in the following section, see \Cref{thm:uniquefantigreedyfacet}, that the facet~$I_f$ is uniquely determined by the two conditions given in \Cref{prop:algowelldefined}\ref{it:algowelldefined-b} and \ref{it:algowelldefined-c}.

\subsection{Uniqueness of $f$-antigreedy facets}
\label{sec:uniqueantigreedyfacets}

We show in this section that the $f$-antigreedy facet~$I_f$ in \Cref{algo:uniquefacet} is unique in the following sense.

\begin{theorem}
\label{thm:uniquefantigreedyfacet}
  Let~$I$ be a facet of the non-empty subword complex $\subwordComplex(\Q,w)$ and let $f : V \rightarrow \RR$ be a linear functional that is non-negative for the Bruhat interval $[w,\demazure{\Q}]$.
  If the facet~$I$ satisfies the conclusions in \Cref{prop:algowelldefined}\ref{it:algowelldefined-b} and \ref{it:algowelldefined-c}, then $I = I_f$ is the facet produced by \Cref{algo:uniquefacet}.
\end{theorem}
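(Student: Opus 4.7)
The plan is to prove the theorem by induction on the step index $k\in\{1,\dots,m\}$, establishing that every facet $I$ of $\subwordComplex(\Q,w)$ satisfying conditions~(b) and~(c) of \Cref{prop:algowelldefined} must have $I\cap\{1,\dots,k\}=I_k$, the partial set produced after the $k$-th iteration of \Cref{algo:uniquefacet}. The base case $k=0$ is trivial. For the inductive step the hypothesis $I\cap\{1,\dots,k-1\}=I_{k-1}$ ensures that $I$ and $I_f$ share the same running element $w_{k-1}$ and hence the same root $\beta_k=w_{k-1}(\alpha_{s_k})=\Root{I}{k}$. The task is to verify that the inclusion of $k$ is forced in each of the six conditions of \Cref{algo:uniquefacet}.

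Five of the six conditions are dispatched directly. In condition~(4), the assumption $k\in I$ contradicts~(b) via $f(\Root{I}{k})=f(\beta_k)<0$. In conditions~(2) and~(1), the assumption $k\notin I$ would force $w_{k-1}s_k$ to be a reduced prefix of a reduced word for $w$, which respectively contradicts $\beta_k\in\Phiminus$ and $w_{k-1}s_k\not\leqweak w$. In condition~(6), the assumption $k\in I$ would force the suffix $\Q_{\{k+1,\dots,m\}\setminus I}$ to be a reduced expression for $w_{k-1}^{-1}w$ inside $\Q_{\{k+1,\dots,m\}}$, contradicting $w_{k-1}^{-1}w\not\leq\demazure{\Q_{\{k+1,\dots,m\}}}$. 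In condition~(5), the assumption $k\in I$ combined with $f(\beta_k)=0$, $\beta_k\in\Phiplus$ and~(c) forces $\beta_k\in\Eplus{w}{\demazure{\Q}}$; however $w_{k-1}s_k\leqweak w$ implies $\beta_k\in\inv(w)$, so by~\eqref{eq:inversionandbruhat} we have $s_{\beta_k}w\prec w$, contradicting the requirement $w\prec s_{\beta_k}w$ for membership in $\Eplus{w}{\demazure{\Q}}$.

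The main obstacle is condition~(3), in which $f(\beta_k)>0$, $\beta_k\in\Phiplus\cap\inv(w)$, and both $k\in I$ and $k\notin I$ are compatible with the facet structure. Assuming $k\notin I$, I would apply \Cref{lem:rootsandflips}: every flip partner $i\in I$ of the complement position $k$ satisfies $\Root{I}{i}=\pm\beta_k$, and a partner with $i>k$ gives $\Root{I}{i}=-\beta_k$ with $f(-\beta_k)<0$, directly contradicting~(b). When $k$ has no flip partner in $I$, I would restrict $I$ to the suffix subword complex $\subwordComplex(\Q_{\{k+1,\dots,m\}},s_kw_{k-1}^{-1}w)$; the ambient root configuration restricted to indices above $k$ equals $w_{k-1}s_k$ applied to the suffix root configuration, so condition~(b) for $I$ above $k$ translates into non-negativity of $f\circ w_{k-1}s_k$ on the suffix. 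Using the cone containment results of \Cref{sec:conesinbruhatintervals}, in particular \Cref{cor:weakordercplus}, to verify that $f\circ w_{k-1}s_k$ remains non-negative on $\Eplus{s_kw_{k-1}^{-1}w}{\demazure{\Q_{\{k+1,\dots,m\}}}}$, the inductive hypothesis applied to the suffix produces a position $l>k$ where $\Root{I}{l}$ is negative with $f<0$, again violating~(b). The remaining sub-case, where every partner of $k$ lies strictly below $k$, is handled by a descent argument: the equality $\Root{I}{i}=\beta_k=\beta_i$ for $i<k$ combined with the root-transformation formula \Cref{lem:rootsandflips}\ref{eq:rootsbyflips} lets me reflect the suffix root configuration through $s_{\beta_k}$ and reduce to one of the two previous sub-cases at a strictly smaller index, completing the induction.

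I expect the bookkeeping in condition~(3) to be the main technical hurdle, specifically the correct transfer of the non-negativity of $f$ on $\Eplus{w}{\demazure{\Q}}$ to that of $f\circ w_{k-1}s_k$ on the analogous cone of the suffix complex via \Cref{cor:weakordercplus}, together with the precise matching of flip partners between the ambient and suffix complexes through the recursive structure recorded in \Cref{sec:subwordcomplexes}.
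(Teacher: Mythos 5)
Your handling of conditions $(1),(2),(4),(5),(6)$ is correct and essentially matches the paper's: the first two cases rule out $k\notin I$ on reducedness/weak-order grounds, $(4)$ contradicts (b), $(5)$ contradicts (c), and $(6)$ contradicts \Cref{lem:extensionofreducedwords}. The gap is in condition $(3)$, which is in fact the only interesting case, and your treatment of it does not work.

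First, the claim that $f\circ w_{k-1}s_k$ ``remains non-negative on $\Eplus{s_kw_{k-1}^{-1}w}{\demazure{\Q_{\{k+1,\dots,m\}}}}$'' is false, and importantly so: since $w_{k-1}s_k\leqweak w$ gives $\alpha_{s_k}\in\inv(w_{k-1}^{-1}w)$, we have $s_kw_{k-1}^{-1}w\prec w_{k-1}^{-1}w\leq\demazure{\Q_{\{k+1,\dots,m\}}}$, hence $\alpha_{s_k}\in\Eplus{s_kw_{k-1}^{-1}w}{\demazure{\Q_{\{k+1,\dots,m\}}}}$, while $(f\circ w_{k-1}s_k)(\alpha_{s_k}) = -f(\beta_k)<0$. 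So the functional you would need to feed to any inductive hypothesis on the suffix complex is \emph{not} admissible for that complex's Bruhat interval, and the appeal to ``the inductive hypothesis applied to the suffix'' is not well-posed. That failure of non-negativity is not a nuisance to be repaired: it is precisely the lever the paper pulls. The paper's argument (packaged in \Cref{cor:noposfacet}, built on \Cref{lem:q1notflipable}, \Cref{prop:nonflipablerootincone} and \Cref{lem:coneandfunctional}) observes that $\alpha_{s_k}$ lies in $\cone\Roots{K}$ for \emph{every} facet $K$ of the suffix complex, so the negativity of $f\circ w_{k-1}s_k$ on $\alpha_{s_k}$ forces every suffix root configuration to contain a root with negative $f\circ w_{k-1}s_k$-value, hence $\subwordComplex_{f\circ w_{k-1}s_k}(\Q_{\{k+1,\dots,m\}},s_kw_{k-1}^{-1}w)=\emptyset$, hence $I\notin\subwordComplex_f(\Q,w)$.

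Second, your flip-partner case split does not converge. The only clean sub-case is a partner $i>k$, where $\Root{I}{i}=-\beta_k$ immediately breaks (b); but there is no reason the violating position must be a flip partner of $k$ at all, so the split is structurally the wrong handle. The ``descent argument'' when all partners lie below $k$ is not made precise: a partner $i<k$ has $\Root{I}{i}=\beta_i=\beta_k$, which is \emph{consistent} with (b), and ``reflecting the suffix root configuration through $s_{\beta_k}$'' moves you to a different facet that no longer agrees with $I_f$ on $\{1,\dots,k-1\}$, so the induction does not close. The paper avoids all of this by treating the suffix complex uniformly; the real induction is the one on the length of $\Q$ inside \Cref{prop:nonflipablerootincone}, not an induction on the step index $k$.
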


Before proving this theorem, we also provide one important corollary and recast the two properties \Cref{prop:algowelldefined}\ref{it:algowelldefined-b} and \ref{it:algowelldefined-c} into subword complex terms as follows.
For a facet $I \in \subwordComplex(\Q,w)$ and $i \in I$ flippable we call the flip of~$i \in I$ \Dfn{$f$-preserving} if $f(\Root{I}{i}) = 0$, and denote by
\[
  \subwordComplex_f(\Q,w) = \bigset{I \text{ facet of } \subwordComplex(\Q,w)}{\forall i \in I : f( \Root{I}{i} ) \geq 0}
\]
the set of facets whose root configuration is contained in the closed positive halfspace defined by~$f$.
For such a facet~$I$ of $\subwordComplex_f(\Q,w)$ denote moreover by
\[
  \RootsPos{I}{f} = \set{i \in I}{f( \Root{I}{i} ) > 0}
\]
those positions in~$I$ for which the root function is $f$-positive.
A facet $I \in \subwordComplex(\Q,w)$ then satisfies \Cref{prop:algowelldefined}\ref{it:algowelldefined-b} if and only if $I \in \subwordComplex_f(\Q,w)$ and it satisfies \Cref{prop:algowelldefined}\ref{it:algowelldefined-c} if and only if
\[
  \set{\Root{I}{i}}{i \in I \setminus \RootsPos{I}{f}} \subseteq \Phiminus \cup \Eplus{w}{\demazure{\Q}}.
\]

\begin{proposition}
  Let $\subwordComplex(\Q,w)$ be a non-empty subword complex and let $f : V \rightarrow \RR$ be a linear functional.
  Then
  \[
    \subwordComplex_f(\Q,w) \text{ is non-empty } \Leftrightarrow f \text{ is non-negative on } [w,\demazure{\Q}].
  \]
\end{proposition}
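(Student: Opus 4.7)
The plan is to prove both implications using results already established in the excerpt. The two directions use fundamentally different ingredients: the forward implication $(\Rightarrow)$ rests on the containment property $\Eplus{w}{\demazure{\Q}} \subseteq \cone(\Roots{I})$ from \Cref{prop:nonflipablerootincone}, while the backward implication $(\Leftarrow)$ is essentially a restatement of the existence guaranteed by \Cref{algo:uniquefacet} together with \Cref{prop:algowelldefined}.

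For the forward direction, I would pick any facet $I \in \subwordComplex_f(\Q,w)$ and fix an arbitrary cover label $\beta \in \Eplus{w}{\demazure{\Q}}$. By \Cref{prop:nonflipablerootincone}, $\beta \in \cone(\Roots{I})$, so there exist non-negative coefficients $\lambda_i \geq 0$ with $\beta = \sum_{i \in I} \lambda_i \Root{I}{i}$. By the defining property of $\subwordComplex_f(\Q,w)$, we have $f(\Root{I}{i}) \geq 0$ for every $i \in I$. Linearity of $f$ then yields $f(\beta) = \sum_i \lambda_i f(\Root{I}{i}) \geq 0$, which establishes non-negativity of $f$ on $\Eplus{w}{\demazure{\Q}}$ and hence on $[w,\demazure{\Q}]$.

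For the backward direction, assume $f$ is non-negative on $[w,\demazure{\Q}]$. Then $f$ satisfies exactly the hypothesis required as input to \Cref{algo:uniquefacet}, so we may run the algorithm to produce the set $I_f \subseteq \{1,\dots,m\}$. By \Cref{prop:algowelldefined}\ref{it:algowelldefined-a}, $I_f$ is a facet of $\subwordComplex(\Q,w)$, and by part~\ref{it:algowelldefined-b} of the same theorem, $f(\Root{I_f}{i}) \geq 0$ for every $i \in I_f$. This is precisely the condition defining membership in $\subwordComplex_f(\Q,w)$, so $I_f \in \subwordComplex_f(\Q,w)$ and the set is non-empty.

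I do not expect significant obstacles here: both directions are short consequences of previously proven results, and the proof is essentially a bookkeeping combination of \Cref{prop:nonflipablerootincone} with the output guarantees of the $f$-antigreedy construction. If anything, the subtle point is simply to verify that the hypothesis ``$f$ non-negative on $[w,\demazure{\Q}]$'' in the proposition matches verbatim the input condition ``$f(\beta) \geq 0$ for $\beta \in \Eplus{w}{\demazure{\Q}}$'' of \Cref{algo:uniquefacet}, which it does by the definition of non-negativity for a Bruhat interval given at the start of \Cref{sec:algorithm}.
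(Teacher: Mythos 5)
Your proof is correct and follows essentially the same route as the paper: the backward direction is verbatim the paper's argument (run \Cref{algo:uniquefacet} and invoke \Cref{prop:algowelldefined}), while the forward direction uses the same key ingredient (\Cref{prop:nonflipablerootincone}) as the paper, differing only in that you argue directly where the paper argues by contraposition.
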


\begin{proof}
  If~$f$ is non-negative on $[w,\demazure{\Q}]$ then the facet $I_f$ generated by \Cref{algo:uniquefacet} is in $\subwordComplex_f(\Q,w)$ by \Cref{prop:algowelldefined}.
  Otherwise if~$f$ is not non-negative, \Cref{prop:nonflipablerootincone} ensures that every facet $I \in \subwordComplex(\Q,w)$ contains at least one position $i \in I$ for that $f( \Root{I}{i} ) < 0$ and thus $\subwordComplex_f(\Q,w)$ is empty.
\end{proof}

The following crucial corollary of \Cref{thm:uniquefantigreedyfacet} generalizes \cite[Conjecture~7.1]{Pilaud-Stump-2015}.

\begin{corollary}
\label{prop:connectedcomp}
  Let $\subwordComplex(\Q,w)$ be a non-empty subword complex and let $f : V \rightarrow \RR$ be a linear functional which is non-negative on $[w,\demazure{\Q}]$.
  Then $\subwordComplex_f(\Q,w)$ forms a connected component of the graph of $f$-preserving flips in $\subwordComplex(\Q,w)$ and moreover,
  \[
    \subwordComplex_f(\Q,w) \cong \subwordComplex(\Q_{\{1,\dots,m\} \setminus \RootsPos{I}{f}}, w)
  \]
  for any facet $I \in \subwordComplex_f(\Q,w)$.
\end{corollary}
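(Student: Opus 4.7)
The plan is to split the argument into two stages. In the first, I show that $f$-preserving flips preserve both membership in $\subwordComplex_f(\Q,w)$ and the set $\RootsPos{\cdot}{f}$, so that $\subwordComplex_f(\Q,w)$ decomposes into connected components of the $f$-preserving flip graph, on each of which $\RootsPos{\cdot}{f}$ is a constant set. In the second, I use \Cref{thm:uniquefantigreedyfacet} to show that only one such component occurs, and I construct the claimed isomorphism along the way.

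For the first stage, let $i\in I$ be flippable to $j$, yielding the facet $J$, with $\beta:=\Root{I}{i}$ satisfying $f(\beta)=0$. Since $s_\beta(v)-v\in\RR\beta$ for every $v\in V$, the assumption $f(\beta)=0$ forces $f\circ s_\beta=f$. The transformation rule in \Cref{lem:rootsandflips}\ref{eq:rootsbyflips} then gives $f(\Root{J}{k})=f(\Root{I}{k})$ for every $k\in I\cap J$, and $f(\Root{J}{j})=\pm f(\beta)=0$, so both $J\in\subwordComplex_f(\Q,w)$ and $\RootsPos{J}{f}=\RootsPos{I}{f}$.

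For the second stage, fix any $I'\in\subwordComplex_f(\Q,w)$, let $\mathcal{C}$ be its $f$-preserving flip component, and set $P:=\RootsPos{I'}{f}$ and $\mathcal{SC}':=\subwordComplex(\Q_{\{1,\dots,m\}\setminus P},w)$. Since $P\subseteq I'$, the complement of $I'$ in $\{1,\dots,m\}\setminus P$ coincides with the complement of $I'$ in $\{1,\dots,m\}$ — a reduced word for $w$ — so $\mathcal{SC}'$ is non-empty and the forgetful map $\phi:I\mapsto I\setminus P$ is a well-defined injection $\mathcal{C}\to\mathcal{SC}'$. For $k\notin P$ the identity $\{1,\dots,k-1\}\setminus I=\{1,\dots,k-1\}\setminus I\setminus P$ shows that the root function is preserved under $\phi$. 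Surjectivity follows from the known fact that the flip graph of any subword complex is connected, together with the first stage: any flip in $\mathcal{SC}'$ lifts to a flip at the same position $i\in\{1,\dots,m\}\setminus P$ in $\subwordComplex(\Q,w)$, which is $f$-preserving because $i\notin P$ forces $f(\Root{I}{i})=0$ at the corresponding facet of $\mathcal{C}$, and by the first stage the lift remains in $\mathcal{C}$.

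It remains to show $\mathcal{C}=\subwordComplex_f(\Q,w)$, which I would establish by proving $I_f\in\mathcal{C}$. Applying \Cref{algo:uniquefacet} inside $\mathcal{SC}'$ with the same functional $f$ is legitimate since $\demazure{\Q_{\{1,\dots,m\}\setminus P}}\leq\demazure{\Q}$ gives the inclusion $\Eplus{w}{\demazure{\Q_{\{1,\dots,m\}\setminus P}}}\subseteq\Eplus{w}{\demazure{\Q}}$, so $f$ remains non-negative on the relevant interval; let $K$ denote the output facet. Verifying the two conclusions of \Cref{prop:algowelldefined} for the lift $K\cup P$ inside $\subwordComplex(\Q,w)$ — part~(b) using the root-function compatibility at positions outside $P$ together with the strict positivity of $f$ on the $P$-roots throughout $\mathcal{C}$, and part~(c) using the same $\Eplus$-inclusion — lets me conclude by the uniqueness statement of \Cref{thm:uniquefantigreedyfacet} that $K\cup P=I_f$. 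Hence $I_f\in\mathcal{C}$ for every choice of $I'$, so $\subwordComplex_f(\Q,w)$ equals the single component $\mathcal{C}$ and $\phi$ realises the claimed isomorphism. The main obstacle throughout is the bookkeeping across the two complexes, in particular showing that the lifted algorithm output $K\cup P$ inherits properties~(b) and~(c) on the larger complex, since this is what triggers the uniqueness argument and closes the loop.
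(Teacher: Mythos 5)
Your argument is correct, and the overall strategy is the same as the paper's (stage one is identical; both ultimately reduce to \Cref{thm:uniquefantigreedyfacet}), but the order of operations in stage two is genuinely rearranged, and the rearrangement is not cosmetic. The paper first claims that from any $I\in\subwordComplex_f(\Q,w)$ one can reach, by $f$-preserving flips, a facet $K$ satisfying the two defining conclusions, and concludes connectivity from the uniqueness of that facet; only afterward does it establish the isomorphism with $\subwordComplex(\Q_{\{1,\dots,m\}\setminus\RootsPos{I}{f}},w)$ by inserting the positions $\RootsPos{I}{f}$ and invoking flip-connectedness of the smaller complex. The ``we can reach a facet $K$'' step is left unjustified in the paper; you instead build the isomorphism $\phi$ on a single $f$-preserving component $\mathcal{C}$ first, run \Cref{algo:uniquefacet} inside the smaller complex $\mathcal{SC}'$, and pull its output back along $\phi^{-1}$, using the inclusion $\Eplus{w}{\demazure{\Q_{\{1,\dots,m\}\setminus P}}}\subseteq\Eplus{w}{\demazure{\Q}}$ and the root-function compatibility at positions outside $P$ to transfer conclusions \ref{it:algowelldefined-b} and \ref{it:algowelldefined-c} to the lift $K\cup P$. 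This gives an explicit witness that every component contains $I_f$, which is cleaner than the paper's implicit termination argument.

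Two small remarks on presentation. First, in stage two you justify part (b) for $K\cup P$ ``using the strict positivity of $f$ on the $P$-roots throughout $\mathcal{C}$''; to avoid the appearance of circularity you should note explicitly that $K\cup P=\phi^{-1}(K)\in\mathcal{C}$ is already secured by the bijectivity of $\phi$, so membership in $\subwordComplex_f(\Q,w)$ (hence conclusion (b)) is automatic and only (c) requires the transfer argument. Second, your surjectivity argument for $\phi$ (``at the corresponding facet of $\mathcal{C}$'') is really an induction along a flip path in $\mathcal{SC}'$ starting at $I'\setminus P$; stating it as such would make clear that the flip position and target agree in the two complexes because the complements of $M_l$ in $\{1,\dots,m\}\setminus P$ and of $M_l\cup P$ in $\{1,\dots,m\}$ coincide, so \Cref{lem:rootsandflips} identifies the same index $j$.
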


In the remainder of this section, we prove \Cref{thm:uniquefantigreedyfacet} and \Cref{prop:connectedcomp}, and start with collecting several technical lemmas.

\begin{lemma}
  \label{lem:q1notflipable}
  Let $\subwordComplex(\Q,w)$ be a non-empty subword complex with $\Q = \s\ \s_2\dots\s_m$.
  The following are equivalent:
  \begin{itemize}
    \item There exist facets $I,J \in \subwordComplex(\Q,w)$ with $1 \in I$ and $1 \not\in J$.
    \item There exists a facet $K \in \subwordComplex(\Q_{\{2,\dots,m\}}, s w)$ with $\alpha_{s} \in \Roots{K}$, and $s w \prec w$.
  \end{itemize}
\end{lemma}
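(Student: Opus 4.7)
I would invoke the canonical recursive decomposition recalled in the proof of \Cref{prop:nonflipablerootincone}: writing $\Q' = \s_2\dots\s_m$,
\[
\subwordComplex(\Q,w) = \bigl(\{1\}\star \subwordComplex(\Q',w)\bigr)\sqcup \subwordComplex(\Q',sw),
\]
where the second summand is present only when $sw\prec w$. Under this decomposition, facets of $\subwordComplex(\Q,w)$ containing~$1$ correspond bijectively to facets of $\subwordComplex(\Q',w)$, while facets not containing~$1$ are exactly the facets of $\subwordComplex(\Q',sw)$. The first small piece to pin down is the compatibility between the two root functions: for a facet $K\subseteq\{2,\dots,m\}$ of $\subwordComplex(\Q',sw)$, viewed also as a facet of $\subwordComplex(\Q,w)$, the prefix product up to any $k\in K$ gains an extra leading factor~$s$, so the root at $k$ relative to $\subwordComplex(\Q,w)$ is obtained from the root at $k$ relative to $\subwordComplex(\Q',sw)$ by applying $s$. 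In particular, $\alpha_s\in\Roots{K}$ in $\subwordComplex(\Q',sw)$ if and only if $-\alpha_s$ appears among the roots of $K$ in $\subwordComplex(\Q,w)$.

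For the forward direction, the existence of $J$ with $1\notin J$ forces $sw\prec w$ through the decomposition. Starting from $I$ with $1\in I$, we have $\Root{I}{1}=\alpha_s\in\inv(w)$ by \eqref{eq:inversionandbruhat}, so position~$1$ is flippable in $I$ by \Cref{lem:rootsandflips}\ref{eq:rootsbyflips}, with flip partner $j>1$ (the flip is increasing since $\Root{I}{1}$ is positive). The resulting facet $I''$ satisfies $1\notin I''$ and $\Root{I''}{j}=-\alpha_s$ by the sign rule of the flip formula. Translating through the compatibility, $K := I''$ is the required facet of $\subwordComplex(\Q',sw)$ with $\alpha_s\in\Roots{K}$.

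For the converse, I would set $J := K$ directly. The position $k\in K$ realizing $\alpha_s\in\Roots{K}$ in $\subwordComplex(\Q',sw)$ carries root $-\alpha_s$ relative to $\subwordComplex(\Q,w)$, and since $sw\prec w$ yields $\alpha_s\in\inv(w)$, \Cref{lem:rootsandflips}\ref{eq:rootsbyflips} shows that $k$ is flippable in $K$ (as a facet of $\subwordComplex(\Q,w)$). The uniqueness clause of that lemma identifies the flip partner as the unique index $j\notin K$ with $\Root{K}{j}=\alpha_s$, which forces $j=1$ since $1$ lies in the complement of $K$ and $\Root{K}{1}=\alpha_s$ by definition of the root function. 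Hence $I := (K\setminus\{k\})\cup\{1\}$ is a facet of $\subwordComplex(\Q,w)$ containing~$1$, as required.

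The only delicate point is the sign bookkeeping: the $s$-equivariance linking the two root functions combined with the increasing/decreasing sign rule in \Cref{lem:rootsandflips}. Both are short and local once the recursive decomposition is made explicit; the rest of the argument is a direct translation between the two subword complexes.
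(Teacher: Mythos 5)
Your proof is correct, and it takes a genuinely different route from the paper's. The paper proves both directions by invoking \Cref{prop:bruhatcoversandrootfunction}, the structural characterization $\Eplus{sw}{\demazure{\Q'}} = \{\Root{K}{i} : i \in K \text{ not flippable}\}$: for the forward direction the observation $\alpha_s \in \Eplus{sw}{\demazure{\Q'}}$ (because $sw \prec s_{\alpha_s} sw = w \leq \demazure{\Q'}$) yields a facet $K$ with $\alpha_s \in \Roots{K}$, and for the converse the non-flippability of the position carrying $\alpha_s$ gives $w \leq \demazure{\Q'}$, hence a facet of $\subwordComplex(\Q',w)$. You instead argue entirely at the level of \Cref{lem:rootsandflips}: you pin down the $s$-equivariance relating the root function of $\subwordComplex(\Q',sw)$ to that of $\subwordComplex(\Q,w)$ on facets avoiding position $1$, observe that $\Root{I}{1} = \alpha_s$ always and that $sw \prec w$ makes position $1$ flippable, and then transport the flip partner across the two complexes to exhibit the desired facets explicitly. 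This is more hands-on and effectively re-derives the small piece of \Cref{prop:bruhatcoversandrootfunction} actually needed here, at the cost of some flip sign bookkeeping (all of which you handle correctly; the remark that ``the flip is increasing'' is superfluous, since the flip partner of position $1$ is automatically $>1$). Both arguments are sound; the paper's is shorter because it outsources the flippability-versus-cover-label dichotomy to a lemma already established, while yours is more self-contained and makes the combinatorial mechanism visible.
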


\begin{proof}
  Let $\Q' = \s_2\dots\s_m$, and observe that there exists a facet~$I$ with $1 \in I$ if and only if $\subwordComplex(\Q',w)$ is not empty, and there exists a facet~$J$ with $1 \notin J$ if and only if $sw \prec w$.

  Assuming first that these two properties hold, the existence of a facet~$K$ of $\subwordComplex(\Q',sw)$ with $\alpha_{s} \in \Roots{K}$ is ensured by \Cref{prop:bruhatcoversandrootfunction} as $\alpha_s \in \inv(w)$.

  Let now be $K \in \subwordComplex(\Q', s w)$ with $\alpha_{s} \in \Roots{K}$, and let $sw \prec w$.
  Again by \Cref{prop:bruhatcoversandrootfunction}, we obtain that the word $\Q'$ contains a reduced word for $w = ssw$.
  Since this is equivalent to $\subwordComplex(\Q',w)$ being not empty, it concludes the proof.
\end{proof}

\begin{lemma}
  \label{lem:coneandfunctional}
  Let $\Q = \s\ \s_2 \dots \s_m$ and $w \leq \demazure{\Q}$.
  The following are equivalent:
  \begin{itemize}
    \item For every linear functional~$f$ with $f( \alpha_{s} ) > 0$, the complex $\subwordComplex_{f \circ s}(\Q_{\{2,\dots,m\}}, s w)$ is empty.
    \item For every facet $K \in \subwordComplex(\Q_{\{2,\dots,m\}}, s w)$ we have $\alpha_{s} \in \cone\Roots{K}$.
  \end{itemize}
\end{lemma}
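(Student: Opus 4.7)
The plan is a hyperplane separation argument driven by the two identities $s(\alpha_s) = -\alpha_s$ and $s^2 = \mathrm{id}$, which translate conditions on $f$ into conditions on $f \circ s$. For brevity write $\Q' = \s_2 \dots \s_m$ and recall that a facet $K$ of $\subwordComplex(\Q', sw)$ lies in $\subwordComplex_{f \circ s}(\Q', sw)$ if and only if $(f \circ s)(\beta) \geq 0$ for every $\beta \in \Roots{K}$.

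For the direction \emph{second bullet} $\Rightarrow$ \emph{first bullet}, I would pick any linear functional $f$ with $f(\alpha_s) > 0$ and any facet $K$ of $\subwordComplex(\Q', sw)$. By hypothesis there exist $\lambda_\beta \geq 0$ with $\alpha_s = \sum_{\beta \in \Roots{K}} \lambda_\beta \beta$, and applying $f \circ s$ to this relation yields
\[
  -f(\alpha_s) \;=\; (f \circ s)(\alpha_s) \;=\; \sum_{\beta \in \Roots{K}} \lambda_\beta\,(f \circ s)(\beta).
\]
The left-hand side is strictly negative, so at least one summand $(f \circ s)(\beta)$ must be strictly negative, forcing $K \notin \subwordComplex_{f \circ s}(\Q', sw)$. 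Since $K$ was arbitrary, the complex $\subwordComplex_{f \circ s}(\Q', sw)$ is empty.

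For the converse I would argue by contrapositive: suppose some facet $K_0$ of $\subwordComplex(\Q', sw)$ satisfies $\alpha_s \notin \cone\Roots{K_0}$. Because $\cone\Roots{K_0}$ is finitely generated, hence a closed convex cone in $V$, the standard hyperplane separation theorem supplies a linear functional $g \colon V \to \RR$ with $g(\alpha_s) < 0$ and $g(\beta) \geq 0$ for every $\beta \in \Roots{K_0}$. Setting $f := g \circ s$, the involution identity $s^2 = \mathrm{id}$ gives $f(\alpha_s) = -g(\alpha_s) > 0$ and $(f \circ s)(\beta) = g(\beta) \geq 0$ for every $\beta \in \Roots{K_0}$, exhibiting $K_0 \in \subwordComplex_{f \circ s}(\Q', sw)$ and refuting the first bullet for this particular $f$.

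The only non-routine ingredient is the separation of the point $\alpha_s$ from the closed polyhedral cone $\cone\Roots{K_0}$, which is classical and poses no real difficulty; the rest of the argument is a direct unpacking of the definition of $\subwordComplex_{f\circ s}$ through the involution $s$, so I anticipate no serious obstacle.
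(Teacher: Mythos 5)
Your proof is correct and takes essentially the same approach as the paper: the paper compresses both directions into a chain of equivalences, first rewriting the first bullet via the substitution $f \mapsto f \circ s$ (using that $s$ is an involution with $s(\alpha_s) = -\alpha_s$) and then invoking ``duality of cones and functionals,'' which is exactly the hyperplane separation of $\alpha_s$ from the closed polyhedral cone $\cone\Roots{K}$ that you apply explicitly. You simply unpack the equivalence into its two directions, which makes the role of the separation theorem more transparent but does not change the substance of the argument.
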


\begin{proof}
  The first item says that for any functional $f : V \rightarrow \RR$ with $f( \alpha_{s} ) > 0$ and every facet $K \in \subwordComplex(\Q_{\{2,\dots,m\}}, sw)$, there is a $\beta \in \Roots{K}$ such that $(f \circ s) ( \beta ) < 0$.
  By replacing~$f$ with $(f \circ s)$ this says for any functional $f: V \rightarrow \RR$ with $f( \alpha_{s} ) < 0$ and every facet $K \in \subwordComplex(\Q{\{2,\dots,m\}}, s w)$, there is a $\beta \in \Roots{K}$ such that $f ( \beta ) < 0$.
  Now by duality of cones and functionals, this says that $\alpha_{s} \in \cone( \Roots{ K } )$ for every facet $K \in \subwordComplex(\Q_{\{2,\dots,m\}}, s w)$.
\end{proof}

\begin{corollary}
  \label{cor:noposfacet}
  Let $\Q = \s\ \s_2 \dots \s_m$, $w \leq \demazure{\Q}$ and $f : V \rightarrow \RR$ be a linear functional with $f( \alpha_s ) > 0$.
  Let furthermore $I,J \in \subwordComplex(\Q,w)$ be two facets with $1 \in I$ and   $1 \not\in J$.
  Then
  \begin{itemize}
    \item For every facet $K \in \subwordComplex(\Q_{\{2,\dots,m\}}, sw)$ we have $\alpha_s \in \cone( \Roots{K} )$, and
    \item the complex $\subwordComplex_{f \circ s}(\Q_{\{2,\dots,m\}}, sw)$ is empty.
  \end{itemize}
\end{corollary}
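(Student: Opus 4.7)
The plan is to assemble this corollary directly from the two lemmas that immediately precede it, using \Cref{prop:nonflipablerootincone} as the connecting bridge. The two bullets will be proved in order: the cone containment first, then the emptiness of $\subwordComplex_{f\circ s}(\Q_{\{2,\dots,m\}}, sw)$ as a formal consequence.

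\medskip

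First I would extract two facts from the existence of the facets $I$ and $J$. From $J$ (with $1 \notin J$), \Cref{lem:q1notflipable} gives that $sw \prec w$ and that $\subwordComplex(\Q_{\{2,\dots,m\}}, sw)$ is non-empty. From $I$ (with $1 \in I$), the set $I \setminus \{1\}$ is a facet of $\subwordComplex(\Q_{\{2,\dots,m\}}, w)$, so this complex is non-empty and consequently $w \leq \demazure{\Q_{\{2,\dots,m\}}}$ by~\eqref{eq:demazurestrongdef}. Combining these two observations yields
\[
  sw \prec s \cdot sw = s_{\alpha_s}\cdot sw = w \leq \demazure{\Q_{\{2,\dots,m\}}},
\]
which is exactly the statement $\alpha_s \in \Eplus{sw}{\demazure{\Q_{\{2,\dots,m\}}}}$ by definition of the upper Bruhat cone.

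\medskip

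Next I would apply \Cref{prop:nonflipablerootincone} to the non-empty subword complex $\subwordComplex(\Q_{\{2,\dots,m\}}, sw)$. This immediately upgrades the previous membership to
\[
  \alpha_s \in \Eplus{sw}{\demazure{\Q_{\{2,\dots,m\}}}} \subseteq \cone(\Roots{K})
\]
for \emph{every} facet $K$ of $\subwordComplex(\Q_{\{2,\dots,m\}}, sw)$, which is precisely the first bullet of the corollary.

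\medskip

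For the second bullet, I would invoke the equivalence in \Cref{lem:coneandfunctional}: the universal cone containment just established is equivalent to the statement that $\subwordComplex_{f \circ s}(\Q_{\{2,\dots,m\}}, sw)$ is empty for \emph{every} linear functional $f$ with $f(\alpha_s) > 0$, hence in particular for the given one. I do not expect a genuine obstacle here; all the real work lies upstream in \Cref{lem:q1notflipable}, \Cref{prop:nonflipablerootincone} and the cone/functional duality behind \Cref{lem:coneandfunctional}. The only point requiring mild care is verifying that the existence of the facet $I$ with $1 \in I$ really produces $w \leq \demazure{\Q_{\{2,\dots,m\}}}$ rather than merely $sw \leq \demazure{\Q_{\{2,\dots,m\}}}$, since this stronger inequality is what promotes $\alpha_s$ into the set $\Eplus{sw}{\demazure{\Q_{\{2,\dots,m\}}}}$ and thus drives the entire argument.
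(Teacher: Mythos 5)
Your proof is correct and follows essentially the same route as the paper: extract $sw \prec w$ from \Cref{lem:q1notflipable}, apply \Cref{prop:nonflipablerootincone}, then invoke \Cref{lem:coneandfunctional}. The one place where you are more explicit than the paper is in justifying why $\alpha_s$ actually belongs to $\Eplus{sw}{\demazure{\Q_{\{2,\dots,m\}}}}$ before \Cref{prop:nonflipablerootincone} can be used: you observe that the facet~$I$ with $1\in I$ shows $\subwordComplex(\Q_{\{2,\dots,m\}},w)$ is non-empty, hence $w\le\demazure{\Q_{\{2,\dots,m\}}}$, which together with $sw\prec w$ gives exactly the membership needed. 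The paper leaves this step implicit, so your care here is well placed and closes a small expository gap rather than diverging from the intended argument.
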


\begin{proof}
  First by \Cref{lem:q1notflipable} we have $s w \prec w$.
  Applying \Cref{prop:nonflipablerootincone} we get $\alpha_{s} \in \cone( \Roots{K} )$ for every facet $K \in \subwordComplex(\Q_{\{2,\dots,m\}}, sw)$ and by \Cref{lem:coneandfunctional} we then get $\subwordComplex_{f \circ s}(\Q_{\{2,\dots,m\}}, s w)$ is empty.
\end{proof}

We remark that in the situations of \Cref{lem:coneandfunctional} and \Cref{cor:noposfacet} we have $sw \prec w$. Thus the functional $f \circ s$ is not non-negative for the Bruhat interval $[sw, \demazure{\Q}]$.
Thus, the conclusion
\[
  \subwordComplex_{f \circ s}(\Q_{\{2,\dots,m\}}, sw) = \emptyset
\]
is in agreement with \Cref{prop:connectedcomp}.

\begin{proof}[Proof of \Cref{thm:uniquefantigreedyfacet}]
  Let $I_f$ be the facet of $\subwordComplex(\Q,w)$ produced by \Cref{algo:uniquefacet} and let $I \neq I_f$ be another facet.
  Let~$k$ be the smallest index for which~$I$ differs from $I_f$, \ie, the smallest index such that either $k \in I \setminus I_f$ or $k \in I_f \setminus I$.

  If the algorithm was violated at step~$k$ in condition~$(4)$, the facet~$I$ violates \Cref{prop:algowelldefined}\ref{it:algowelldefined-b}, and if the algorithm was violated in condition~$(5)$, the facet~$I$ violates \Cref{prop:algowelldefined}\ref{it:algowelldefined-c}.

  If the algorithm was violated in step~$k$ in either of the conditions~$(1), (2)$ and~$(6)$, the partially constructed set could not be extended to a facet of $\subwordComplex(\Q,w)$ which cannot happen since~$I$ is chosen to be such a facet. 

  For the remaining case, \ie, that the algorithm was violated at step~$k$ in condition~$(3)$, we assume without loss of generality that $k=1$.
  Hence, we have $f(\beta_1) = f(\alpha_{s}) > 0$, $s w \prec w$ and $w \leq \demazure{\Q_{\{2,\dots,m\}}}$.
  By the latter there is a facet $J \in \subwordComplex(\Q,w)$ with $1 \in J$, but as $sw \prec w$ we have $\alpha_s \in \inv(w)$, hence $1 \in J$ is flippable.
  Let~$I'$ be the adjacent facet, such that $I' \setminus \{i\} = J \setminus \{1\}$.
  Then by \Cref{cor:noposfacet} we have $\subwordComplex_{f \circ s}(\Q_{\{2,\dots,m\}}, sw) = \emptyset$.
  Now facets $ K' \in \subwordComplex(\Q_{\{2,\dots,m\}}, sw)$ are in one-to-one correspondence with facets $K' = K \in \subwordComplex(\Q,w)$ with $1 \not\in K$.
  Furthermore we have $\Roots{K} = s \Roots{K'}$ for every such facet, hence if we violated \Cref{algo:uniquefacet} at step~$k$ in condition~$(3)$ by not adding~$1$ to $I_{k+1}$, we obtain $I \not\in \subwordComplex_f(\Q,w)$ and conclude the statement.
\end{proof}

\begin{proof}[Proof of \Cref{prop:connectedcomp}]
  \label{pf:pf_connectedcomp}
  Let $I,J \in \subwordComplex(\Q,w)$ be two adjacent facets with $I \setminus \{i\} = J \setminus \{j\}$ and $f( \Root{I}{i} ) = 0 = f( \Root{J}{j} )$, \ie, the flip from~$I$ to $J$ is $f$-preserving.
  By \Cref{lem:rootsandflips} we have for all $k \in [m]$ that $\Root{J}{k} - \Root{I}{k} \in \RR \cdot \Root{I}{i}$ and thus $f( \Root{I}{k} ) = f( \Root{J}{k} )$.
  Therefore $\subwordComplex_f(\Q,w)$ is closed under $f$-preserving flips, and also the set of $f$-positive indices is preserved, $\RootsPos{I}{f} = \RootsPos{J}{f}$.
  By performing $f$-preserving flips we can reach a facet $K \in \subwordComplex_f(\Q,w)$ with $\RootsPos{K}{f} = \RootsPos{I}{f}$ and $(K \setminus \RootsPos{K}{f}) \setminus \Eplus{w}{\demazure{\Q}} \subseteq \Phiminus$.
  By \Cref{thm:uniquefantigreedyfacet} this facet is unique, hence $\subwordComplex_f(\Q,w)$ is connected via $f$-preserving flips.

  For the second part, recall that we proved in the first part that $\RootsPos{I}{f}$ is independent of the facet $I \in \subwordComplex_{f}(\Q,w)$.
  By inserting the positions $\RootsPos{I}{f}$ into $\Q_{\{1,\dots,m\} \setminus \RootsPos{I}{f}}$, we obtain a natural isomorphism
  \[
    \subwordComplex(\Q_{\{1,\dots,m\} \setminus \RootsPos{I}{f}}, w) \cong \bigset{I \in \subwordComplex(\Q,w)}{\RootsPos{I}{f} \subseteq I}.
  \]
  In particular, $\subwordComplex_{f}(\Q,w)$ is a subset of the right hand side.
  Now let $J \in \subwordComplex(\Q,w)$ be any facet with $\RootsPos{I}{f} \subseteq J$.
  By the above isomorphism, the facets~$J$ and~$I$ are connected by a sequence of flips not containing positions in $\RootsPos{I}{f}$.
  Thus, the sequence of flips from~$I$ to~$J$ not containing positions in $\RootsPos{I}{f}$ is $f$-preserving and we obtain $J \in \subwordComplex_{f}(\Q,w)$.
\end{proof}

\section{Brick polyhedra for subword complexes}
\label{sec:brickpolytopes}

Based on \Cref{prop:connectedcomp}, we extend in this section brick polytopes that were developed in~\cite{Pilaud-Stump-2015} for root-independent spherical subword complexes towards brick polyhedra for general subword complexes.

\medskip

We first recall some elementary notions for polyhedra in the vector space~$V$.
A subset~$P \subseteq V$ is called \Dfn{polyhedron} if there are finitely many linear functionals $f_1,\dots,f_k : V \rightarrow \RR$ and scalars $b_1,\dots,b_k \in \RR$ such that
\[
  P = \bigset{v \in V}{f_i(v) + b_i \geq 0 \text{ for all } 1 \leq i \leq k}.
\]
A bounded polyhedron is furthermore called \Dfn{polytopal}.
A \Dfn{face} of a polyhedron~$P$ is a subset $F \subseteq P$ such that there exists a linear functional $f : V \rightarrow \RR$ and a scalar $b \in \RR$ with $f + b$ is non-negative on~$P$ and such that $F = \set{v \in P}{f(v) + b = 0}$.
The linear functional~$f$ is called \Dfn{defining functional} for the face~$F$ and the pair $(f,b)$ is called \Dfn{defining hyperplane}.
The \Dfn{local cone} of a polyhedron~$P$ at a point~$q \in P$ is the cone over~$P$ seen from the point~$q$,
\[
  \pcone{P}{q} = \cone\set{p-q}{p \in P}.
\]
Moreover, the \Dfn{(inner) normal cone}~$\normalcone{F}$ of a face $F \subseteq P$ is the cone of defining functionals of~$F$ and the \Dfn{(inner) normal fan}~$\normalfan{P}$ is the collection of normal cones of faces of~$P$,
\[
  \normalfan{P} = \bigset{\normalcone{F}}{F \text{ a face of } P}.
\]
Finally, the \Dfn{Minskowski sum} $P+Q$ of two polyhedra~$P$ and~$Q$ is given by pointwise vector addition.
Observe that every $d$-dimensional face of $P+Q$ is given by the Minskowski sum of an $i$-dimensional face of~$P$ and a $j$-dimensional face of~$Q$ with $i+j = d$.

\medskip

We now define the brick polyhedron of a non-empty subword complex $\subwordComplex(\Q,w)$ with $\Q =\s_1 \dots \s_m$.
Following \cite[Definition 4.1]{Pilaud-Stump-2015}, the \Dfn{weight function} $\Weight{I}{\cdot} : [m] \rightarrow W(\nabla) \subseteq V$ of a facet~$I$ of $\subwordComplex(\Q,w)$ is defined by
$
  \Weight{I}{k} = \wordprod{\Q}{\{1,\dots,k-1\} \setminus I}(\omega_{s_k}).
$ 
and the \Dfn{brick vector}\footnote{In comparision with the original definition of brick polytopes in~\cite{Pilaud-Stump-2015}, we introduce a minus sign here. This is done in order to simplify notations in later statements.} of~$I$ is then
\[
  \brickVector{I} = - \sum\limits_{k=1}^{m} \Weight{I}{k}.
\]

The following definition generalizes the definition of brick polytopes for spherical subword complexes.

\begin{definition}
\label{def:brickpolytope}
  The \Dfn{brick polyhedron} of a non-empty subword complex $\subwordComplex(\Q,w)$ is the Minskowski sum of the convex hull of all brick vectors and the Bruhat cone $\Cplus{w}{\demazure{\Q}}$,
  \[
    \brickPolytope(\Q,w) = \conv\bigset{\brickVector{I}}{I \text{ facet of } \subwordComplex(\Q,w)} + \Cplus{w}{\demazure{\Q}}. 
  \]
\end{definition}

This definition immediately implies that every brick vector is contained in the brick polyhedron and that every vertex of the brick polyhedron is a brick vector.
We moreover record that brick polyhedra indeed generalize the previously known notion of brick polytopes up to a switch in the sign of brick vectors.

\begin{proposition}
  The brick polyhedron $\brickPolytope(\Q,w)$ of a non-empty subword complex $\subwordComplex(\Q,w)$ is polytopal if and only if $\subwordComplex(\Q,w)$ is spherical.
  In this case, the brick polyhedron is the convex hull of all brick vectors.
\end{proposition}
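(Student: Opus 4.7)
The plan is to split the argument along the characterization recalled in \Cref{sec:subwordcomplexes}: a non-empty subword complex $\subwordComplex(\Q,w)$ is spherical if and only if $w = \demazure{\Q}$. Given \Cref{def:brickpolytope}, the claim then reduces to showing that the Minkowski summand $\Cplus{w}{\demazure{\Q}}$ is the trivial cone $\{0\}$ precisely when $w = \demazure{\Q}$, and that adding a non-trivial pointed cone to a bounded set produces an unbounded polyhedron.

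First I would handle the spherical direction, assuming $w = \demazure{\Q}$. In this case the Bruhat interval $[w,\demazure{\Q}]$ consists of the single element~$w$, so no atoms exist and $\Eplus{w}{\demazure{\Q}} = \emptyset$. Consequently $\Cplus{w}{\demazure{\Q}} = \cone(\emptyset) = \{0\}$, and \Cref{def:brickpolytope} reduces to
\[
  \brickPolytope(\Q,w) = \conv\bigset{\brickVector{I}}{I \text{ facet of } \subwordComplex(\Q,w)}.
\]
Since there are only finitely many facets, this is the convex hull of a finite set of points in~$V$ and hence a polytope, establishing both statements of the proposition in the spherical direction at once.

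For the non-spherical direction, assume $w < \demazure{\Q}$. Then the Bruhat interval $[w,\demazure{\Q}]$ has positive height, so at least one atom $w \prec s_\beta w \leq \demazure{\Q}$ exists, yielding a positive root $\beta \in \Eplus{w}{\demazure{\Q}}$. Hence $\RRpos \beta \subseteq \Cplus{w}{\demazure{\Q}}$, so the Minkowski summand is unbounded along the direction~$\beta$. Since the convex hull of brick vectors is bounded, its Minkowski sum with a cone containing the nonzero ray $\RRpos \beta$ is unbounded, and therefore $\brickPolytope(\Q,w)$ cannot be polytopal.

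No substantial obstacle is anticipated: the argument is essentially an unpacking of \Cref{def:brickpolytope} together with the standard fact that $w < \demazure{\Q}$ forces the upper Bruhat cone $\Cplus{w}{\demazure{\Q}}$ to contain at least one positive ray. The only point requiring a brief sanity check is that the object defined in \Cref{def:brickpolytope} is indeed a polyhedron in the sense recalled at the beginning of \Cref{sec:brickpolytopes}, but this is immediate from the fact that a Minkowski sum of a polytope and a finitely generated cone is polyhedral.
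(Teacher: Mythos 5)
Your proposal is correct and follows essentially the same route as the paper: the paper's proof is a one-liner invoking the equivalences "spherical iff $w = \demazure{\Q}$ iff $\Cplus{w}{\demazure{\Q}} = \{0\}$", and you have simply unpacked these equivalences (trivial Bruhat interval has no atoms; a proper interval has at least one atom, giving a nonzero ray that unbounds the Minkowski sum). There is no gap and no genuinely different idea — just a fuller spelling-out of the paper's compressed argument.
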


\begin{proof}
  This follows from the observation made in \Cref{sec:subwordcomplexes} that the subword complex $\subwordComplex(\Q,w)$ is spherical if and only if $\demazure{\Q} = w$ if and only if $\Cplus{w}{\demazure{\Q}} = \{0\}$.
\end{proof}

\begin{example}[Type $A_2$]
  \label{ex:brickpoly}
  Let $\Q = 11212$ and $w = 12$.
  We then have the facets of $\subwordComplex(\Q,w)$ and the brick polyhedron $\brickPolytope(\Q,w)$ given by
  \[
  \begin{tikzpicture}[scale=1.5]
  \draw[fill=grey!20, grey!20] (2.598,3) -- (-1.299,0.75) -- (-1.299,-0.75) -- (1.299,-2.25) -- (5.196,0) -- (2.598,3);
  
  \draw[green!10] (-2.165,-1.25) -- (4.33,2.5);
  \draw[green!10] (-2.165,1.25) -- (3.2475,-1.875);
  \draw[green!10] (0,-2.5) -- (0,2.5);
  
  \node (a1) at (0,-2.5)     {$\brickVector{\{1,2,3\}} = \tfrac{1}{3}(\del{\mi{87}})$};
  \node (a2) at (-1.3,-1.75) {$\brickVector{\{1,3,4\}} = \tfrac{1}{3}(\del{\mi{77}})$};
  \node (a3) at (1.5,0)      {$\brickVector{\{1,4,5\}} = \tfrac{1}{3}(\del{\mi{66}})$};
  \node (a4) at (-2.6,-0.75) {$\brickVector{\{2,3,4\}} = \tfrac{1}{3}(\del{\mi{67}})$};
  \node (a5) at (-2.6,0.75)  {$\brickVector{\{2,4,5\}} = \tfrac{1}{3}(\del{\mi{56}})$};
  
  \draw (2.598,3) -- (-1.299,0.75) -- (-1.299,-0.75) -- (1.299,-2.25) -- (5.196,0);
  \draw[-{>[scale=1.2]}] (-1.299,0.75) -- (2.598,3);
  \draw[-{>[scale=1.2]}] (1.299,-2.25) -- (5.196,0);
  
  \draw[darkblue, line width = 2pt, ->] (1.299,-2.25) -- (1.732,-2);
  \draw[darkblue, line width = 2pt, ->] (1.299,-2.25) -- (0.866,-2);
  
  \draw[darkblue, line width = 2pt, ->] (0,-1.5) -- (-0.433,-1.25);
  \draw[darkblue, line width = 2pt, ->] (0,-1.5) -- (0.433,-1.75);
  \draw[darkblue, line width = 2pt, ->] (0,-1.5) -- (0,-1);
  
  \draw[darkblue, line width = 2pt, ->] (0,0) -- (0,-0.5);
  \draw[darkblue, line width = 2pt, ->] (0,0) -- (0.433,0.25);
  \draw[darkblue, line width = 2pt, ->] (0,0) -- (-0.433,0.25);
  
  \draw[darkblue, line width = 2pt, ->] (-1.299,-0.75) -- (-1.299,-0.25);
  \draw[darkblue, line width = 2pt, ->] (-1.299,-0.75) -- (-0.866,-1);
  
  \draw[darkblue, line width = 2pt, ->] (-1.299,0.75) -- (-1.299,0.25);
  \draw[darkblue, line width = 2pt, ->] (-1.299,0.75) -- (-0.866,1);
  \draw[darkblue, line width = 2pt, ->] (-1.299,0.75) -- (-0.866,0.5);
  
  \node (b1) at (1.299,-2.25) {$\bullet$};
  \node (b2) at (0,-1.5) {$\bullet$};
  \node (b3) at (0,0) {$\bullet$};
  \node (b4) at (-1.299,-0.75) {$\bullet$};
  \node (b5) at (-1.299,0.75) {$\bullet$};
  
  \end{tikzpicture}
  \]
  with arrows pointing towards the respective root configurations
  \begin{gather*}
  \Roots{\{1,2,3\}} = \{\del{10},~ \del{10},~ \del{01}\}, \quad 
  \Roots{\{1,3,4\}} = \{\del{10},~ \del{11},~ \del{\mi{10}}\}, \\
  \Roots{\{1,4,5\}} = \{\del{10},~ \del{01},~ \del{\mi{11}}\}, \quad 
  \Roots{\{2,3,4\}} = \{\del{\mi{10}},~ \del{11},~ \del{\mi{10}}\}, \\
  \Roots{\{2,4,5\}} = \{\del{\mi{10}},~ \del{01},~ \del{\mi{11}}\}.
  \end{gather*}
\end{example}

\subsection{Local cones of brick polyhedra at brick vectors}

The definition of brick polyhedra is justified by the following generalization of \cite[Proposition~4.7]{Pilaud-Stump-2015}.

\begin{theorem}
  \label{cor:vertexpointedcone}
  The local cone of the brick polyhedron $\brickPolytope(\Q,w)$ at the brick vector $\brickVector{I}$ coincides with the cone generated by the root configuration of the facet~$I$ of $\subwordComplex(\Q,w)$.
  In symbols,
  \[
  \pcone{\brickPolytope(\Q,w)}{\brickVector{I}} = \cone \Roots{I}.
  \]
  In particular, the brick vector $\brickVector{I}$ is a vertex of $\brickPolytope(\Q,w)$ if and only if $\Roots{I}$ is pointed.
\end{theorem}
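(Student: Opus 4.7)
The plan is to prove both inclusions separately, relying on the flip formula
\[
  \brickVector{J} - \brickVector{I} \in \RRpos \cdot \Root{I}{i}
\]
valid for any pair of adjacent facets $I, J$ of $\subwordComplex(\Q,w)$ with $I \setminus \{i\} = J \setminus \{j\}$. This is the analog of the flip formula from~\cite{Pilaud-Stump-2015} adapted to the minus-sign convention of \Cref{def:brickpolytope}; it follows by summing the weight function differences $\Weight{J}{k} - \Weight{I}{k}$ over the flip interval $\min(i,j) < k \leq \max(i,j)$, each being a scalar multiple of $\Root{I}{i}$ via the reflection identity $(1-s_\beta)(\omega) = \langle \omega, \beta^\vee \rangle \beta$. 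The sign orientation can be checked against the ongoing \Cref{ex:brickpoly}.

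For the inclusion $\cone \Roots{I} \subseteq \pcone{\brickPolytope(\Q,w)}{\brickVector{I}}$, I would check each generator $\Root{I}{k}$ for $k \in I$ individually. If $k$ is flippable with flip partner $J$, the flip formula places $\Root{I}{k}$ on the ray spanned by $\brickVector{J} - \brickVector{I}$, which sits inside the local cone by definition. If $k$ is non-flippable, then \Cref{prop:bruhatcoversandrootfunction} gives $\Root{I}{k} \in \Eplus{w}{\demazure{\Q}} \subseteq \Cplus{w}{\demazure{\Q}}$, and since $\brickVector{I} + \Cplus{w}{\demazure{\Q}} \subseteq \brickPolytope(\Q,w)$ by \Cref{def:brickpolytope}, the ray $\RRpos \cdot \Root{I}{k}$ again lies in the local cone.

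For the reverse inclusion $\pcone{\brickPolytope(\Q,w)}{\brickVector{I}} \subseteq \cone \Roots{I}$, I would dualize: it suffices to show that every linear functional $f : V \rightarrow \RR$ non-negative on $\cone \Roots{I}$ is also non-negative on $\pcone{\brickPolytope(\Q,w)}{\brickVector{I}}$. By \Cref{prop:nonflipablerootincone} one has $\Eplus{w}{\demazure{\Q}} \subseteq \cone \Roots{I}$, so $f \geq 0$ on $\Cplus{w}{\demazure{\Q}}$; hence $f$ is bounded below on the brick polyhedron and attains its minimum at some brick vector $\brickVector{J_0}$. Local minimality at $J_0$ combined with the flip formula forces $f(\Root{J_0}{k}) \geq 0$ for every flippable $k \in J_0$, while for non-flippable $k \in J_0$, \Cref{prop:nonflipablerootincone} again supplies $\Root{J_0}{k} \in \cone \Roots{I}$. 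Therefore $J_0 \in \subwordComplex_f(\Q,w)$, as is $I$ by hypothesis, so that \Cref{prop:connectedcomp} produces a path of $f$-preserving flips from $I$ to $J_0$. The flip formula ensures $f(\brickVector{\cdot})$ is constant along any $f$-preserving flip, so $f(\brickVector{I}) = f(\brickVector{J_0})$ is the global minimum of $f$ on $\brickPolytope(\Q,w)$, yielding $f \geq 0$ on $\pcone{\brickPolytope(\Q,w)}{\brickVector{I}}$. The closing assertion then follows because a point of a polyhedron is a vertex if and only if its local cone is pointed.

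The principal obstacle is getting the sign orientation correct in the flip formula, which must be consistent with the minus sign in \Cref{def:brickpolytope} for the duality step to produce a facet $J_0$ inside $\subwordComplex_f(\Q,w)$ rather than in the opposite half-space. Once the flip formula is established with the correct sign, the proof is a clean assembly of \Cref{prop:bruhatcoversandrootfunction}, \Cref{prop:nonflipablerootincone}, and \Cref{prop:connectedcomp}.
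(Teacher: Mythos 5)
Your proof is correct and follows essentially the same route as the paper: the first inclusion is handled by exactly the same flippable/non-flippable dichotomy (using \Cref{lem:differenceofbrickvectors} and \Cref{prop:bruhatcoversandrootfunction}), and the second inclusion is the same duality argument, with the only difference that you inline the content of \Cref{prop:facesofbrickpolyarebrickpoly} (existence of a minimizing brick vector, membership of the minimizer in $\subwordComplex_f(\Q,w)$, and constancy of $f$ along $f$-preserving flips via \Cref{prop:connectedcomp}) rather than citing that proposition directly, and you invoke \Cref{prop:nonflipablerootincone} where the paper invokes the stronger \Cref{thm:cone_equality}. These are minor repackagings of the same underlying argument.
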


Based on \Cref{thm:cone_equality}, we obtain the following equivalent description of brick polyhedra.

\begin{corollary}
  \label{cor:brickpolyhdescription}
  We have
  \[
  \brickPolytope(\Q,w) =  \bigcap_{I \text{ facet of } \subwordComplex(\Q,w)} \big( \brickVector{I} +  \cone \Roots{I} \big).
  \]
\end{corollary}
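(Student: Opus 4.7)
The plan is to prove both inclusions separately; the forward one is an immediate consequence of \Cref{cor:vertexpointedcone}, while the reverse one is a standard separation argument that relies both on \Cref{cor:vertexpointedcone} and on the pointedness of the Bruhat cone (where \Cref{thm:cone_equality} enters implicitly, ensuring the recession cones match on both sides).

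For the inclusion $\brickPolytope(\Q,w) \subseteq \bigcap_I \big(\brickVector{I} + \cone\Roots{I}\big)$, I would invoke the general fact that any polyhedron $P$ satisfies $P \subseteq q + \pcone{P}{q}$ for every $q \in P$. Since each brick vector $\brickVector{I}$ lies in $\brickPolytope(\Q,w)$ by definition, and since \Cref{cor:vertexpointedcone} identifies $\pcone{\brickPolytope(\Q,w)}{\brickVector{I}} = \cone\Roots{I}$, this inclusion follows for each facet $I$ and hence for the intersection.

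For the reverse inclusion, I would argue by contradiction. Suppose $v \in \bigcap_I \big(\brickVector{I} + \cone\Roots{I}\big)$ but $v \notin \brickPolytope(\Q,w)$. By hyperplane separation there is an affine functional $f + b$ that is non-negative on $\brickPolytope(\Q,w)$ while $f(v) + b < 0$. Since $f + b$ is bounded below on $\brickPolytope(\Q,w)$, the linear part $f$ must be non-negative on the recession cone, which equals $\Cplus{w}{\demazure{\Q}}$ by definition of the brick polyhedron. The cone $\Cplus{w}{\demazure{\Q}}$ is generated by positive roots and hence pointed, so $\brickPolytope(\Q,w)$ has vertices, and a standard Minkowski-sum argument (any $v = \brickVector{I} + c$ with $c \neq 0$ is a proper midpoint of $\brickVector{I}$ and $\brickVector{I} + 2c$) shows these vertices are among the brick vectors. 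By the fundamental theorem of linear programming, $f + b$ attains its minimum on $\brickPolytope(\Q,w)$ at some vertex $\brickVector{I_0}$, forcing $f$ to be non-negative on the local cone $\pcone{\brickPolytope(\Q,w)}{\brickVector{I_0}} = \cone\Roots{I_0}$. Writing $v = \brickVector{I_0} + r$ with $r \in \cone\Roots{I_0}$ (which exists by assumption) then gives
\[
  f(v) + b \;=\; \big(f(\brickVector{I_0}) + b\big) + f(r) \;\geq\; 0,
\]
contradicting $f(v) + b < 0$.

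The main obstacle is the unbounded setting: one needs that $\brickPolytope(\Q,w)$ actually has vertices and that they are brick vectors, so that \Cref{cor:vertexpointedcone} applies to the minimizing vertex produced by the LP argument. Both points reduce to the pointedness of $\Cplus{w}{\demazure{\Q}}$, which is immediate since positive roots lie in an open halfspace. \Cref{thm:cone_equality} enters as a consistency check at infinity, guaranteeing that the recession cone of the right-hand side equals $\bigcap_I \cone\Roots{I} = \Cplus{w}{\demazure{\Q}}$, which matches that of the left-hand side.
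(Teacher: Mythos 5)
The paper states this corollary without an explicit proof, treating it as an immediate consequence of \Cref{cor:vertexpointedcone} together with \Cref{thm:cone_equality}. Your argument is a correct and careful implementation of exactly the reasoning the paper leaves implicit: the forward inclusion follows from the general fact $P \subseteq q + \pcone{P}{q}$ combined with the local-cone identity of \Cref{cor:vertexpointedcone}, and the reverse inclusion follows by a separating-hyperplane argument combined with the linear-programming observation that a functional non-negative on the recession cone $\Cplus{w}{\demazure{\Q}}$ attains its minimum over the brick polyhedron at a vertex, where the local cone is again identified via \Cref{cor:vertexpointedcone}. One minor observation: as you note, \Cref{thm:cone_equality} does not appear explicitly in your chain of implications — it enters only indirectly, namely inside the paper's proof of \Cref{cor:vertexpointedcone}, so its role is exactly the ``consistency at infinity'' you describe. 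Your handling of the unbounded setting (pointedness of $\Cplus{w}{\demazure{\Q}}$ from positive roots lying in an open halfspace, hence the polyhedron has vertices, and the Minkowski-sum midpoint argument showing vertices are brick vectors) is precisely what is needed and is stated correctly.
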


A linear functional $f : V \rightarrow \RR$ is a defining functional for the non-empty brick polyhedron $\brickPolytope(\Q,w)$ if and only if it is non-negative on $[w,\demazure{\Q}]$.
For such a defining functional~$f$ with corresponding defining hyperplane $(f,b)$, denote by $B_f = \set{ v \in P}{f(v) + b = 0}$ the corresponding face of $\brickPolytope(\Q,w)$.
Recall also the facet~$I_f$ produced by \Cref{algo:uniquefacet} in \Cref{sec:algorithm} and from \Cref{sec:uniqueantigreedyfacets} that $\subwordComplex_f(\Q,w)$ is the set of facets~$I$ of $\subwordComplex(\Q,w)$ with $f(\Root{I}{i}) \geq 0$ for all $i \in I$.

\medskip

The following statement then generalizes \cite[Lemma~4.6]{Pilaud-Stump-2015}.

\begin{proposition}
\label{prop:facesofbrickpolyarebrickpoly}
  Let $f : V \rightarrow \RR$ be a linear functional which is non-negative on $[w,\demazure{\Q}]$.
  For a facet $I \in \subwordComplex(\Q,w)$, we have
  \[
    \brickVector{I} \in B_f \Longleftrightarrow I \in \subwordComplex_{f}(\Q,w).
  \]
\end{proposition}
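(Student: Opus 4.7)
The plan is to prove the two implications separately, using \Cref{cor:vertexpointedcone} for one direction and \Cref{prop:connectedcomp} together with a geometric minimization argument for the other.

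For the forward implication, suppose $\brickVector{I} \in B_f$, so that $f + b$ attains its minimum value $0$ at $\brickVector{I}$. By \Cref{cor:vertexpointedcone}, the local cone of $\brickPolytope(\Q,w)$ at $\brickVector{I}$ equals $\cone\Roots{I}$. In particular, for every $i \in I$ and all sufficiently small $t > 0$, the point $\brickVector{I} + t\, \Root{I}{i}$ lies in $\brickPolytope(\Q,w)$. Non-negativity of $f + b$ at this point forces $f(\Root{I}{i}) \geq 0$ for every $i \in I$, so $I \in \subwordComplex_f(\Q,w)$.

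For the backward implication, I would first show that $f \circ \brickVector{\cdot}$ is constant on $\subwordComplex_f(\Q,w)$. By \Cref{prop:connectedcomp}, this set is connected under $f$-preserving flips, so it is enough to check that such a flip $I \setminus \{i\} = J \setminus \{j\}$ preserves the $f$-value of the brick vector. Analogously to the root function transformation in \Cref{lem:rootsandflips}, the weight function satisfies $\Weight{J}{k} = s_{\Root{I}{i}}\bigl(\Weight{I}{k}\bigr)$ for $k$ in the flip range $\min(i,j) < k \leq \max(i,j)$ and $\Weight{J}{k} = \Weight{I}{k}$ otherwise. Summing the identity $\brickVector{J} = -\sum_k \Weight{J}{k}$ over $k$ then yields $\brickVector{J} - \brickVector{I} \in \RR \cdot \Root{I}{i}$, and $f(\Root{I}{i}) = 0$ gives $f(\brickVector{I}) = f(\brickVector{J})$ as desired.

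It remains to identify this common value $c := f(\brickVector{I})$ for $I \in \subwordComplex_f(\Q,w)$ with $-b$. Since $\brickPolytope(\Q,w) = \conv\{\brickVector{K}\} + \Cplus{w}{\demazure{\Q}}$ and $f$ is non-negative on the Bruhat cone, one has $\min_{v \in \brickPolytope(\Q,w)} f(v) = \min_K f(\brickVector{K}) = -b$, attained at some brick vertex $\brickVector{K_0}$. The forward implication applied at $\brickVector{K_0}$ places $K_0$ in $\subwordComplex_f(\Q,w)$, so $c = f(\brickVector{K_0}) = -b$, and every $\brickVector{I}$ with $I \in \subwordComplex_f(\Q,w)$ lies in $B_f$. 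The main technical point is the weight-function transformation rule, in particular the equality $\Weight{J}{k} = \Weight{I}{k}$ outside the flip range; this follows from the identity $v' \cdot s_j = s_i \cdot v'$ forced by $\Root{I}{i} = \Root{I}{j}$ (\Cref{lem:rootsandflips}), applied inside the factorization of $\wordprod{\Q}{\{1,\dots,k-1\} \setminus I}$ that separates the positions $i$ and $j$. Granting this, the proof is a direct assembly of the tools already developed.
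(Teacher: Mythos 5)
Your proof is essentially correct and follows the same two-pronged strategy as the paper: bound $f$ on the root configuration using flip directions for the forward implication, then use connectedness under $f$-preserving flips together with the linearity of $f$ on brick-vector differences for the backward implication. The backward direction matches the paper's argument (you make explicit the existence of a brick vector in $B_f$ via minimization over the Minkowski-sum description, which the paper leaves implicit); and your re-derivation of the weight transformation is unnecessary since it is precisely \Cref{lem:weightssandflips} and \Cref{lem:differenceofbrickvectors}, which you can cite directly.

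There is one citation issue to flag. In the forward direction you invoke \Cref{cor:vertexpointedcone} to identify the local cone with $\cone\Roots{I}$, but in the paper that theorem is \emph{proved using} \Cref{prop:facesofbrickpolyarebrickpoly} (the hard inclusion $\pcone{\brickPolytope(\Q,w)}{\brickVector{I}} \subseteq \cone\Roots{I}$ is established by constructing a linear functional and appealing to this very proposition). Citing the full equality therefore introduces a circularity. You in fact only need the easy inclusion $\cone\Roots{I} \subseteq \pcone{\brickPolytope(\Q,w)}{\brickVector{I}}$, which follows independently: for flippable $i$, the vector $\brickVector{J}-\brickVector{I}$ is a positive multiple of $\Root{I}{i}$ by \Cref{lem:differenceofbrickvectors}; for non-flippable $i$, $\Root{I}{i}\in \Eplus{w}{\demazure{\Q}}\subseteq \Cplus{w}{\demazure{\Q}}$ lies in the recession cone. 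Replacing the citation of \Cref{cor:vertexpointedcone} by this direct case split (which is exactly the paper's forward argument) removes the circularity and leaves an otherwise sound proof.
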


For the proof we first recall the following two lemmas from \cite[Section~4]{Pilaud-Stump-2015} that were stated for root-independent spherical subword complexes.
These remain valid in the present context with the same proofs as given.

\begin{lemma}{\cite[Lemma~4.4]{Pilaud-Stump-2015}}
  \label{lem:weightssandflips}
  Let $\Q = \s_1 \dots \s_m$ and let~$I$ be a facet of the non-empty subword complex $\subwordComplex(\Q,w)$.
  \begin{enumerate}
    \item If $\s_k = \s_{k+1}$ we have  $\Weight{I}{k+1} = 
    \begin{cases}
      \Weight{I}{k}               & \text{ if } k \in I, \\
      \Weight{I}{k} - \Root{I}{k} & \text{ if } k \notin I.
    \end{cases}$
    \item\label{eq:weightsbyflips2} If~$J$ is an adjacent facet with $I \setminus \{i\} = J \setminus \{j\}$, then $\Weight{J}{\cdot}$ is obtained from $\Weight{I}{\cdot}$ by:
    \[
      \Weight{J}{k} = \begin{cases}
                        s_{\Root{I}{i}}( \Weight{I}{k} ) & \text{ if } \min(i,j) < k \leq \max(i,j), \\
                        \Weight{I}{k} & \text{ otherwise}.
                      \end{cases}
    \]
    \item\label{eq:weightsbyflips3} For $j \notin I$, we have $\langle \Root{I}{j}, \Weight{I}{k} \rangle$ is non-negative if $j \geq k$, and non-positive if $j < k$.  
  \end{enumerate}
\end{lemma}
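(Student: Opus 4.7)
The plan is to prove all three parts by direct computation from the defining formulas $\Weight{I}{k} = \wordprod{\Q}{\{1,\dots,k-1\}\setminus I}(\omega_{s_k})$ and $\Root{I}{k} = \wordprod{\Q}{\{1,\dots,k-1\}\setminus I}(\alpha_{s_k})$, using only the elementary action rule $s(\omega_t) = \omega_t - \delta_{s=t}\alpha_s$ together with the standard fact that contiguous subwords of reduced words are reduced.

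For part~(1) I would set $P_k = \wordprod{\Q}{\{1,\dots,k-1\}\setminus I}$ and observe that $k \in I$ forces $P_{k+1} = P_k$ while $\s_k = \s_{k+1}$ forces $\omega_{s_{k+1}} = \omega_{s_k}$, so $\Weight{I}{k+1} = \Weight{I}{k}$; and $k \notin I$ forces $P_{k+1} = P_k \cdot s_k$, so applying the action rule yields $\Weight{I}{k+1} = P_k(\omega_{s_k} - \alpha_{s_k}) = \Weight{I}{k} - \Root{I}{k}$.

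For part~(2), assuming without loss of generality $i < j$, the idea is to factor the complement of $I$ in $\{1,\dots,k-1\}$ around the positions $i$ and $j$ and to compare with the corresponding factorization for $J$. Setting $u = \wordprod{\Q}{\{1,\dots,i-1\}\setminus I}$ and $v = \wordprod{\Q}{\{i+1,\dots,j-1\}\setminus I}$ (which also equals the analogous product for $J$, since $I$ and $J$ differ only at $i$ and $j$), the three regimes $k \leq i$, $i < k \leq j$, and $k > j$ can be treated in turn. The first is immediate. The second gives $\Weight{J}{k} = u s_i \tilde{v}(\omega_{s_k})$ while $\Weight{I}{k} = u \tilde{v}(\omega_{s_k})$ with $\tilde{v} = \wordprod{\Q}{\{i+1,\dots,k-1\}\setminus I}$, and inserting $u^{-1}u$ rewrites this as $s_{u(\alpha_{s_i})}(\Weight{I}{k}) = s_{\Root{I}{i}}(\Weight{I}{k})$. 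The third regime is the main obstacle: here one must extract from \Cref{lem:rootsandflips}\ref{eq:rootsbyflips} together with $\Root{I}{i} = u(\alpha_{s_i})$ and $\Root{I}{j} = uv(\alpha_{s_j})$ the identity $v(\alpha_{s_j}) = \pm \alpha_{s_i}$, equivalently $s_i v = v s_j$, which then collapses the two product expressions for $\Weight{I}{k}$ and $\Weight{J}{k}$ to a common value.

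For part~(3) I would parametrize the complement of $I$ as $t_1 < \cdots < t_\ell$, set $r_p = s_{t_p}$, $u_p = r_1 \cdots r_p$, and $L = |\{p : t_p < k\}|$, so that $\Weight{I}{k} = u_L(\omega_{s_k})$ and, for $j = t_l$, $\Root{I}{j} = u_{l-1}(\alpha_{r_l})$. In the case $j \geq k$ one has $L \leq l-1$, and cancellation reduces the pairing to $\dotprod{r_{L+1}\cdots r_{l-1}(\alpha_{r_l})}{\omega_{s_k}}$; the contiguous subword $r_{L+1}\cdots r_l$ of the reduced word for~$w$ is reduced, so by~\eqref{eq:weaklength} its application to $\alpha_{r_l}$ produces a positive root, whose pairing with the fundamental weight $\omega_{s_k}$ is non-negative. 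In the case $j < k$ one has $L \geq l$; the analogous manipulation plus $s_{r_l}(\alpha_{r_l}) = -\alpha_{r_l}$ reduces the pairing to $-\dotprod{v^{-1}(\alpha_{r_l})}{\omega_{s_k}}$ with $v = r_{l+1}\cdots r_L$, and reducedness of the contiguous subword $r_l v$ places $v^{-1}(\alpha_{r_l})$ in $\Phiplus$, giving the non-positive bound.
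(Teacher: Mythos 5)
Your proof is correct, and it follows essentially the same route as the source: this paper does not reprove the lemma but imports it from \cite[Lemma~4.4]{Pilaud-Stump-2015} with the remark that the original proof goes through verbatim, and that original proof is precisely your direct computation from the definitions of $\Root{I}{\cdot}$ and $\Weight{I}{\cdot}$ (the three-regime factorization around $i$ and $j$ for part~(2), using $v(\alpha_{s_j})=\pm\alpha_{s_i}$ from \Cref{lem:rootsandflips}, and the reduced-contiguous-subword argument giving a positive root paired against a fundamental weight for part~(3)). No gaps.
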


The following lemma is a consequence of \Cref{lem:weightssandflips}.
Due to our sign-switch in the definition of the brick vector and in comparison to the original statement we also have a switch of the direction here.

\begin{lemma}{\cite[Lemma~4.5]{Pilaud-Stump-2015}}
  \label{lem:differenceofbrickvectors}
  If~$I$ and~$J$ are two facets in $\subwordComplex(\Q,w)$ with $I \setminus \{i\} = J \setminus \{j\}$, then the difference of the brick vectors $\brickVector{J} - \brickVector{I}$ is a positive multiple of $\Root{I}{i}$.
\end{lemma}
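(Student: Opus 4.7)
The plan is to compute $\brickVector{J} - \brickVector{I}$ directly from the definition as a sum of weight vector differences, recognize it as a scalar multiple of $\Root{I}{i}$, and then check the sign. Expanding
\[
\brickVector{J} - \brickVector{I} \;=\; -\sum_{k=1}^m \bigl( \Weight{J}{k} - \Weight{I}{k} \bigr),
\]
\Cref{lem:weightssandflips}\ref{eq:weightsbyflips2} says the summand vanishes outside the range $\min(i,j) < k \leq \max(i,j)$, and inside that range $\Weight{J}{k} = s_\beta(\Weight{I}{k})$ with $\beta := \Root{I}{i}$. Applying the standard reflection identity $s_\beta(v) - v = -\langle \beta^\vee, v \rangle \beta$, each nonzero summand becomes a scalar multiple of $\beta$, so the difference of brick vectors is automatically a scalar multiple of $\Root{I}{i}$. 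What remains is a sign analysis.

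To determine the sign I would split into the increasing case $i < j$ and the decreasing case $i > j$. In both cases I would invoke \Cref{lem:weightssandflips}\ref{eq:weightsbyflips3} at the complementary index $j \notin I$, which makes $\langle \Root{I}{j}, \Weight{I}{k}\rangle$ non-negative exactly when $k \leq j$. Combining this with \Cref{lem:rootsandflips}\ref{eq:rootsbyflips}, which gives $\Root{I}{j} = \Root{I}{i}$ when $i < j$ and $\Root{I}{j} = -\Root{I}{i}$ when $i > j$, the two sign swaps---one from whether the summation range lies below or above $j$, the other from the sign relation between $\Root{I}{j}$ and $\Root{I}{i}$---cancel. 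Hence in both cases each summand $\langle \beta^\vee, \Weight{I}{k}\rangle$ has the sign needed for the overall sum to be a non-negative multiple of $\Root{I}{i}$. Lining up the two sign cases consistently is the main place to be careful.

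Strict positivity then follows from the boundary term at $k = \max(i,j)$. Writing $u := \wordprod{\Q}{\{1,\dots,k-1\} \setminus I}$, the definitions yield $\Weight{I}{k} = u(\omega_{s_k})$ and $\beta = \pm u(\alpha_{s_k})$, so $W$-invariance of the inner form reduces the pairing to $\pm\langle \alpha_{s_k}^\vee, \omega_{s_k}\rangle = \pm 1$, with sign matching the one carried by the other summands. This term therefore contributes a strictly positive amount to the scalar, so $\brickVector{J} - \brickVector{I}$ is a strictly positive multiple of $\Root{I}{i}$, as claimed.
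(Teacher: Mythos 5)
Your proof is correct and is essentially the argument from \cite[Lemma~4.5]{Pilaud-Stump-2015}, to which the paper defers (it explicitly states that the original proof carries over unchanged to the present setting). One very small nit: at $k=\max(i,j)$ the sign in $\beta = \pm u(\alpha_{s_k})$ is always $+$ (in the case $i<j$ because $\Root{I}{j}=\Root{I}{i}$, and in the case $i>j$ because the boundary index is $i$ itself), so the boundary contribution is exactly $+1$ rather than $\pm 1$; this only strengthens your conclusion.
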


\begin{proof}[Proof of \Cref{prop:facesofbrickpolyarebrickpoly}]
  Assume first that $\brickVector{I} \in B_f$.
  For every index $i \in I$ we then either have $\Root{I}{i} \in \Eplus{w}{\demazure{\Q}}$ or the index~$i$ is flippable.
  In the former case we have $f(\Root{I}{i}) \geq 0$ as~$f$ is non-negative on $[w,\demazure{\Q}]$.
  In the latter case, let~$J$ be the adjacent facet with $I \setminus \{i\} = J \setminus \{j\}$.
  Then $\brickVector{J} - \brickVector{I}$ is a positive multiple of $\Root{I}{i}$ by \Cref{lem:differenceofbrickvectors} and thus $f(\Root{I}{i}) \geq 0$.
  We thus obtain $I \in \subwordComplex_{f}(\Q,w)$.

  Assume now that $I \in \subwordComplex_{f}(\Q,w)$.
  Let~$K$ be any facet such that $\brickVector{K} \in B_f$.
  It follows from the first part of the proof  that $K \in \subwordComplex_{f}(\Q,w)$, and then from \Cref{prop:connectedcomp} that~$K$ and~$I$ are connected via $f$-preserving flips.
  Again by~\Cref{lem:differenceofbrickvectors}, we obtain $f(\brickVector{I}) = f(\brickVector{K})$ and thus $\brickVector{I} \in B_f$.
\end{proof}

Before proving \Cref{cor:vertexpointedcone}, we collect the following consequence of \Cref{prop:facesofbrickpolyarebrickpoly}.

\begin{corollary}
  \label{cor:edgesareflips}
  Any two facets~$I$ and~$J$ of $\subwordComplex(\Q,w)$ whose brick vectors $\brickVector{I}, \brickVector{J}$ are contained in an edge $E \subseteq \brickPolytope(\Q,w)$ are connected by a flip.
\end{corollary}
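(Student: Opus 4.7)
My plan is to apply Proposition~\ref{prop:facesofbrickpolyarebrickpoly} and Corollary~\ref{prop:connectedcomp} to a linear functional that cuts out the edge $E$. Since $E$ is a one-dimensional face of $\brickPolytope(\Q,w)$, I would choose a linear functional $f : V \rightarrow \RR$ together with a scalar $b \in \RR$ so that $(f,b)$ is a defining hyperplane for $E$, meaning that $f + b$ is non-negative on $\brickPolytope(\Q,w)$ and $B_f = E$. Concretely, any $f$ in the relative interior of the normal cone of $E$ will do.

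To be able to invoke Proposition~\ref{prop:facesofbrickpolyarebrickpoly}, I need $f$ to be non-negative on the rays $\Eplus{w}{\demazure{\Q}}$ of the Bruhat cone. This follows because the brick polyhedron is the Minkowski sum of a polytope with the cone $\Cplus{w}{\demazure{\Q}}$, so $\Cplus{w}{\demazure{\Q}}$ is the recession cone of $\brickPolytope(\Q,w)$. Since $f + b$ is non-negative on the entire brick polyhedron, its linear part $f$ must be non-negative on the recession cone, and in particular on each ray $\beta \in \Eplus{w}{\demazure{\Q}}$.

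Proposition~\ref{prop:facesofbrickpolyarebrickpoly} then yields $\brickVector{K} \in B_f = E$ if and only if $K \in \subwordComplex_f(\Q,w)$, so our two facets $I$ and $J$ both lie in $\subwordComplex_f(\Q,w)$. By Corollary~\ref{prop:connectedcomp}, the subset $\subwordComplex_f(\Q,w)$ forms a connected component of the graph of $f$-preserving flips in $\subwordComplex(\Q,w)$, and therefore $I$ and $J$ are joined by a sequence of $f$-preserving flips, each of which is a flip in $\subwordComplex(\Q,w)$ whose flip root lies in the one-dimensional linear subspace parallel to $E$.

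The main obstacle is the setup step: identifying the recession cone of $\brickPolytope(\Q,w)$ with $\Cplus{w}{\demazure{\Q}}$ so that defining functionals of faces are automatically non-negative on $\Eplus{w}{\demazure{\Q}}$, which is what Proposition~\ref{prop:facesofbrickpolyarebrickpoly} needs. This identification is implicit in Definition~\ref{def:brickpolytope} (since the convex hull of the finite set of brick vectors is bounded), but needs to be spelled out explicitly before the appeal to Proposition~\ref{prop:facesofbrickpolyarebrickpoly} is justified. After that, the corollary is immediate from Corollary~\ref{prop:connectedcomp}.
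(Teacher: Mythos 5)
Your setup and the two lemmas you invoke are exactly the paper's, and you are right that one must observe that a defining functional for a face of $\brickPolytope(\Q,w)$ is automatically non-negative on $\Eplus{w}{\demazure{\Q}}$; the paper records precisely this in the sentence immediately preceding \Cref{prop:facesofbrickpolyarebrickpoly} (``$f$ is a defining functional for $\brickPolytope(\Q,w)$ if and only if it is non-negative on $[w,\demazure{\Q}]$''), by the same recession-cone reasoning you give.

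The gap is in the last step. The corollary claims that $I$ and $J$ are connected by \emph{a} flip, i.e.\ that they are adjacent facets differing in a single position, but your argument only produces a \emph{sequence} of $f$-preserving flips joining them. Observing that every flip root along that path lies in the one-dimensional subspace parallel to $E$ is the right intermediate observation, but it does not by itself shrink the path to length one. The paper closes this gap by using the second half of \Cref{prop:connectedcomp}: the isomorphism $\subwordComplex_f(\Q,w) \cong \subwordComplex(\Q_{\{1,\dots,m\}\setminus\RootsPos{I}{f}},\, w)$ identifies $\subwordComplex_f(\Q,w)$ with a subword complex that is effectively of rank one (because $B_f$ is one-dimensional, all remaining root-function values span a line). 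In a rank-one subword complex the facets are the complements of single positions, so \emph{any} two facets differ in exactly two positions and are therefore pairwise adjacent; pulling this back along the isomorphism gives a single flip between $I$ and $J$ in $\subwordComplex(\Q,w)$. You should invoke this isomorphism and the rank-one structure explicitly rather than stopping at ``connected by a sequence of $f$-preserving flips.''
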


\begin{proof}
  Let~$f$ be a defining functional for the edge~$E = B_f$.
  \Cref{prop:facesofbrickpolyarebrickpoly} then implies that the facets $I,J$ are contained in $\subwordComplex_{f}(\Q,w)$.
  By the isomorphism in \Cref{prop:connectedcomp}, this complex corresponds to a Coxeter system $W'$ of rank~$1$ where any two facets are connected by a flip.
\end{proof}

The following example shows that there are also flips between facets with pointed root configurations that are not edges of the brick polyhedron.

\begin{example}[Type $A_2$]
  \label{ex:innerpointedflip}
  The spherical subword complex $\subwordComplex(211221,121)$ has the eight facets
  \[
  \begin{array}{cccc}
  I_1 = \{1,2,4\},& I_2 = \{1,2,5\},& I_3 = \{1,3,4\},& I_4 = \{1,3,5\}, \\
  I_5 = \{2,4,6\},& I_6 = \{2,5,6\},& I_7 = \{3,4,6\},& I_8 = \{3,5,6\}.
  \end{array}
  \]
  After a shift by $3(\omega_1 + \omega_2)$, we have the polytopal brick polyhedron  $\brickPolytope(\Q,w)$ with flip graph
  \[
  \begin{tikzpicture}[scale=1.2]
  \draw[fill=black!2] (-1.299,0.75) -- (2.598,3) -- (2.598,4.5) -- (1.299,5.25) -- (-2.598,3) -- (-2.598,1.5) -- (-1.299,0.75);
  
  \draw (-1.299,0.75) -- (-1.299,2.25) -- (-2.598,3);
  \draw (-1.299,2.25) -- (1.299,3.75) -- (1.299,5.25);
  \draw (1.299,3.75) -- (2.598,3);
  
  \node (a1) at (-1.299,0.75) {$\bullet$};
  \node (a2) at (-1.299,2.25) {$\bullet$};
  \node (a3) at (-2.598,1.5) {$\bullet$};
  \node (a4) at (-2.598,3) {$\bullet$};
  \node (a5) at (2.598,3) {$\bullet$};
  \node (a6) at (1.299,3.75) {$\bullet$};
  \node (a7) at (2.598,4.5) {$\bullet$};
  \node (a8) at (1.299,5.25) {$\bullet$};
  
  \draw[dashed] (a3) to[bend left=25] (a7);
  
  \node (b1) at (-0.3,0.75) {$\brickVector{I_1} = \del{10}$};
  \node (b2) at (-0.3,2) {$\brickVector{I_2} = \del{21}$};
  \node (b3) at (-3.7,1.5)  {$\brickVector{I_3} = \del{20}$};
  \node (b4) at (-3.7,3)    {$\brickVector{I_4} = \del{31}$};
  \node (b5) at (3.7,3)     {$\brickVector{I_5} = \del{13}$};
  \node (b6) at (1.4,3.2)  {$\brickVector{I_6} = \del{23}$};
  \node (b7) at (3.7,4.5)   {$\brickVector{I_7} = \del{24}$};
  \node (b8) at (0.3,5.25)  {$\brickVector{I_8} = \del{34}$};
  \end{tikzpicture}
  \]
  The facets~$I_3$ and~$I_7$ have pointed root configurations and are connected by a flip which does not correspond to an edge of the brick polyhedron.
\end{example}

\begin{proof}[Proof of \Cref{cor:vertexpointedcone}]
  Fix a facet~$I$ of $\subwordComplex(\Q,w)$.
  Denote by $C_B(I)$ the local cone of the brick polyhedron at the brick vector $\brickVector{I}$.
  We first show $\Roots{I} \subseteq C_B(I)$.
  Let $i \in I$ and $\beta = \Root{I}{i}$.
  If $\beta \in \Eplus{w}{\demazure{\Q}}$ we have
  \[
    \beta \in \Cplus{w}{\demazure{\Q}} \subseteq C_B(I).
  \]
  Otherwise~$i$ is flippable to some facet~$J$, \ie, there is a facet~$J$ and an index~$j \in J$ such that $I \setminus \{i\} = J \setminus \{j\}$.
  We then have $\brickVector{J} - \brickVector{I}$ is a positive multiple of $\beta$ by \Cref{lem:differenceofbrickvectors}.
  As $\brickVector{J} \in \brickPolytope(\Q,w)$ we conclude $\beta \subseteq C_B(I)$.

  The other inclusion $C_B(I) \subseteq \cone\Roots{I}$ is obvious if $\cone\Roots{I} = V$.
  So assume otherwise, let~$v \in V \setminus \cone\Roots{I}$ and let~$f$ be a linear functional which is negative on~$v$ and non-negative on $\Roots{I}$.
  It follows from \Cref{thm:cone_equality} that~$f$ is then also non-negative on $[w,\demazure{\Q}]$.
  By definition, $I \in \subwordComplex_f(\Q,w)$ and \Cref{prop:facesofbrickpolyarebrickpoly} ensures that $\brickVector{I} \in B_f$.
  This yields that $C_B(I)$ is non-negative on~$f$ and we conclude that $v \notin C_B(I)$.
\end{proof}

\begin{remark}
  We have seen in \Cref{prop:facesofbrickpolyarebrickpoly} that brick vectors contained in a face~$B_f$ of the brick polyhedron $\brickPolytope(\Q,w)$ are in one-to-one correspondence with facets in $\subwordComplex_f(\Q,w)$.
  We furthermore have by \Cref{prop:connectedcomp} the identification
  \[
    \subwordComplex_f(\Q,w) \cong \subwordComplex(\Q_{\{1,\dots,m\} \setminus \RootsPos{I_f}{f}}, w),
  \]
  and \Cref{cor:vertexpointedcone} ensures that~$B_f$ and $\brickPolytope(\Q_{\{1,\dots,m\} \setminus \RootsPos{I_f}{f}}, w)$ have the same \emph{local structure}:
  \begin{itemize}
    \item The direction of flips between brick vectors is preserved,
    \item Local cones in $\brickPolytope(\Q_{\{1,\dots,m\} \setminus \RootsPos{I_f}{f}}, w)$ agree with those inside the face~$B_f$ of $\brickPolytope(\Q, w)$,
    \item The normal fans of~$B_f$ and of $\brickPolytope(\Q_{\{1,\dots,m\} \setminus \RootsPos{I_f}{f}}, w)$ coincide, and
    \item $B_f$ is polytopal if and only if $\subwordComplex(\Q_{\{1,\dots,m\} \setminus \RootsPos{I_f}{f}}, w)$ is spherical.
  \end{itemize}
  Nevertheless, one may check that~$B_f$ and $\brickPolytope(\Q_{\{1,\dots,m\} \setminus \RootsPos{I_f}{f}}, w)$ do not necessarily coincide.
  Take the edge connecting~$\brickVector{I_1}$ and~$\brickVector{I_5}$ in \Cref{ex:innerpointedflip}.
  The defining functional~$f$ is zero on $\del{01}$ and positive on $\del{10}$ and $\del{11}$ and we obtain
  \[
    \subwordComplex_f(\Q,w) = \{ I_1 = \{1,2,4\},~ I_5 = \{2,4,6\} \} \ \text{ and } \ \RootsPos{I_1}{f} = \RootsPos{I_5}{f} = \{ 2, 4\}.
  \]
  The reduced subword complex then is $\subwordComplex_f(\Q,w) \cong \subwordComplex(2121,121)$ with facets
  \[
  J_1 = \{ 1 \} \ \text{ and } \ J_2 = \{ 4 \},
  \]
  and brick vectors $\brickVector{J_1} = \del{\mi{12}}$ and $\brickVector{J_2} = \del{\mi{10}}$.
  We thus see
  \[
  \brickVector{I_5} - \brickVector{I_I} = \del{03} \ \text{ and } \ \brickVector{J_2} - \brickVector{J_1} = \del{02},
  \]
  hence this edge has different lengths in the two brick polytopes. 
\end{remark}

\subsection{Normal fans of brick polyhedra from Coxeter fans}
\label{sec:furtherproperties}

\Cref{cor:vertexpointedcone} makes it possible to generalize further properties developed for brick polytopes in \cite{Pilaud-Stump-2015} to brick polyhedra for general subword complexes.
Several proofs in this section are similar to those given in \cite{Pilaud-Stump-2015}.
Define the \Dfn{Coxeter fan} of~$W$ as
\[
  \coxeterfan{W} = \bigset{w(\cone\nabla')}{w \in W,\ \nabla' \subseteq \nabla}
\]
with \Dfn{fundamental chamber} $\mathcal{C} = \cone(\nabla)$ being the cone generated by the fundamental weights.
The aim of this section is to describe how to glue together and delete chambers in the Coxeter fan to obtain the normal fan of the brick polyhedron .
To this end, associate to a Bruhat interval $[x,y]$ a (lower) order ideal in the weak order by
\begin{align*}
  \idweak{x,y} & = \bigset{w \in W}{\Eplus{x}{y} \subseteq w(\Phiplus)} \\
               & = \bigset{w \in W}{\inv(w) \cap \Eplus{x}{y} = \emptyset}.
\end{align*}
This is indeed a lower order ideal as for $w \in \idweak{x,y}$ and $z \leqweak w$ we have $\inv(z) \subseteq \inv(w)$ and thus $z \in \idweak{x,y}$.

\begin{proposition}
  Let $\subwordComplex(\Q,w)$ be a non-empty subword complex and let $z \in W$.
  Then there exists a facet~$I$ such that $\Roots{I} \subseteq z(\Phiplus)$ if and only if $z \in \idweak{w,\demazure{\Q}}$.
  In this case, the facet~$I$ is uniquely given by the facet $I_f$ produced by \Cref{algo:uniquefacet} for the linear functional~$f$ which is positive on $z(\Phiplus)$ and negative on $z(\Phiminus)$.
\end{proposition}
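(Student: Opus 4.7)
The plan is to unify the equivalence with the uniqueness claim by invoking \Cref{algo:uniquefacet}, \Cref{prop:algowelldefined}, \Cref{prop:nonflipablerootincone} and \Cref{thm:uniquefantigreedyfacet}. First I would fix a concrete choice of $f$, for instance $f := \innerform{z(\rho)}{\cdot}$ with $\rho = \sum_{s \in \sref}\omega_s$ the strictly dominant vector in the interior of the fundamental chamber. Since $\Phi = z(\Phiplus)\sqcup z(\Phiminus)$ and $\innerform{\rho}{\alpha}$ has the sign of $\alpha$ on $\Phi$, this $f$ is strictly positive on $z(\Phiplus)$ and strictly negative on $z(\Phiminus)$; in particular $f$ is nonzero on every root. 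Consequently $\Roots{I}\subseteq z(\Phiplus)$ is equivalent to $f(\Root{I}{i})\geq 0$ for all $i\in I$, and condition~\ref{it:algowelldefined-c} of \Cref{prop:algowelldefined} becomes vacuous because no value $f(\Root{I}{i})$ can be zero.

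For the direction ``there exists such a facet $\Rightarrow z\in \idweak{w,\demazure{\Q}}$'' I would start from any facet $I$ with $\Roots{I}\subseteq z(\Phiplus)$ and apply \Cref{prop:nonflipablerootincone} to obtain
\[
  \Eplus{w}{\demazure{\Q}}\ \subseteq\ \cone\Roots{I}\ \subseteq\ \cone(z(\Phiplus)) \ =\ z(\cone\Phiplus).
\]
Because $\Phiplus$ and $\Phiminus$ are separated by a hyperplane in the sense of~\eqref{eq:separatedsets}, one has $\cone(\Phiplus)\cap\Phi = \Phiplus$; applying $z$ and using $z(\Phi) = \Phi$ yields $\cone(z(\Phiplus))\cap\Phi = z(\Phiplus)$. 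Since $\Eplus{w}{\demazure{\Q}}\subseteq\Phi$, the containment sharpens to $\Eplus{w}{\demazure{\Q}}\subseteq z(\Phiplus)$, which is exactly the defining condition $z\in \idweak{w,\demazure{\Q}}$.

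For the converse direction, producing a facet, I would feed the above $f$ into \Cref{algo:uniquefacet}. The hypothesis $z\in \idweak{w,\demazure{\Q}}$ gives $\Eplus{w}{\demazure{\Q}}\subseteq z(\Phiplus)$, so $f$ is (strictly) positive on $\Eplus{w}{\demazure{\Q}}$ and the algorithm applies. By \Cref{prop:algowelldefined}\ref{it:algowelldefined-b} the resulting facet $I_f$ satisfies $f(\Root{I_f}{i})\geq 0$ for every $i\in I_f$, and the sign observation of the first paragraph turns this into $\Roots{I_f}\subseteq z(\Phiplus)$, proving existence. Uniqueness now falls out of \Cref{thm:uniquefantigreedyfacet}: any facet $I$ with $\Roots{I}\subseteq z(\Phiplus)$ verifies conditions~\ref{it:algowelldefined-b} and~\ref{it:algowelldefined-c} of \Cref{prop:algowelldefined} (the latter vacuously, since no $f(\Root{I}{i})$ is zero), forcing $I = I_f$.

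The only subtle point, and the reason everything aligns, is the observation that $f$ vanishes on no root; without it the existence step would only yield $\Roots{I_f}\subseteq z(\Phiplus)\cup\{0\}$ and the uniqueness step would leave a gap coming from the exceptional case in \Cref{prop:algowelldefined}\ref{it:algowelldefined-c}. Since the bulk of the work has already been carried out in \Cref{prop:nonflipablerootincone} and \Cref{thm:uniquefantigreedyfacet}, no genuine obstacle is expected beyond this sign bookkeeping.
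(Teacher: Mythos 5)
Your proof is correct and follows essentially the same route as the paper's: use the containment $\Eplus{w}{\demazure{\Q}} \subseteq \cone\Roots{I}$ for the ``only if'' direction, and combine \Cref{algo:uniquefacet}, \Cref{prop:algowelldefined} and \Cref{thm:uniquefantigreedyfacet} for existence and uniqueness. The only cosmetic difference is that you invoke \Cref{prop:nonflipablerootincone} directly (the single inclusion you actually need), whereas the paper cites the full equality of \Cref{thm:cone_equality}; your version is slightly more economical. Your explicit remark that $f$ vanishes on no root, making \Cref{prop:algowelldefined}\ref{it:algowelldefined-c} vacuous, is a worthwhile clarification that the paper leaves implicit, though it is already built into the hypothesis that $f$ is positive on $z(\Phiplus)$ and negative on $z(\Phiminus)$ (so any admissible $f$, not just your $\innerform{z(\rho)}{\cdot}$, is automatically nonzero on $\Phi$).
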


\begin{proof}
  It follows from \Cref{thm:cone_equality} that for $z \notin \idweak{w,\demazure{\Q}}$ there does not exist a facet~$I$ such that $\Roots{I} \subseteq z(\Phiplus)$.
  Now let $z \in \idweak{w,\demazure{\Q}}$ and let~$f$ be a linear functional as in the statement.
  Then $\Eplus{w}{\demazure{\Q}} \subseteq z(\Phiplus)$ ensures that~$f$ is positive for the Bruhat interval $[w,\demazure{\Q}]$.
  \Cref{thm:uniquefantigreedyfacet} then gives that the facet~$I_f$ is the unique facet with $f$-positive root configuration.
  In other words, $I_f$ is the unique facet for which $\Roots{I} \subseteq z(\Phiplus)$.
\end{proof}

For a non-empty subword complex $\subwordComplex(\Q,w)$, this proposition allows to define a map
\[
  \kappa : \idweak{w,\demazure{\Q}} \rightarrow \subwordComplex(\Q,w)
\]
by sending $z \in \idweak{w,\demazure{\Q}}$ to the unique facet~$I_f$ with $\Roots{I} \subseteq z(\Phiplus)$ where~$f$ and~$I_f$ are given as in the proposition.

\begin{lemma}
  Let $\subwordComplex(\Q,w)$ be non-empty and $x,y \in \idweak{w,\demazure{\Q}}$.
  If $\kappa(x) = \kappa(y)$ then
  \[
    \kappa(x) = \kappa(w) = \kappa(y) \text{ for all } w \in \weakint{x}{y}.
  \]
\end{lemma}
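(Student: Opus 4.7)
The plan is to unfold the definition of $\kappa$ into a condition on inversion sets and then invoke the standard description of weak order intervals via inversion set containment.

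First, I would recall that by definition $\kappa(z) = I$ is equivalent to the containment $\Roots{I} \subseteq z(\Phiplus)$, where $I$ is the unique such facet. Splitting the root configuration into its positive and negative parts $\Roots{I}^+ = \Roots{I} \cap \Phiplus$ and $\Roots{I}^- = \Roots{I} \cap \Phiminus$, I would rewrite this containment in terms of $\inv(z) = \Phiplus \cap z(\Phiminus)$ as follows. For a positive root $\beta \in \Phiplus$, we have $\beta \in z(\Phiplus)$ iff $\beta \notin \inv(z)$; for a negative root $\beta \in \Phiminus$, we have $\beta \in z(\Phiplus)$ iff $-\beta \in \inv(z)$. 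Hence $\Roots{I} \subseteq z(\Phiplus)$ is equivalent to the two conditions
\[
  \Roots{I}^+ \cap \inv(z) = \emptyset \qquad \text{and} \qquad -\Roots{I}^- \subseteq \inv(z).
\]

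Now set $I := \kappa(x) = \kappa(y)$ and take $v \in \weakint{x}{y}$. Using $z \leqweak z' \Leftrightarrow \inv(z) \subseteq \inv(z')$, the interval condition translates to $\inv(x) \subseteq \inv(v) \subseteq \inv(y)$. Applying the description above at $z = y$ yields $\Roots{I}^+ \cap \inv(y) = \emptyset$, and intersecting with $\inv(v) \subseteq \inv(y)$ gives $\Roots{I}^+ \cap \inv(v) = \emptyset$. Applying the description at $z = x$ yields $-\Roots{I}^- \subseteq \inv(x) \subseteq \inv(v)$. Combining the two, the criterion shows $\Roots{I} \subseteq v(\Phiplus)$, so $\kappa(v) = I$ by uniqueness, which is the required equality.

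There is essentially no obstacle here; the argument is a direct manipulation once the criterion is phrased via inversion sets. The only mild care needed is to treat both signs in $\Roots{I}$ separately (for facets like the antigreedy one the root configuration is genuinely not contained in $\Phiplus$). I also note that $v \in \idweak{w,\demazure{\Q}}$, so that $\kappa(v)$ is defined, is automatic because $\idweak{w,\demazure{\Q}}$ was observed to be a lower order ideal in weak order and $v \leqweak y \in \idweak{w,\demazure{\Q}}$.
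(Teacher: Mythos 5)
Your proof is correct and follows essentially the same route as the paper: both translate the defining condition $\Roots{\kappa(z)} \subseteq z(\Phiplus)$ into a pair of conditions on $\inv(z)$ (one for the positive roots in $\Roots{I}$, one for the negative), and then squeeze $\inv(v)$ between $\inv(x)$ and $\inv(y)$. Your additional remark that $v \in \idweak{w,\demazure{\Q}}$ is automatic because it is a lower order ideal is a correct and worthwhile sanity check, though the paper leaves it implicit.
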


\begin{proof}
  Let $\kappa(x) = I = \kappa(y)$ and consider $w \in \weakint{x}{y}$.
  Recall that for any $z \in W$, we have
  \[
    z(\Phiplus) \cap \Phiplus = \Phiplus \setminus \inv(z), \quad
    z(\Phiplus) \cap \Phiminus = -\inv(z)\ .
  \]
  With $\kappa(x) = I = \kappa(y)$ and $\inv(x) \subseteq \inv(w) \subseteq \inv(y)$, we obtain
  \[
    \Roots{I} \cap \Phiplus \subseteq \Phiplus \setminus \inv(y) \subseteq \Phiplus \setminus \inv(w), \quad
    \Roots{I} \cap \Phiminus \subseteq -\inv(x) \subseteq -\inv(w).
  \]
  The statement follows.
\end{proof}

\begin{example}[Type $A_2$]
  We continue \Cref{ex:brickpoly} with $\Q = 11212$ and $w = 12$.
  We then have $\Eplus{12}{121} = \{\del{01}\}$ and $\idweak{12,121} = \{ e, 1, 12 \}$.
  The brick polyhedron with normal fan is
  \[
    \scalebox{0.9}{
    \begin{tikzpicture}[scale=1.5]
      \draw[fill=grey!10, grey!10] (2.598,3) -- (-1.299,0.75) -- (-1.299,-0.75) -- (1.299,-2.25) -- (5.196,0) -- (2.598,3);
      
      \draw[green!30, line width=2pt, dashed] (0,-1.5) -- (2.625,3.0465);
      \draw[green!30, line width=2pt, dashed] (-0.8927,3.0465) -- (2.625,-3.0465);
      \draw[green!30, line width=2pt, dashed] (-1.299,0) -- (6,0);
      
      \draw[green!30, line width=2pt, -{>[scale=1.5]}] (0.8505,0) -- (1.8505,0);
      \draw[green!30, line width=2pt, -{>[scale=1.5]}] (0.8505,0) -- (0.375,0.8505);
      \draw[green!30, line width=2pt, -{>[scale=1.5]}] (0.8505,0) -- (1.375,0.8505);
      \draw[green!30, line width=2pt, -{>[scale=1.5]}] (0.8505,0) -- (1.375,-0.8505);

      \node[grey] (a1) at (1,-2.5) {$\kappa(e) = \brickVector{\{1,2,3\}}$};
      \node[grey] (a4) at (-2.5,-0.75) {$\kappa(1) = \brickVector{\{2,3,4\}}$};
      \node[grey] (a5) at (-2.5,0.75)  {$\kappa(12) = \brickVector{\{2,4,5\}}$};
      
      \draw (2.598,3) -- (-1.299,0.75) -- (-1.299,-0.75) -- (1.299,-2.25) -- (5.196,0);
      \draw[-{>[scale=1.2]}] (-1.299,0.75) -- (2.598,3);
      \draw[-{>[scale=1.2]}] (1.299,-2.25) -- (5.196,0);
      
      \node (b1) at (1.299,-2.25) {$\bullet$};
      \node (b2) at (0,-1.5) {$\bullet$};
      \node (b3) at (0,0) {$\bullet$};
      \node (b4) at (-1.299,-0.75) {$\bullet$};
      \node (b5) at (-1.299,0.75) {$\bullet$};
      
      \node[olive] (c1) at (2.5,1) {$1(\mathcal{C})$};
      \node[olive] (c2) at (2.5,-1) {$12(\mathcal{C})$};
      \node[olive] (c3) at (0.75,2.5) {$e(\mathcal{C}) = \mathcal{C}$};
      \node[olive] (c4) at (0.75,-3) {$121(\mathcal{C}) = 212(\mathcal{C})$};
      \node[olive] (c5) at (-2.5,1.5) {$21(\mathcal{C})$};
      \node[olive] (c6) at (-2.5,-1.5) {$2(\mathcal{C})$};
    \end{tikzpicture}
    }
  \]
  In particular we see that
  \begin{gather*}
    e(\mathcal{C}) \text{ is the inner normal cone of } \kappa(e) = \{1,2,3\}, \\
    1(\mathcal{C}) \text{ is the inner normal cone of } \kappa(1) = \{2,3,4\}, \\
    12(\mathcal{C}) \text{ is the inner normal cone of } \kappa(12) = \{2,4,5\},
  \end{gather*}
  and the union $2(\mathcal{C}) \cup 21(\mathcal{C}) \cup 212(\mathcal{C})$ is the set of linear functionals not non-negative on $\Cplus{12}{121}$.
\end{example}

\begin{example}[Type $B_2$]
  Let $Q = 2221$ and $w = 2$.
  Then $\demazure{\Q} = 12$ and $\Eplus{2}{21} = \{ \del{12} \}$.
  Furthermore we have that $\idweak{2,21} = \{ e, 1, 12, 2 \}$ is an order ideal that is not an interval.
  In $\subwordComplex(\Q,w)$ we have the three facets with brick vectors
  \[
    \brickVector{\{1,2,4\}} = \tfrac{1}{2}(\del{\mi{58}}),\quad
    \brickVector{\{1,3,4\}} = \tfrac{1}{2}(\del{\mi{57}}),\quad
    \brickVector{\{2,3,4\}} = \tfrac{1}{2}(\del{\mi{56}}),
  \]
  that form an edge.
  The brick polyhedron is thus
  \[
      \brickPolytope(\Q,w) = \conv\big\{ \tfrac{1}{2}(\del{\mi{58}}),\ \tfrac{1}{2}(\del{\mi{56}}) \big\} + \cone\big\{ \del{12} \big\}.
  \]
  We furthermore have
  \[
    \kappa(e) = \kappa(1) = \kappa(12) = \{1,2,4\} \ \text{ and } \ \kappa(2) = \{2,3,4\}
  \]
  as shown in the weak order of type~$B_2$:
  \[
    \begin{tikzpicture}[scale=1.0]
      \node (e) at (0,0) {$e$};
      \node (1) at (-1,1) {$1$};
      \node (2) at (1,1) {$2$};
      \node (12) at (-1,2) {$12$};
      \node[grey] (21) at (1,2) {$21$};
      \node[grey] (121) at (-1,3) {$121$};
      \node[grey] (212) at (1,3) {$212$};
      \node[grey] (1212) at (0,4) {$1212=2121$};
      
      \draw (12) -- (1) -- (e) -- (2);
      \draw[grey] (12) -- (121) -- (1212) -- (212) -- (21) -- (2);
      
      \draw[red] (-1.3,2.2) -- (-0.7,2.2) -- (-0.7,1) -- (0.3,0) -- (0.3,-0.2) -- (-0.3,-0.2) -- (-1.3,0.8) -- (-1.3,2.2);
      
      \draw[blue] (0.7,1.2) -- (1.3,1.2) -- (1.3,0.8) -- (0.7,0.8) -- (0.7,1.2);
      
      \node[red] (i1) at (-2,1) {$\{1,2,4\}$};
      \node[blue] (i2) at (2,1) {$\{2,3,4\}$};
    \end{tikzpicture}
  \]
\end{example}

\begin{proposition}
  \label{lem:kappasurjective}
  The map~$\kappa$ maps surjectively onto the facets of $\subwordComplex(\Q,w)$ with pointed root configurations.
\end{proposition}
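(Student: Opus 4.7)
The plan is to prove both inclusions characterizing the image of $\kappa$: first that every facet in the image has pointed root configuration (which is essentially definitional), and then the substantial direction, that every facet with pointed root configuration is attained by $\kappa$.

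For the easy direction, if $I = \kappa(z)$ for some $z \in \idweak{w,\demazure{\Q}}$, then by construction $\Roots{I} \subseteq z(\Phiplus)$, and any linear functional $f$ which is positive on $z(\Phiplus)$ and negative on $z(\Phiminus)$ witnesses pointedness of $\cone\Roots{I}$.

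For the main direction, I would start with a facet $I \in \subwordComplex(\Q,w)$ whose root configuration $\Roots{I}$ is pointed, so there exists a linear functional $g : V \to \RR$ which is strictly positive on every element of $\Roots{I}$. By perturbing $g$ slightly within the open cone of functionals positive on $\Roots{I}$, I may additionally assume that $g$ does not vanish on any positive root, while still remaining positive on $\Roots{I}$. The first key step is then to observe that \Cref{prop:nonflipablerootincone} (equivalently \Cref{thm:cone_equality}) gives $\Eplus{w}{\demazure{\Q}} \subseteq \cone\Roots{I}$, so $g$ is automatically strictly positive on every cover label $\beta \in \Eplus{w}{\demazure{\Q}}$ as well.

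The second key step is to extract an element $z \in W$ from this functional. Because $g$ is generic, the set $X = \{\beta \in \Phiplus : g(\beta) < 0\}$ is separated from its complement in $\Phiplus$ by the hyperplane $\{v : g(v) = 0\}$, so by the criterion recorded in~\eqref{eq:separatedsets}, there is a (unique) $z \in W$ with $\inv(z) = X$. Then using $z(\Phiplus) = (\Phiplus \setminus \inv(z)) \cup (-\inv(z))$, one checks that $g$ is positive on $z(\Phiplus)$ and negative on $z(\Phiminus)$, so $g$ is precisely a functional of the type used to define $\kappa(z)$. Since $g$ is positive on $\Eplus{w}{\demazure{\Q}}$, this set lies in $z(\Phiplus)$, so $z \in \idweak{w,\demazure{\Q}}$ and $\kappa(z)$ is defined. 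Finally, the uniqueness clause in the proposition preceding the definition of $\kappa$ forces $\kappa(z)$ to be the unique facet whose root configuration lies in $z(\Phiplus)$; but $\Roots{I} \subseteq z(\Phiplus)$ by the choice of $g$, and so $\kappa(z) = I$, completing the proof.

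The main conceptual obstacle is step two, turning the abstract functional $g$ into a Coxeter group element $z$; this requires the genericity perturbation together with the inversion-set characterization from~\eqref{eq:separatedsets}. Once $z$ is constructed, verifying $z \in \idweak{w,\demazure{\Q}}$ and $\kappa(z) = I$ is a direct unwinding of definitions, relying crucially on the earlier containment $\Eplus{w}{\demazure{\Q}} \subseteq \cone\Roots{I}$ to ensure that $\kappa$ is defined at $z$.
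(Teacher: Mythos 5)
Your proof is correct and takes essentially the same approach as the paper: pick a generic linear functional positive on $\Roots{I}$, read off a Coxeter group element $z$ via the separation criterion~\eqref{eq:separatedsets}, and conclude $\kappa(z)=I$ by the defining uniqueness of $\kappa$. You are somewhat more explicit than the paper in verifying that $z \in \idweak{w,\demazure{\Q}}$ (via \Cref{prop:nonflipablerootincone}), a step the paper leaves implicit, but this is a detail rather than a different route.
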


\begin{proof}
  Since $\Roots{\kappa(z)} \subseteq z(\Phiplus)$, the facet $\kappa(z)$ has a pointed root configuration by construction.
  For a facet~$I$ with pointed root configuration, let~$f$ be any linear functional that is positive on~$\Roots{I}$ and non-zero on all roots.
  Then there is a unique element $z \in W$ with $z(\Phiplus) = \set{\beta \in \Phi}{f(\beta) > 0}$ and conclude that $\kappa(z) = I$.
\end{proof}

The main theorem of this section is the following generalization of \cite[Proposition 5.4]{Pilaud-Stump-2015}.

\begin{theorem}
  \label{thm:brickpolynormalcone}
  Let $\brickVector{I}$ be a vertex of $\brickPolytope(\Q,w)$, \ie, $I \in \subwordComplex(\Q,w)$ is a facet with pointed root configuration.
  The (closure of the) normal cone $\normalcone{\brickVector{I}}$ is the union of the chambers $z( \mathcal{C} )$ of $\coxeterfan{W}$ given by the elements $z \in W$ with $\kappa(z) = I$.
\end{theorem}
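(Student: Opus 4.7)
The plan is to spell out both sides of the claimed equality explicitly in terms of the inner form and then bridge them with a perturbation argument. By \Cref{cor:vertexpointedcone}, the local cone of $\brickPolytope(\Q,w)$ at the brick vector $\brickVector{I}$ equals $\cone\Roots{I}$, so the inner normal cone is the dual cone; identifying $V^*$ with $V$ through the $W$-invariant inner form, this reads
\[
  \normalcone{\brickVector{I}} = \bigset{v \in V}{\langle v, \beta \rangle \geq 0 \text{ for all } \beta \in \Roots{I}}.
\]
On the Coxeter-fan side, the computation $\langle \omega_t, \alpha_s \rangle = \delta_{s=t}\langle \alpha_s,\alpha_s \rangle/2$ gives $\mathcal{C} = \{v \in V : \langle v, \alpha \rangle \geq 0 \text{ for all } \alpha \in \Phiplus\}$, and $W$-equivariance yields $z(\mathcal{C}) = \{v \in V : \langle v, \gamma \rangle \geq 0 \text{ for all } \gamma \in z(\Phiplus)\}$, with the interior of $z(\mathcal{C})$ consisting precisely of those $v$ for which $z(\Phiplus) = \{\gamma \in \Phi : \langle v, \gamma \rangle > 0\}$. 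The easy inclusion then follows immediately: if $\kappa(z) = I$, then $\Roots{I} \subseteq z(\Phiplus)$ by definition of $\kappa$, and the two descriptions above give $z(\mathcal{C}) \subseteq \normalcone{\brickVector{I}}$.

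For the reverse inclusion, given $v \in \normalcone{\brickVector{I}}$, the plan is to perturb $v$ in a carefully chosen direction $f_0$ into the interior of a single chamber $z(\mathcal{C})$ with $\kappa(z) = I$. Since $\brickVector{I}$ is a vertex, the cone $\cone\Roots{I}$ is pointed and the interior of its dual cone is non-empty, so one may pick $f_0$ with $\langle f_0, \beta \rangle > 0$ for every $\beta \in \Roots{I}$; by openness of this set, one may further impose the finitely many generic conditions $\langle f_0, \gamma \rangle \neq 0$ for every root $\gamma \in \Phi$ annihilated by $v$. For small $\epsilon > 0$, the vector $v' := v + \epsilon f_0$ then avoids every root hyperplane: on roots where $\langle v, \gamma \rangle \neq 0$ the sign of $\langle v', \gamma \rangle$ matches that of $\langle v, \gamma \rangle$, and on the remaining roots it matches that of $\langle f_0, \gamma \rangle$. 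Hence $v'$ lies in the interior of a unique chamber $z(\mathcal{C})$, and $\Roots{I} \subseteq z(\Phiplus)$ by construction, so $\kappa(z) = I$. To conclude $v \in z(\mathcal{C})$, I will observe that for any $\gamma \in z(\Phiplus)$ we have $\langle v', \gamma \rangle > 0$, while a strict inequality $\langle v, \gamma \rangle < 0$ would survive a small perturbation and contradict this; hence $\langle v, \gamma \rangle \geq 0$ for every $\gamma \in z(\Phiplus)$.

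The main obstacle I anticipate is the selection of $f_0$: it must simultaneously be strictly positive on the root configuration $\Roots{I}$ and be in general position with respect to the finitely many root hyperplanes through $v$. Pointedness of $\cone\Roots{I}$ — the hypothesis that $\brickVector{I}$ is a vertex — is exactly what makes these two requirements compatible, and this is the only place in the argument where the vertex hypothesis enters.
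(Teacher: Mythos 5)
Your proof is correct and takes a genuinely different route from the paper's. The paper deduces the normal cone description by first establishing \Cref{lem:rootconfunderkappa}, which reverse-engineers $\cone\Roots{I}$ as the intersection $\bigcap_{z \in \kappa^{-1}(I)} z(\cone\Phiplus)$, and then invokes the elementary duality that converts a normal cone of an intersection into a union of normal cones. \Cref{lem:rootconfunderkappa} in turn rests on the structural \Cref{lem:weakcoversunderkappa} tracing how $\kappa$ changes along weak covers, which itself pulls in \Cref{cor:edgesareflips} and \Cref{prop:facesofbrickpolyarebrickpoly}. You bypass this whole chain: after using \Cref{cor:vertexpointedcone} to identify $\normalcone{\brickVector{I}}$ with the dual cone of $\cone\Roots{I}$, the easy inclusion $z(\mathcal{C}) \subseteq \normalcone{\brickVector{I}}$ is read directly off the definition of $\kappa$, and the reverse inclusion is a local perturbation argument: a generic $f_0$ that is strictly positive on $\Roots{I}$ (possible since $\cone\Roots{I}$ is pointed) pushes any $v$ in the normal cone into the interior of a chamber $z(\mathcal{C})$ while certifying both $\kappa(z) = I$ (via the uniqueness in the proposition defining $\kappa$) and $v \in z(\mathcal{C})$ (since the strict signs of the inner form on $z(\Phiplus)$ cannot flip back below zero as $\epsilon \to 0$). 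Your argument is shorter, more self-contained, and more elementary; the trade-off is that it does not separately produce the intersection identity of \Cref{lem:rootconfunderkappa} or the behavior of $\kappa$ along weak covers, which are statements of independent interest (though, dualizing your result immediately recovers \Cref{lem:rootconfunderkappa}). One small presentational point: when you assert that $v'$ in the interior of $z(\mathcal{C})$ gives $\kappa(z) = I$, it is worth noting explicitly that the existence of the facet $I$ with $\Roots{I} \subseteq z(\Phiplus)$ already forces $z \in \idweak{w,\demazure{\Q}}$ by the proposition before \Cref{lem:weakcoversunderkappa}, so $\kappa(z)$ is defined; you implicitly use this but should say it.
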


We also get the following generalization of \cite[Corollary ~5.5]{Pilaud-Stump-2015}.

\begin{corollary}
\label{thm:coarseningcoxeter}
  The normal fan $\normalfan{\brickPolytope(\Q,w)}$ is obtained from the Coxeter fan by glueing together the chambers corresponding to fibers of the map~$\kappa$, and deleting the chambers corresponding to elements in~$W$ not in $\idweak{w,\demazure{\Q}}$.
\end{corollary}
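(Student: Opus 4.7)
The plan is to derive the corollary as a direct consequence of \Cref{thm:brickpolynormalcone} together with \Cref{lem:kappasurjective}, once one verifies that the chambers outside $\idweak{w,\demazure{\Q}}$ do not contribute to the normal fan.

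First I would use \Cref{cor:vertexpointedcone} and \Cref{lem:kappasurjective} to identify the vertices of $\brickPolytope(\Q,w)$ with the image of the map~$\kappa$: a brick vector $\brickVector{I}$ is a vertex precisely when its root configuration is pointed, and such facets are exactly those in $\kappa(\idweak{w,\demazure{\Q}})$. Then \Cref{thm:brickpolynormalcone} states that the normal cone at each vertex $\brickVector{I}$ is the union of the chambers $z(\mathcal{C})$ as~$z$ ranges over the fibre $\kappa^{-1}(I)$. Taking the union over all vertices, the maximal cones of $\normalfan{\brickPolytope(\Q,w)}$ are obtained from the Coxeter chambers $\bigset{z(\mathcal{C})}{z \in \idweak{w,\demazure{\Q}}}$ by identifying those that lie in the same fibre of~$\kappa$, which is the gluing part of the statement.

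It then remains to explain why the chambers $z(\mathcal{C})$ with $z \notin \idweak{w,\demazure{\Q}}$ are absent from $\normalfan{\brickPolytope(\Q,w)}$. For such~$z$, the second description of $\idweak{w,\demazure{\Q}}$ provides some $\beta \in \inv(z) \cap \Eplus{w}{\demazure{\Q}}$. Any linear functional~$f$ in the interior of $z(\mathcal{C})$ then satisfies $f(\beta) < 0$ by the characterization of inversion sets in~\eqref{eq:separatedsets}, while $\beta \in \Cplus{w}{\demazure{\Q}}$ is a recession direction of $\brickPolytope(\Q,w)$ by definition of the brick polyhedron. Hence~$f$ is unbounded below on $\brickPolytope(\Q,w)$ and lies in the normal cone of no face, which justifies the deletion part of the statement.

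The only mildly delicate point is to check that this recipe genuinely assembles into a fan, i.e.\ that the lower-dimensional cones fit together correctly. This is immediate from \Cref{thm:brickpolynormalcone}: each Coxeter chamber lies entirely in a single maximal normal cone (when $z \in \idweak{w,\demazure{\Q}}$) or entirely outside the support of $\normalfan{\brickPolytope(\Q,w)}$ (otherwise), so the lower-dimensional cones of the normal fan arise in the standard way as intersections of adjacent maximal cones along their common walls. The main structural content of the argument is therefore fully packaged inside \Cref{thm:brickpolynormalcone}, with the present statement serving as its global repackaging.
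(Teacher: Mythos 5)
Your proof is correct and takes essentially the same approach as the paper, which simply cites \Cref{thm:brickpolynormalcone} in a one-line argument; you have usefully spelled out the details — in particular the deletion step via recession directions — that the paper leaves implicit.
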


The crucial parts of the proof of \Cref{thm:brickpolynormalcone} is extracted into the following two lemmas.
The first generalizes \cite[Lemma~5.3]{Pilaud-Stump-2015}.

\begin{lemma}
\label{lem:weakcoversunderkappa}
  Let $z \in \idweak{w,\demazure{\Q}}$ and $s \in \sref$ such that $z s \in \idweak{w,\demazure{\Q}}$.
  Then $\kappa(zs)$ is obtained from $\kappa(z)$ as follows:
  \begin{itemize}
    \item If $z(\alpha_s) \in \Roots{\kappa(z)}$, then $\kappa(zs)$ is obtained from $\kappa(z)$ by flipping the unique index $i \in I$ such that $\Root{I}{i} = z(\alpha_s)$ and the obtained facet has again a pointed root configuration.
    \item If $z(\alpha_s) \notin \Roots{\kappa(z)}$, then $\kappa(zs) = \kappa(z)$.
  \end{itemize}
\end{lemma}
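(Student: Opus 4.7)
The plan is to identify $\kappa(zs)$ via the uniqueness property from the preceding proposition---that for every $z' \in \idweak{w,\demazure{\Q}}$ there is a unique facet of $\subwordComplex(\Q,w)$ with root configuration contained in $z'(\Phiplus)$. Writing $I = \kappa(z)$ and $\beta = z(\alpha_s)$, the crucial observation is that $zs(\Phiplus) = (z(\Phiplus) \setminus \{\beta\}) \cup \{-\beta\}$, so $z(\Phiplus)$ and $zs(\Phiplus)$ differ only in the sign of $\beta$, and moreover $s_\beta(z(\Phiplus)) = zs(\Phiplus)$. In the easy case $\beta \notin \Roots{I}$, this immediately gives $\Roots{I} \subseteq z(\Phiplus) \setminus \{\beta\} \subseteq zs(\Phiplus)$, so by uniqueness $\kappa(zs) = I$, settling the second bullet.

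For the first bullet, where $\beta \in \Roots{I}$, I would first argue $\beta \notin \Eplus{w}{\demazure{\Q}}$: since $zs \in \idweak{w,\demazure{\Q}}$, by definition $\Eplus{w}{\demazure{\Q}} \subseteq zs(\Phiplus)$, while $\beta \notin zs(\Phiplus)$. Hence by \Cref{prop:bruhatcoversandrootfunction} any $i \in I$ with $\Root{I}{i} = \beta$ is flippable. Picking such an $i$, let $J$ be the flipped facet, and let $j$ be the flip target---the unique position in the complement of $I$ with $\Root{I}{j} = \beta$ by \Cref{lem:rootsandflips}. I would then verify $\Roots{J} \subseteq zs(\Phiplus)$ using \Cref{lem:rootsandflips}(3): positions in $I \setminus \{i\}$ lying in the range $(\min(i,j),\max(i,j)]$ get reflected by $s_\beta$, which carries $z(\Phiplus)$ to $zs(\Phiplus)$ and in particular sends any remaining copies of $\beta$ in that range to $-\beta \in zs(\Phiplus)$; positions outside the range are unchanged and lie in $z(\Phiplus)\setminus\{\beta\} \subseteq zs(\Phiplus)$ provided no such position carries root $\beta$. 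Uniqueness of $\kappa(zs)$ then forces the choice of $i$ to make all these conditions hold simultaneously, and pointedness of $\Roots{J}$ is automatic since $zs(\Phiplus)$ lies in an open half-space.

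The main obstacle is the bookkeeping in this second case when $\beta$ occurs in $\Roots{I}$ with multiplicity greater than one, since then several indices $i \in I$ have $\Root{I}{i} = \beta$ and one must single out the correct one. The cleanest resolution is to rely on the uniqueness of $\kappa(zs)$ already in hand rather than trying to characterize the correct flip index intrinsically: because $\kappa(zs)$ is unique and the construction above produces $\kappa(zs)$ for one admissible $i$, that admissible $i$ is unique, which is the uniqueness asserted in the statement.
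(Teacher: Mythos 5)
Your handling of the case $z(\alpha_s) \notin \Roots{\kappa(z)}$ is correct and coincides with the paper's argument, and your reduction of the other case to the observation that $zs(\Phiplus) = (z(\Phiplus) \setminus \{\beta\}) \cup \{-\beta\}$ together with $s_\beta(z(\Phiplus)) = zs(\Phiplus)$ is the right way to set things up. You are also right that $\beta \notin \Eplus{w}{\demazure{\Q}}$ and hence every index with root $\beta$ is flippable.

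However, the case $z(\alpha_s) \in \Roots{\kappa(z)}$ has a genuine gap. You correctly identify the obstacle — when $\beta$ occurs in $\Roots{I}$ with multiplicity, positions carrying $\beta$ that lie \emph{outside} the flip range $(\min(i,j),\max(i,j)]$ are not reflected by $s_\beta$, so they remain equal to $\beta \notin zs(\Phiplus)$ — but your resolution is circular. Uniqueness of $\kappa(zs)$ says there is \emph{at most one} facet with root configuration in $zs(\Phiplus)$; it cannot, by itself, produce an $i$ for which your construction succeeds. Phrased differently: the lemma's content is precisely that $\kappa(zs)$ is reachable from $\kappa(z)$ by a \emph{single} flip, and ``the construction above produces $\kappa(zs)$ for one admissible $i$'' asserts exactly that without argument. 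To make your combinatorial approach work, you would have to prove the missing structural fact: all indices $i_a \in I$ with $\Root{I}{i_a} = \beta$ lie on the same side of the common flip target $j$ (this follows from \Cref{lem:rootsandflips}\ref{eq:rootsbyflips}, since the sign of $\Root{I}{j}$ is determined by whether $i_a < j$ or $i_a > j$, and all flips from $I$ with root $\beta$ share the same $j$), and consequently the extremal such index — smallest if $\beta\in\Phiplus$, largest if $\beta\in\Phiminus$ — has a flip range covering all the others. The paper sidesteps this bookkeeping entirely by working in the brick polyhedron: it builds a linear functional $f$ vanishing exactly on $\pm\beta$ and positive on the rest of $z(\Phiplus)$, observes that the face $B_f$ is a polytopal edge with $\brickVector{\kappa(z)}$ as one vertex, and then invokes \Cref{cor:edgesareflips} to identify the other vertex as a facet obtained by a single flip. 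So your route is genuinely different from the paper's — direct root-configuration bookkeeping versus the polyhedral geometry — but as written the key existence step is unjustified.
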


\begin{proof}
  Set $\alpha = \alpha_s$ and $I = \kappa(z)$, and observe that the root $z(\alpha)$ generates a ray of $z(\cone(\Phiplus))$.
  Either $z(\alpha) \in \Phiminus$ or $z(\alpha) \in \inv(zs)$.
  Since $zs \in \idweak{w,\demazure{\Q}}$ we obtain $z(\alpha) \notin \Eplus{w}{\demazure{\Q}}$.

  \medskip

  Let $z(\alpha) \notin \Roots{I}$.
  Then $\Roots{I} \subseteq z(\Phiplus) \cap zs(\Phiplus)$ and we obtain $\kappa(sz) = I$.

  Let $z(\alpha) \in \Roots{I}$.
  Since $\Roots{I} \subseteq z(\Phiplus)$, $z(\alpha)$ generates a ray of the (pointed) cone over $\Roots{I}$.
  Therefore, there exists a linear functional~$f$ with $f(z(\alpha)) = 0$ and~$f$ positive on $z(\Phiplus) \setminus \{ z(\alpha) \}$ (and in particular on $\Roots{I}$).
  Since $z(\alpha) \notin \Eplus{w}{\demazure{\Q}}$, the face $B_f \subseteq \brickPolytope(\Q,w)$ is a polytopal edge.
  \Cref{cor:vertexpointedcone} and \Cref{prop:facesofbrickpolyarebrickpoly} ensure that $\brickVector{I}$ is a vertex of $B_f$.
  Let~$J$ be the facet for which $\brickVector{J}$ is the other vertex of the edge~$B_f$.
  \Cref{cor:edgesareflips} then shows that~$I$ and~$J$ are connected by a flip.
  If $z(\alpha) \in \Phiplus$, then~$J$ is obtained from~$I$ by flipping the smallest index $i \in I$ for which $\Root{I}{i} = z(\alpha)$, and if $z(\alpha) \in \Phiminus$, then~$J$ is obtained from~$I$ by flipping the largest index $i \in I$ for which $\Root{I}{i} = z(\alpha)$.
\end{proof}

\begin{lemma}
  \label{lem:rootconfunderkappa}
  Let $\brickVector{I}$ be a vertex of $\brickPolytope(\Q,w)$.
  Then 
  \[
    \cone(\Roots{I}) = \bigcap\limits_{z \in \kappa^{-1}(I)} z(\cone(\Phiplus) ).
  \]
\end{lemma}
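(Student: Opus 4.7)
The plan is to prove the two inclusions separately; the nontrivial direction uses hyperplane separation together with a perturbation argument. For the inclusion $\cone(\Roots{I}) \subseteq \bigcap_{z \in \kappa^{-1}(I)} z(\cone(\Phiplus))$, note that every $z \in \kappa^{-1}(I)$ satisfies $\Roots{I} \subseteq z(\Phiplus)$ by construction of $\kappa$, so passing to cones and intersecting over $z$ yields the claim immediately.

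For the reverse inclusion I will argue by contraposition: given $v \notin \cone(\Roots{I})$, I will produce some $z \in \kappa^{-1}(I)$ with $v \notin z(\cone(\Phiplus))$. Hyperplane separation from the closed convex cone $\cone(\Roots{I})$ supplies a linear functional $f : V \to \RR$ with $f(v) < 0$ and $f \geq 0$ on $\Roots{I}$. Since $\brickVector{I}$ is a vertex, \Cref{cor:vertexpointedcone} guarantees that $\cone(\Roots{I})$ is pointed, so its dual cone has nonempty interior and provides an auxiliary functional $g$ with $g > 0$ on all of $\Roots{I}$. I then pass to the perturbation $f' = f + \epsilon g$ for small generic $\epsilon > 0$: this preserves $f'(v) < 0$, strengthens the inequality to $f' > 0$ on $\Roots{I}$, and for all but finitely many $\epsilon$ ensures $f'(\beta) \neq 0$ for every $\beta \in \Phi$.

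The conclusion is then routine. By \Cref{thm:cone_equality}, $\Eplus{w}{\demazure{\Q}} \subseteq \cone(\Roots{I})$, so $f'$ is strictly positive on $\Eplus{w}{\demazure{\Q}}$ as well. Genericity of $f'$ determines a unique $z \in W$ with $z(\Phiplus) = \set{\beta \in \Phi}{f'(\beta) > 0}$; the two strict positivities translate to $\Eplus{w}{\demazure{\Q}} \subseteq z(\Phiplus)$ and $\Roots{I} \subseteq z(\Phiplus)$, placing $z$ inside $\idweak{w,\demazure{\Q}}$ and, by the uniqueness property defining $\kappa$, forcing $\kappa(z) = I$. On the other hand, $v \in z(\cone(\Phiplus))$ would make $v$ a nonnegative combination of roots on which $f'$ is strictly positive, forcing $f'(v) \geq 0$ and contradicting $f'(v) < 0$; hence $v \notin z(\cone(\Phiplus))$ as required.

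The main obstacle is the perturbation step: if $f$ merely satisfies $f \geq 0$ on $\Roots{I}$, the element $z$ cut out by $f$ may be ill-defined (if $f$ lies on a reflection hyperplane) or may fail to lie in $\idweak{w,\demazure{\Q}}$, in which case $\kappa(z)$ would not even be defined. Pointedness of $\cone(\Roots{I})$, equivalently the vertex hypothesis on $\brickVector{I}$, is precisely what produces the auxiliary functional $g$ and enables the perturbation to succeed while preserving all three required strict inequalities.
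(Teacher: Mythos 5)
Your proof is correct and takes a genuinely different route from the paper's. The paper argues combinatorially ray-by-ray: it takes a root $\beta$ generating a ray of $\bigcap_z z(\cone(\Phiplus))$, writes $\beta = u(\alpha)$ for a particular $u \in \kappa^{-1}(I)$ and $\alpha \in \Delta$, and case-splits on whether $us_\alpha \in \idweak{w,\demazure{\Q}}$, invoking \Cref{lem:weakcoversunderkappa} to identify $u(\alpha)$ as the direction of the flip from $\kappa(u)$ to $\kappa(us_\alpha)$. Your argument is instead a separation-and-perturbation argument that never needs to identify the rays of the intersection at all; this sidesteps the paper's implicit reliance on each ray of $\bigcap_z z(\cone(\Phiplus))$ being a root that is a ray of a single chamber $u(\cone(\Phiplus))$, so your route is arguably more self-contained. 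What the paper's combinatorial route buys is the finer picture of which roots bound which chambers, which feeds directly into the normal-fan description of \Cref{thm:brickpolynormalcone}. One small point to tighten in your perturbation step: if $f(\beta) = 0 = g(\beta)$ for some $\beta \in \Phi$, then $f'(\beta) = 0$ for \emph{every} $\epsilon$, so genericity of $\epsilon$ alone does not guarantee that $f'$ misses all reflection hyperplanes. This is harmless: since $\cone(\Roots{I})$ is pointed, the interior of its dual cone is open and full-dimensional, so you may choose $g$ there with $g(\beta) \neq 0$ for all $\beta \in \Phi$, after which the genericity-in-$\epsilon$ step applies verbatim.
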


\begin{proof}
  The definition of the map~$\kappa$ immediately implies
  \[
    \cone(\Roots{I}) \subseteq \bigcap\limits_{z \in \kappa^{-1}(I)} z( \cone(\Phiplus) ).
  \]
  For the other inclusion let $\beta \in \Phi$ be a root that generates a ray of $\cap_{z \in \kappa^{-1}(I)} z( \cone(\Phiplus) )$.
  Then~$\beta$ is also ray of $u(\cone(\Phiplus))$ for some particular $u \in \kappa^{-1}(I)$.
  Thus $\beta = u(\alpha)$ for some $\alpha \in \Delta$.

  If $us_\alpha \notin \idweak{w,\demazure{\Q}}$ then
  \[
    u(\alpha) \in \Eplus{w}{\demazure{\Q}} \subseteq \cone(\Roots{I}).
  \]
  Otherwise $\kappa(u) \neq \kappa(us_\alpha)$ and therefore by \Cref{lem:weakcoversunderkappa} we have $u(\alpha)$ is the direction of the flip from $\kappa(u)$ to $\kappa(us_\alpha)$, hence $u(\alpha) \in \Roots{I}$.
  This concludes the statement.
\end{proof}

We can now proof \Cref{thm:brickpolynormalcone} and \Cref{thm:coarseningcoxeter}.

\begin{proof}[Proof of \Cref{thm:brickpolynormalcone}]
  Let $I \in \subwordComplex(\Q,w)$ be a facet with pointed root configuration.
  We write in the following $\normalcone{\mathcal{X}}$ for a pointed cone~$\mathcal X$ to denote the normal cone at its apex.
  \Cref{cor:vertexpointedcone} yields
  \[
    \normalcone{\brickVector{I}} = \normalcone{\cone\Roots{I}}.
  \]
  
  It is well-known that the (closure of the) normal cone of the fundamental chamber~$\mathcal C$ is generated by the simple roots $\normalcone{\mathcal{C}} = \cone(\Delta) = \cone(\Phiplus)$.
  \Cref{lem:rootconfunderkappa} then gives
  \[
   \normalcone{\cone\Roots{I}} = \bignormalcone{\bigcap\limits_{v \in \kappa^{-1}(I)} v( \normalcone{\mathcal{C}} )} = \bigcup\limits_{v \in \kappa^{-1}(I)} \bignormalcone{ v( \normalcone{\mathcal{C}} ) } = \bigcup\limits_{v \in \kappa^{-1}(I)} v( \mathcal{C} ),
  \]
  where the latter two equalities are elementary transformations.
\end{proof}

\begin{proof}[Proof of \Cref{thm:coarseningcoxeter}]
  It follows from \Cref{thm:brickpolynormalcone} that the maximal cones of $\normalfan{\brickPolytope(\Q,w)}$ are given as in the corollary.
\end{proof}

\subsection{Containment properties of brick polyhedra for a fixed word}
\label{sec:brickpolycontainment}

The considerations of brick polyhedra also for non-spherical subword complexes makes it possible to discuss how the various brick polyhedra for a fixed word~$\Q$ are related.
In this section, we use the map $\iota : \subwordComplex(\Q,s_\beta w) \rightarrow \subwordComplex(\Q, w)$ from~\eqref{eq:iota}, to show how brick vectors change between the subword complexes $\subwordComplex(\Q,w)$ and $\subwordComplex(\Q,s_\beta w)$ for a cover $w \prec s_\beta w \leq \demazure{\Q}$ in Bruhat order.
Using \Cref{cor:weakordercplus}, we then obtain the following containment statement of brick polyhedra.

\begin{theorem}
  \label{thm:brickpolycontainment}
  Let $w \in W$ and $s \in \sref$ such that $w \prec ws \leq \demazure{\Q}$.
  Then $\brickPolytope(\Q,ws) \subseteq \brickPolytope(\Q,w)$.
\end{theorem}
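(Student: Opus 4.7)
My plan is to verify that both ingredients of the Minkowski sum defining $\brickPolytope(\Q,ws)$ land inside $\brickPolytope(\Q,w)$, namely the recession cone $\Cplus{ws}{\demazure{\Q}}$ and every brick vector $\brickVector{J}$ of a facet $J \in \subwordComplex(\Q,ws)$. For the recession cone, I would apply \Cref{cor:weakordercplus} to $x = ws$ and $y = \demazure{\Q}$: the hypothesis $xs = w \prec x \leq y$ holds by assumption, and so $\Cplus{ws}{\demazure{\Q}} \subseteq \Cplus{w}{\tau}$, where $\tau$ is the Bruhat-smaller element of $\{\demazure{\Q}s,\demazure{\Q}\}$. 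In either case $\tau \leq \demazure{\Q}$, so every $\beta \in \Eplus{w}{\tau}$ satisfies $w \prec s_\beta w \leq \tau \leq \demazure{\Q}$, hence $\Eplus{w}{\tau} \subseteq \Eplus{w}{\demazure{\Q}}$ and $\Cplus{w}{\tau} \subseteq \Cplus{w}{\demazure{\Q}}$, which settles the recession cone containment.

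For the brick vectors, fix $J \in \subwordComplex(\Q,ws)$ and let $I = \iota(J) = J \cup \{k\} \in \subwordComplex(\Q,w)$ via the map \eqref{eq:iota}. Set $\beta = w(\alpha_s) \in \Eplus{w}{\demazure{\Q}}$, so that $ws = s_\beta w$, and write $u := \wordprod{\Q}{\{1,\dots,k-1\} \setminus J}$, $v_j := \wordprod{\Q}{\{k+1,\dots,j-1\} \setminus J}$; by \Cref{lem:techlem} one then has $\Root{J}{k} = u(\alpha_{s_k}) = \beta$ and $s_\beta = u s_k u^{-1}$. A term-by-term comparison of weight functions shows $\Weight{J}{j} = \Weight{I}{j}$ for $j \leq k$, whereas for $j > k$ factoring out position $k$ gives $\Weight{J}{j} = u s_k v_j(\omega_{s_j}) = s_\beta \Weight{I}{j}$ and $\Weight{I}{j} = u v_j(\omega_{s_j})$. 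Using the reflection identity $\Weight{I}{j} - s_\beta \Weight{I}{j} = \dotprod{\beta^\vee}{\Weight{I}{j}}\,\beta$ together with $\dotprod{\beta^\vee}{\Weight{I}{j}} = -\dotprod{\beta^\vee}{\Weight{J}{j}}$, summing over all $j$ yields
\[
    \brickVector{J} - \brickVector{I} \ = \ \sum_{j > k} \bigl( \Weight{I}{j} - \Weight{J}{j} \bigr) \ = \ \mu\,\beta, \qquad \mu = -\sum_{j>k} \dotprod{\beta^\vee}{\Weight{J}{j}}.
\]

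The main obstacle is to show $\mu \geq 0$. To do so I would apply \Cref{lem:weightssandflips}\ref{eq:weightsbyflips3} to the facet $J$ at the position $k \notin J$: since $\Root{J}{k} = \beta$, each inner product $\dotprod{\beta}{\Weight{J}{j}}$ with $j > k$ is non-positive, and because $\beta^\vee$ is a positive multiple of $\beta$, the same sign holds for the coroot, giving $\mu \geq 0$. Combining everything, $\brickVector{J} \in \brickVector{I} + \RRpos\beta \subseteq \brickVector{I} + \Cplus{w}{\demazure{\Q}} \subseteq \brickPolytope(\Q,w)$, which together with the recession cone containment proves $\brickPolytope(\Q,ws) \subseteq \brickPolytope(\Q,w)$. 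The heart of the argument is thus purely local at each brick vector, reducing to the inner product estimates in \Cref{lem:weightssandflips} together with the weak order containment in \Cref{cor:weakordercplus} for the asymptotic direction.
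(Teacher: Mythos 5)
Your proposal is correct and follows essentially the same route as the paper: the paper isolates the brick-vector containment as Proposition~\ref{prop:brickrelation} (proved via the same $s_\beta$-shift of the weight function and the sign estimate from Lemma~\ref{lem:weightssandflips}\ref{eq:weightsbyflips3}), and then invokes Corollary~\ref{cor:weakordercplus} for the recession cone exactly as you do.
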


We first show the following result for brick vectors and extract the key ingredient into a lemma.

\begin{proposition}
\label{prop:brickrelation}
  Let $w \prec s_\beta w \leq \demazure{\Q}$ and $I \in \subwordComplex(\Q, s_\beta w)$ be a facet.
  Then $\brickVector{I} \in \brickPolytope(\Q,w)$.
\end{proposition}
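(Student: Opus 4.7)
The plan is to compare $\brickVector{I}$ with the brick vector of $J := \iota(I) \in \subwordComplex(\Q, w)$, where $\iota$ is the map from~\eqref{eq:iota}, and to show that $\brickVector{I} - \brickVector{J}$ is a non-negative multiple of $\beta$. Since $w \prec s_\beta w \leq \demazure{\Q}$ places $\beta$ in $\Eplus{w}{\demazure{\Q}} \subseteq \Cplus{w}{\demazure{\Q}}$, this will exhibit $\brickVector{I} \in \brickVector{J} + \Cplus{w}{\demazure{\Q}} \subseteq \brickPolytope(\Q, w)$ directly from the Minkowski sum definition.

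For the comparison, write $J = I \cup \{k_0\}$ with $\Root{J}{k_0} = \beta$. Since $\Weight{I}{k}$ depends only on $I \cap \{1, \dots, k-1\}$, one has $\Weight{I}{k} = \Weight{J}{k}$ for $k \leq k_0$. For $k > k_0$, decompose $\wordprod{\Q}{\{1, \dots, k-1\} \setminus I} = a \cdot s_{k_0} \cdot b$ with $a = \wordprod{\Q}{\{1, \dots, k_0 - 1\} \setminus I}$ and $b = \wordprod{\Q}{\{k_0 + 1, \dots, k-1\} \setminus I}$, so that $\wordprod{\Q}{\{1, \dots, k-1\} \setminus J} = a \cdot b$. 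From $a(\alpha_{s_{k_0}}) = \Root{J}{k_0} = \beta$, the conjugation identity $a\, s_{k_0} \,a^{-1} = s_\beta$ yields the key relation
\[
  \Weight{I}{k} = s_\beta \big( \Weight{J}{k} \big) \qquad \text{for all } k > k_0,
\]
and summing gives
\[
  \brickVector{I} - \brickVector{J} \;=\; \sum_{k > k_0} \big( \Weight{J}{k} - s_\beta(\Weight{J}{k}) \big) \;=\; \Bigg( \sum_{k > k_0} \frac{2 \langle \beta, \Weight{J}{k} \rangle}{\langle \beta, \beta \rangle} \Bigg) \beta.
\]

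To verify that the scalar coefficient is non-negative, I would apply \Cref{lem:weightssandflips}\ref{eq:weightsbyflips3} to the facet $I \in \subwordComplex(\Q, s_\beta w)$ at the position $j = k_0 \notin I$: since $\Root{I}{k_0} = \beta$, this provides $\langle \beta, \Weight{I}{k} \rangle \leq 0$ for all $k > k_0$. Combined with $\Weight{J}{k} = s_\beta(\Weight{I}{k})$ and $\langle \beta, s_\beta(v) \rangle = - \langle \beta, v \rangle$, this gives $\langle \beta, \Weight{J}{k} \rangle \geq 0$ for all $k > k_0$, so the coefficient is non-negative as required. The main obstacle is establishing the conjugation identity $\Weight{I}{k} = s_\beta(\Weight{J}{k})$ cleanly, which is presumably the ingredient the authors foreshadow extracting into a separate lemma; once available, the sign verification and the Minkowski sum conclusion are both routine.
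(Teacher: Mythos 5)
Your proposal is correct and matches the paper's argument in all essentials: the paper also compares $\brickVector{I}$ with $\brickVector{\iota(I)}$, establishes the same weight transformation $\Weight{\iota(I)}{k} = s_\beta(\Weight{I}{k})$ for $k > j$, invokes \Cref{lem:weightssandflips}\ref{eq:weightsbyflips3} at the non-facet position $j$ to get the sign $\langle\beta,\Weight{I}{k}\rangle \leq 0$, and then concludes via the Minkowski-sum definition of $\brickPolytope(\Q,w)$. The only cosmetic difference is that the paper extracts the statement $\brickVector{I} \in \brickVector{\iota(I)} + \RR_+(\beta)$ as a separate lemma and asserts the weight transformation formula by analogy with \Cref{lem:weightssandflips}\ref{eq:weightsbyflips2}, whereas you derive it directly via the $a\cdot s_{k_0}\cdot b$ decomposition and the conjugation $a s_{k_0} a^{-1} = s_\beta$ — so your final remark that this identity remains an obstacle is overly cautious, since you have in fact already proved it.
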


\begin{lemma}
  In the situation of the proposition, we have $\brickVector{I} \in \brickVector{ \iota(I) } + \RR_+(\beta)$.
\end{lemma}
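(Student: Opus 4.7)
The plan is to compare the weight functions of $I$ and $\iota(I)$ term-by-term and show the resulting difference in brick vectors is a non-negative scalar multiple of $\beta$. Write $\iota(I) = I \cup \{k\}$ where $k \notin I$ is the distinguished index from the definition of $\iota$ in~\eqref{eq:iota}. For indices $j \leq k$ the truncations $\{1,\dots,j-1\} \setminus I$ and $\{1,\dots,j-1\} \setminus \iota(I)$ coincide, so the weights agree: $\Weight{I}{j} = \Weight{\iota(I)}{j}$, and these terms do not contribute to the difference.

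For $j > k$, I set $u = \wordprod{\Q}{\{1,\dots,k-1\} \setminus I}$ and $\pi_j = \wordprod{\Q}{\{k+1,\dots,j-1\} \setminus I}$, so that
\[
  \Weight{I}{j} = u\,s_k\,\pi_j(\omega_{s_j}) \quad\text{while}\quad \Weight{\iota(I)}{j} = u\,\pi_j(\omega_{s_j}).
\]
Since $(1-s_k)(v) \in \RR\,\alpha_{s_k}$ for every $v \in V$, and since $u(\alpha_{s_k}) = \Root{\iota(I)}{k} = \beta$ by the identification made in the proof of \Cref{lem:coversandnonflipables}\ref{it:coversandnonflipables-b}, the difference $u(1-s_k)\pi_j(\omega_{s_j})$ equals $c_j\,\beta$ for a scalar $c_j$ which is a positive multiple of $\dotprod{\alpha_{s_k}}{\pi_j(\omega_{s_j})}$. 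Summing over $j > k$ and using $\brickVector{I} = -\sum_j \Weight{I}{j}$ yields
\[
  \brickVector{I} - \brickVector{\iota(I)} = \Big(\sum_{j > k} c_j\Big)\,\beta.
\]

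It remains to show $c_j \geq 0$ for every $j > k$. The key observation is that $k \notin I$ as well, so $\Root{I}{k}$ is defined in the subword complex $\subwordComplex(\Q, s_\beta w)$ and equals $u(\alpha_{s_k}) = \beta$. Applying \Cref{lem:weightssandflips}\ref{eq:weightsbyflips3} to the facet $I$ at the non-facet position $k$ together with the fact $k < j$ gives $\dotprod{\Root{I}{k}}{\Weight{I}{j}} \leq 0$, i.e., $\dotprod{\beta}{\Weight{I}{j}} \leq 0$. Using the isometry of $u$ and the identity $\dotprod{\alpha_{s_k}}{s_k\,v} = -\dotprod{\alpha_{s_k}}{v}$, this inequality transforms into $\dotprod{\alpha_{s_k}}{\pi_j(\omega_{s_j})} \geq 0$, hence $c_j \geq 0$.

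The main obstacle I anticipate is purely the sign bookkeeping: the minus sign in the definition of $\brickVector{I}$, the direction of the inequality in \Cref{lem:weightssandflips}\ref{eq:weightsbyflips3} (non-positive precisely when the root-index is strictly below the weight-index), and the sign flip produced by conjugating the inner product with $s_k$. Once these are carefully aligned, the argument is local to positions $> k$ and requires no induction on the length of $\Q$, relying only on the identification $u(\alpha_{s_k}) = \beta$ together with the inner product inequality of \Cref{lem:weightssandflips}\ref{eq:weightsbyflips3}.
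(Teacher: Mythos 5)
Your proof is correct and follows essentially the same route as the paper: you both split the weight-function sum at the distinguished index, observe that weights agree up to that index, show that for indices beyond it each difference of weights lies in $\RR\beta$, and then invoke \Cref{lem:weightssandflips}\ref{eq:weightsbyflips3} at the non-facet position to pin down the sign. The only cosmetic difference is that the paper phrases the weight transformation via conjugation by $s_\beta$ (noting $\Weight{\iota(I)}{k} = s_\beta(\Weight{I}{k})$ for $k$ past the split), whereas you unwind it explicitly through $u$, $s_k$, and $\pi_j$; the two are algebraically identical.
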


\begin{proof}
  The facet $\iota(I)$ is the set $I \cup \{j\}$ where~$j$ is the unique position in the complement of~$I$, such that $\Root{I}{j} = \beta$ in $\subwordComplex(\Q, s_\beta w)$.
  Similar to \Cref{lem:weightssandflips}\ref{eq:weightsbyflips2}, the weight function $\Weight{\iota(I)}{\cdot}$ is obtained from the weight function $\Weight{I}{\cdot}$ by
  \[
    \Weight{\iota(I)}{k} = \begin{cases}
                                 \Weight{I}{k}            & \text{ if } k \leq j, \\
                                 s_\beta( \Weight{I}{k} ) & \text{ if } k > j.
                               \end{cases}
  \]
  Let $k > j$.
  Since $\beta = \Root{I}{j}$ and $j \notin I$, \Cref{lem:weightssandflips}\ref{eq:weightsbyflips3} yields that $\langle \beta, \Weight{I}{k} \rangle$ is non-positive.
  Therefore, $s_\beta( \Weight{I}{k} ) \in \Weight{I}{k} + \RRpos(\beta)$, and we obtain
  $\brickVector{I} = \brickVector{\iota(I)} + \RRpos(\beta)$.
\end{proof}

\begin{proof}[Proof of \Cref{prop:brickrelation}]
  The previous lemma shows that $\brickVector{I} \in \brickVector{ \iota(I) } + \RR_+(\beta)$.
  Since $\beta \in \Cplus{w}{\demazure{\Q}}$, the statement follows.
\end{proof}

\begin{proof}[Proof of \Cref{thm:brickpolycontainment}]
  This follows from \Cref{prop:brickrelation} together with the containment of Bruhat cones described in \Cref{cor:weakordercplus}.
\end{proof}

The following example shows the nested situation of brick polyhedra for the permutahedron in type~$A_2$.

\begin{example}[Type $A_2$]
  Let $\Q = 112211$.
  We show all brick polyhedra where brick vectors of different polyhedra that have the same coordinates are drawn close to each other.
  The brick polyhedra $\subwordComplex(\Q,w)$ are in black for $w = 121$, in red for $w = 12$, in blue for $w = 21$, in orange for $w = 1$, in lightblue for $w = 2$ and in grey for $w = e$.
  \[
    \begin{tikzpicture}[scale=1]
      \node (121a) at (-2.598,1.5) {$\bullet$};
      \node (121b) at (-1.299,2.25) {$\bullet$};
      \node (121c) at (-1.299,3.75) {$\bullet$};
      \node (121d) at (-2.598,4.5) {$\bullet$};
      \node (121e) at (-3.897,3.75) {$\bullet$};
      \node (121f) at (-3.897,2.25) {$\bullet$};
      \node (121g) at (-2.598,3) {$\bullet$};
      
      \draw (-2.598,1.5) -- (-1.299,2.25) -- (-1.299,3.75) -- (-2.598,4.5) -- (-3.897,3.75) -- (-3.897,2.25) -- (-2.598,1.5);
      \draw[fill=black, opacity=0.05] (-2.598,1.5) -- (-1.299,2.25) -- (-1.299,3.75) -- (-2.598,4.5) -- (-3.897,3.75) -- (-3.897,2.25) -- (-2.598,1.5);
      
      \node[red] (12a) at (-2.598,1.35) {$\bullet$};
      \node[red] (12b) at (-3.997,3.85) {$\bullet$};
      \node[red] (12c) at (-3.997,2.15) {$\bullet$};
      \node[red] (12d) at (-2.598,2.85) {$\bullet$};
      
      \draw[red] (-3.997,3.85) -- (-3.997,2.15) -- (-2.598,1.35);
      \draw[red, -{>[scale=1.2]}] (-3.997,3.85) -- (0, 6.1);
      \draw[red, -{>[scale=1.2]}] (-2.598,1.35) -- (3.897, 5.1);
      \draw[fill=red, opacity=0.05] (3.897, 5.1) -- (-2.598,1.35) -- (-3.997,2.15) -- (-3.997,3.85) -- (0.15, 6.2) -- (3.897,6.2);
      
      \node[blue] (21a) at (0,-0.25) {$\bullet$};
      \node[blue] (21b) at (1.299,0.6) {$\bullet$};
      \node[blue] (21c) at (0,1.5) {$\bullet$};
      \node[blue] (21d) at (1.299,2.35) {$\bullet$};
      
      \draw[blue] (0,-0.25) -- (1.299,0.6) -- (1.299,2.35);
      \draw[blue, -{>[scale=1.2]}] (1.299,2.35) -- (-5.196, 6.1);
      \draw[blue, -{>[scale=1.2]}] (0,-0.25) -- (-5.196, 2.75);
      \draw[fill=blue, opacity=0.05] (-5.196, 2.75) -- (0,-0.25) -- (1.299,0.6) -- (1.299,2.35) -- (-5.196, 6.1);
      
      \node[orange] (1a) at (0,-0.4) {$\bullet$};
      \node[orange] (1b) at (-1.299,0.35) {$\bullet$};
      \node[orange] (1c) at (-2.598,1.1) {$\bullet$};
      \node[orange] (1d) at (-4.1,1.98) {$\bullet$};
      
      \draw[orange] (0,-0.4) -- (-1.299,0.35) -- (-2.598,1.1) -- (-4.1,1.98);
      \draw[orange, -{>[scale=1.2]}] (0,-0.4) -- (3.897,2.15);
      \draw[orange, -{>[scale=1.2]}] (-4.1,1.98) -- (-4.1,6.1);
      \draw[fill=orange, opacity=0.05] (3.897,2.15) -- (0,-0.4) -- (-1.299,0.35) -- (-2.598,1.1) -- (-4.1,1.98) -- (-4.1,6.2) -- (3.897,6.2);
      
      \node[lightblue] (2a) at (0,-0.55) {$\bullet$};
      \node[lightblue] (2b) at (1.399,0.35) {$\bullet$};
      
      \draw[lightblue] (0,-0.55) -- (1.399,0.35);
      \draw[lightblue, -{>[scale=1.2]}] (1.399,0.35) -- (1.399,6.1);
      \draw[lightblue, -{>[scale=1.2]}] (0,-0.55) -- (-5.196,2.45);
      \draw[fill=lightblue, opacity=0.05] (-5.196,2.45) -- (0,-0.55) -- (1.399,0.35) -- (1.399,6.2) -- (-5.196, 6.2);
      
      \node[lightgrey] (ea) at (0,-0.7) {$\bullet$};
      
      \draw[lightgrey, -{>[scale=1.2]}] (0,-0.7) -- (-5.196,2.3);
      \draw[lightgrey, -{>[scale=1.2]}] (0,-0.7) -- (3.897,1.85);
      \draw[fill=lightgrey, opacity=0.1] (-5.196,2.3) -- (0,-0.7) -- (3.897,1.85) -- (3.897,6.2) -- (-5.196, 6.2);
      
      \draw (-5.196, 6.2) -- (3.897,6.2) -- (3.897,-0.8) -- (-5.196,-0.8) -- (-5.196, 6.2);
    \end{tikzpicture}
  \]
\end{example}

\bibliographystyle{alpha}
\bibliography{bpordsc}

\end{document}